\documentclass[12pt]{amsart}
\usepackage{amsmath,amsthm,amsfonts,amscd,amssymb,eucal,latexsym,mathrsfs, appendix, yhmath, setspace}
\usepackage[numbers,sort&compress]{natbib}
\usepackage[all,cmtip]{xy}
\usepackage{enumerate}

\setlength{\textwidth}{15cm}
\setlength{\oddsidemargin}{4mm}
\setlength{\evensidemargin}{4mm}

\newtheorem{theorem}{Theorem}[section]
\newtheorem{corollary}[theorem]{Corollary}
\newtheorem{lemma}[theorem]{Lemma}
\newtheorem{proposition}[theorem]{Proposition}

\theoremstyle{definition}
\newtheorem{definition}[theorem]{Definition}
\newtheorem{remark}[theorem]{Remark}

\newtheorem{example}[theorem]{Example}

\newcommand{\im}{{\rm im}}

\newcommand{\id}{{\rm id}}

\newcommand{\cF}{{\mathcal F}}

\newcommand{\cL}{{\mathcal L}}
\newcommand{\cM}{{\mathcal M}}
\newcommand{\cN}{{\mathcal N}}

\newcommand{\cV}{{\mathcal V}}
\newcommand{\cW}{{\mathcal W}}

\newcommand{\Cb}{{\mathbb C}}

\newcommand{\Fb}{{\mathbb F}}

\newcommand{\Nb}{{\mathbb N}}

\newcommand{\Pb}{{\mathbb P}}

\newcommand{\Rb}{{\mathbb R}}

\newcommand{\Zb}{{\mathbb Z}}
\newcommand{\sA}{{\mathscr A}}
\newcommand{\sB}{{\mathscr B}}

\newcommand{\sF}{{\mathscr F}}

\newcommand{\sM}{{\mathscr M}}

\newcommand{\rL}{{\rm L}}

\newcommand{\End}{{\rm End}}

\newcommand{\Hom}{{\rm Hom}}

\newcommand{\rk}{{\rm rk}}

\newcommand{\coker}{{\rm coker}}
\newcommand{\dom}{{\rm Dom}}
\newcommand{\cod}{{\rm Cod}}

\allowdisplaybreaks

\begin{document}

\title{Bivariant and extended Sylvester rank functions}

\author{Hanfeng Li}

\address{\hskip-\parindent
Center of Mathematics, Chongqing University,
Chongqing 401331, China}
\address{\hskip-\parindent Department of Mathematics, SUNY at Buffalo,
Buffalo, NY 14260-2900, USA}
\email{hfli@math.buffalo.edu}

\date{July 16, 2020}

\subjclass[2010]{16D10, 16S85}
\keywords{Sylvester rank function, length function, additivity, epimorphism, universal localization}

\begin{abstract}
For a unital ring $R$, a Sylvester rank function is a numerical invariant which can be described in three equivalent ways: on finitely presented left $R$-modules, or on rectangular matrices over $R$, or on maps between finitely generated projective left $R$-modules. We extend each   Sylvester rank function to all pairs of left $R$-modules $\cM_1\subseteq \cM_2$, and to all maps between left $R$-modules satisfying suitable properties including continuity and additivity.

As an application, we show that for any epimorphism $R\rightarrow S$ of unital rings, the pull-back map from the set of Sylvester rank functions of $S$ to that of $R$ is injective. We also give a new proof of Schofield's result describing the image of this map when $S$ is the universal localization of $R$ inverting a set of maps between finitely generated projective left $R$-modules.
\end{abstract}

\maketitle


\section{Introduction} \label{S-introduction}

For a unital ring $R$, a Sylvester rank function for $R$ is a numerical invariant describing the size of finitely presented left $R$-modules. It can be defined in three equivalent ways, all taking values in $\Rb_{\ge 0}$, on either finitely presented left $R$-modules, or rectangular matrices over $R$, or maps between finitely generated projective left $R$-modules (see Section~\ref{S-rank} for the definitions). It was introduced first by Malcolmson in \cite{Malcolmson80} in the first two approaches, and then by Schofield in \cite{Schofield} in the third approach.

Sylvester rank functions arise in many different fields. For a unital $C^*$-algebra $R$, given a tracial state $\tau$ for $R$, one can extend $\tau$ to $M_n(R)$ for all $n\in \Nb$ by setting $\tau(A)=\sum_{j=1}^n\tau(A_{jj})$ for $A\in M_n(R)$, and then define $\rk_\tau(B)=\lim_{k\to \infty}\tau(|B|^{1/k})$ for all $B\in M_{n, m}(R)$. The function $\rk_\tau$ is a Sylvester rank function defined on rectangular matrices over $R$. This rank function is widely studied in Elliott's classification program for simple nuclear $C^*$-algebras, and is fundamental in the definition of strict comparison property and hence in the formulation of the Toms-Winter conjecture \cite{Cuntz, BH, BPT, Winter}.

For a discrete group $\Gamma$, if we take $R$ to be the group von Neumann algebra $\cL\Gamma$, which consists of bounded linear operators on $\ell^2(\Gamma)$ commuting with the right regular representation of $\Gamma$, and take $\tau$ to be the canonical trace given by $\tau(a)=\left< a\delta_{e_\Gamma}, \delta_{e_\Gamma}\right>$, where $\delta_{e_\Gamma}$ is the unit vector in $\ell^2(\Gamma)$ taking value $1$ at the identity element $e_\Gamma$ and $0$ everywhere else, then we obtain the von Neumann rank function on $\cL\Gamma$. This rank function and its restriction on the group algebra $\Cb\Gamma$ play a fundamental role in the definition of $L^2$-Betti numbers \cite{Luck02}.

Systematic study of Sylvester rank functions has also proved useful \cite{AC, Elek16, Elek17, Goodearl}. Such study is vital in recent work of Jaikin-Zapirain on the Atiyah conjecture and the L\"{u}ck approximation conjecture \cite{JZ19}. The classical result of Cohn on epimorphisms of $R$ into division rings \cite{Cohn71} can be stated as that the isomorphism classes of such homomorphisms are in natural $1$-$1$ correspondence with $\Zb_{\ge 0}$-valued Sylvester rank functions on $R$ \cite{Malcolmson80}. This was extended by Schofield to that the equivalence classes of homomorphisms from an algebra $R$ over a field to simple artinian rings, where two such maps are equivalent if the codomains map into a common simple artinian ring $S$ such that the two composition maps from $R$ to $S$ coincide,   are in natural $1$-$1$ correspondence with Sylvester rank functions on $R$ taking value in $\frac{1}{n}\Zb_{\ge 0}$ for some $n\in \Nb$ \cite[Theorem 7.12]{Schofield}.

Sylvester rank functions have been used in the study of direct finiteness. Ara et al. observed in \cite{AOP} that if $R$ has a Sylvester rank function which is faithful in the sense that every nonzero element of $R$ has positive rank, then $R$ is directly finite in the sense that every one-sided invertible element of $R$ is two-sided invertible. They used this observation to show that the group ring $D\Gamma$ is directly finite for any division ring $D$ and any free-by-amenable group $\Gamma$, which is later extended by Elek and Szab\'{o} to all sofic groups \cite{ES04}.

The set of all Sylvester rank functions on $R$ is naturally a compact convex set in a locally convex topological vector space.

Despite the importance of Sylvester rank functions and the nice structure of the set of Sylvester rank functions, in general a Sylvester rank function could have two draw-backs. The first is that it is only defined for finitely presented left $R$-modules or maps between finitely generated projective left $R$-modules. Frequently, we would like it to be defined for all left $R$-modules or maps between all left $R$-modules with suitable properties. The second is that as a measurement of the size of a module, one desirable property for a Sylvester rank function is the additivity, i.e.
for any short exact sequence
$$ 0\rightarrow \cM_1\rightarrow \cM_2\rightarrow \cM_3\rightarrow 0$$
of left $R$-modules, we would like to have that $\dim(\cM_2)=\dim(\cM_1)+\dim(\cM_3)$ if $\dim(\cM_j)$ for $j=1, 2, 3$ are all defined. However, though $\Zb\Gamma$ for every discrete group $\Gamma$ has the restriction of the von Neumann rank, whenever $\Gamma$ is nonamenable, there is a short exact sequence of finitely presented left $\Zb\Gamma$-modules such that the above additivity fails for every Sylvester rank function of $\Zb\Gamma$ (see Example~\ref{E-nonamenable}).

The purpose of this article is to handle these two draw-backs. Given any Sylvester rank function for $R$, we show how to extend it to an invariant for all pairs $(\cM_1, \cM_2)$ of left $R$-modules such that $\cM_1$ is a submodule of $\cM_2$ (Definition~\ref{D-bivariant} and Theorem~\ref{T-extension}). When $\cM_1=\cM_2$ is a finitely presented left $R$-module, we obtain the original Sylvester rank function. This bivariant Sylvester rank function has two desired properties: continuity and additivity (Definition~\ref{D-bivariant} and Theorem~\ref{T-additivity}). Furthermore, the extension is unique.

The bivariant Sylvester rank function can also be described equivalently as an invariant for all maps between left $R$-modules (Definition~\ref{D-map} and Theorem~\ref{T-mod vs map}). The extended Sylvester rank function on all maps also enjoys  continuity and additivity (Definition~\ref{D-map}). However, the full power of additivity is best exhibited at the module level (Theorem~\ref{T-additivity}).

As applications, we apply our construction to study the behaviour of Sylvester rank functions under epimorphisms. Given a unital ring homomorphism $\pi: R\rightarrow S$, one has a natural continuous affine map $\pi^*$ from the space of Sylvester rank functions on $S$ to that on $R$. A natural question is when $\pi^*$ is surjective or injective. We show that if $\pi$ is an epimorphism, then $\pi^*$ is injective (Theorem~\ref{T-epic to injective}). This extends a result of Jaikin-Zapirain in the case $S$ is von Neumann regular. We also describe the image of $\pi^*$ (Theorem~\ref{T-range}). A special case of epimorphism is the map of $R$ to the universal localization ring $R_\Sigma$, where $\Sigma$ is a set of maps between finitely generated projective left $R$-modules. In this case
we give a new proof of the classical result of Schofield describing the image of $\pi^*$ in terms of the rank of elements in $\Sigma$ (Theorem~\ref{T-localization rank}).

This article is organized as follows. In Section~\ref{S-rank} we recall the definitions of Sylvester rank functions. In Section~\ref{S-bivariant} we define the bivariant Sylvester
module rank function, and show that each Sylvester module rank function extends uniquely to a bivariant one. The full additivity of the bivariant Sylvester
module rank function is also established in this section. Section~\ref{S-length} is devoted to discussing when a bivariant Sylvester module rank function is in fact a length function.
The continuity of a bivariant Sylvester module rank function under direct limits is proved in Section~\ref{S-limit}. In Section~\ref{S-map} we define the extended Sylvester map rank function and show that they are in natural $1$-$1$ correspondence with the bivariant Sylvester module rank functions. We also derive various properties of the extended Sylvester map rank functions from those of the bivariant Sylvester module rank functions. Section~\ref{S-induce} is devoted to the study of how an $S$-$R$-bimodule can be used to induce an extended Sylvester map rank function for $R$ from one for $S$. The applications to epimorphisms are given in Section~\ref{S-epic}.

Throughout this article, all modules will be left modules unless specified. All maps between modules will be module homomorphisms. For any module $\cM$, we denote by $\id_\cM$ the identity map of $\cM$. For a map $\alpha:\cM_1\rightarrow \cM_2$ between $R$-modules and an $x\in \cM_1$, we shall write $(x)\alpha$ instead of $\alpha(x)$ for the image of $x$.

\noindent{\it Acknowledgments.}
This work is partially supported by NSF and NSFC grants. It started while I attended the program on $L^2$-invariants at ICMAT in Spring 2018. I am  grateful to Andrei Jaikin-Zapirain for inspiring discussions, especially for suggesting that the bivariant Sylvester module rank function might be used to give a new proof of Schofield's Theorem~\ref{T-localization rank}. I would also like to thank the anonymous referee for very useful comments.

\section{Sylvester Rank Functions} \label{S-rank}

Let $R$ be a unital ring. We recall the definitions and basic facts about Sylvester rank functions for $R$.

\begin{definition} \label{D-module}
A {\it Sylvester module rank function} for $R$ is an $\Rb_{\ge 0}$-valued function $\dim$ on the class of all finitely presented $R$-modules such that
\begin{enumerate}
\item $\dim(0)=0$, $\dim(R)=1$.
\item $\dim(\cM_1\oplus \cM_2)=\dim(\cM_1)+\dim(\cM_2)$.
\item For any exact sequence $\cM_1\rightarrow \cM_2\rightarrow \cM_3\rightarrow 0$, one has
$$\dim(\cM_3)\le \dim(\cM_2)\le \dim(\cM_1)+\dim(\cM_3).$$
\end{enumerate}
\end{definition}
From (3) it is clear that $\dim$ is an isomorphism invariant.

\begin{definition} \label{D-matrix}
A {\it Sylvester matrix rank function} for $R$ is an $\Rb_{\ge 0}$-valued function $\rk$ on the set of all rectangular matrices over $R$ such that
\begin{enumerate}
\item $\rk(0)=0$, $\rk(1)=1$.
\item $\rk(AB)\le \min(\rk(A), \rk(B))$.
\item $\rk(\left[\begin{matrix} A & \\ & B \end{matrix}\right])=\rk(A)+\rk(B)$.
\item $\rk(\left[\begin{matrix} A & C\\ & B \end{matrix}\right])\ge \rk(A)+\rk(B)$.
\end{enumerate}
\end{definition}

The notions of Sylvester module rank functions and Sylvester matrix rank functions were introduced by Malcolmson \cite{Malcolmson80}.

\begin{definition} \label{D-map rank}
A {\it Sylvester map rank function} for $R$ is an $\Rb_{\ge 0}$-valued function $\rk$ on the class of all maps between finitely generated projective $R$-modules
such that
\begin{enumerate}
\item $\rk(0)=0$, $\rk(\id_R)=1$.
\item $\rk(\alpha \beta)\le \min(\rk(\alpha), \rk(\beta))$.
\item $\rk(\left[\begin{matrix} \alpha & \\ & \beta \end{matrix}\right])=\rk(\alpha)+\rk(\beta)$.
\item $\rk(\left[\begin{matrix} \alpha & \gamma\\ & \beta \end{matrix}\right])\ge \rk(\alpha)+\rk(\beta)$.
\end{enumerate}
\end{definition}

The notion of Sylvester map rank functions was introduced by Schofield \cite[page 97]{Schofield}.

\begin{theorem} \label{T-dim vs rank}
There is a natural one-to-one correspondence between Sylvester module rank functions, Sylvester map rank functions, and Sylvester matrix rank functions as follows:
\begin{enumerate}
\item Given a Sylvester module rank function $\dim$, for any map $\alpha:P\rightarrow Q$ between finitely generated projective  $R$-modules $P$ and $Q$,  define $\rk(\alpha)=\dim(Q)-\dim(\coker(\alpha))$. Then $\rk$ is a Sylvester map rank function.

\item Given a Sylvester map rank function $\rk$, for any $A\in M_{n, m}(R)$, consider the map $\alpha_A: R^n\rightarrow R^m$ sending $x$ to $xA$, and define $\rk'(A)=\rk(\alpha_A)$. Then $\rk'$ is a Sylvester matrix rank function.

\item Given a Sylvester matrix rank function $\rk$, for any finitely presented $R$-module $\cM$ take some $A\in M_{n, m}(R)$ such that $\cM\cong R^m/R^nA$, and define $\dim(\cM)=m-\rk(A)$. Then $\dim$ is a Sylvester module rank function.
\end{enumerate}
\end{theorem}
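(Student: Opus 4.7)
The plan is to verify that each construction is well-defined and produces a function of the claimed type, then to check that the three maps are mutually inverse. The mutual inversion reduces to one-line computations: for example, composing $\dim\mapsto\rk\mapsto\rk'\mapsto\dim'$ yields, for any presentation $R^n\xrightarrow{A}R^m\to\cM\to 0$ of a finitely presented $\cM$, the equality $\dim'(\cM)=m-\rk(\alpha_A)=m-(\dim(R^m)-\dim(\cM))=\dim(\cM)$, and the other round trips are analogous. The real substance lies in the axiom checks.

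For (1), I would first note that $\coker(\alpha)$ is finitely presented whenever $\alpha:P\to Q$ is a map between finitely generated projectives (realize $Q$ as a summand of some $R^k$ and read off a presentation), so $\rk(\alpha)=\dim(Q)-\dim(\coker(\alpha))$ is well-defined, with nonnegativity and the normalizations following from axiom (3) of $\dim$. The block diagonal additivity is immediate from $\coker(\alpha\oplus\beta)=\coker(\alpha)\oplus\coker(\beta)$ and axiom (2) of $\dim$. The inequality $\rk(\alpha\beta)\le\rk(\beta)$ follows from the exact sequence
$$\coker(\alpha)\xrightarrow{\bar\beta}\coker(\alpha\beta)\to\coker(\beta)\to 0$$
via axiom (3) of $\dim$. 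For the companion $\rk(\alpha\beta)\le\rk(\alpha)$, I would use a shearing trick: form $\phi=[\alpha\beta,\alpha]:P\to S\oplus Q$; the automorphism $(s,q)\mapsto(s-(q)\beta,q)$ of $S\oplus Q$ identifies $\coker(\phi)$ with $S\oplus\coker(\alpha)$, and projection onto $S$ then produces an exact sequence $Q\to\coker(\phi)\to\coker(\alpha\beta)\to 0$ of finitely presented modules, to which axiom (3) of $\dim$ rearranges to yield $\rk(\alpha\beta)\le\rk(\alpha)$. The upper triangular inequality is handled by an analogous cokernel computation together with axiom (3) of $\dim$.

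Construction (2) is essentially a transcription under $A\leftrightarrow\alpha_A$, since matrix multiplication, block diagonal sums, and upper triangular blocks correspond exactly to composition, direct sum, and the parallel block construction on maps between free modules. The real substance is (3), where the main obstacle is showing that $m-\rk(A)$ is independent of the chosen presentation of $\cM$. My plan is to reduce any two presentations $R^n\xrightarrow{A}R^m\to\cM\to 0$ and $R^{n'}\xrightarrow{A'}R^{m'}\to\cM\to 0$ to a common enlarged form by operations that preserve $m-\rk(A)$: appending identity blocks simultaneously increases $m$ and $\rk(A)$ by the same amount (axioms (3) and (4) of $\rk$), while invertible row and column operations leave $\rk(A)$ unchanged (axiom (2) of $\rk$); a Schanuel-type stabilization $\im(A)\oplus R^{m'}\cong\im(A')\oplus R^m$ exhibits the common refinement. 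Once well-definedness is settled, axioms (1) and (2) of $\dim$ are immediate from the normalizations and block diagonal axiom of $\rk$, and axiom (3) is verified by building compatible presentations of $\cM_1,\cM_2,\cM_3$ in which the presentation matrix of $\cM_2$ appears as an upper triangular block $\bigl(\begin{smallmatrix}A_1&C\\ 0&A_3\end{smallmatrix}\bigr)$ built from those of $\cM_1$ and $\cM_3$, after which axioms (2) and (4) of $\rk$ close the argument.
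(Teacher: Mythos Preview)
The paper does not give its own proof of this theorem: immediately after the statement it simply cites Malcolmson \cite[Theorem~4]{Malcolmson80} for the module/matrix correspondence and Schofield \cite[page~97]{Schofield} for the module/map correspondence. So there is no ``paper's proof'' to compare against in any substantive sense.

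Your sketch is essentially the standard argument that appears in those references, and the pieces are correct. A couple of small remarks. In (1), your shearing trick for $\rk(\alpha\beta)\le\rk(\alpha)$ is fine; an alternative one-liner is to observe that $\coker(\alpha)\oplus S$ surjects onto $\coker(\alpha\beta)\oplus Q$ (via a similar shear), which some readers may find slightly more transparent. In (3), the well-definedness step is where the actual content lies; your plan of stabilizing two presentations via Schanuel and then using that block-diagonal identity blocks and invertible row/column changes preserve $m-\rk(A)$ is exactly Malcolmson's argument, but be aware that the reduction to a \emph{common} matrix via elementary operations takes a bit of care (Schanuel gives $\ker(A)\oplus R^{m'}\cong\ker(A')\oplus R^m$, not a direct identification of the presenting matrices, so you still need to realize this isomorphism through operations that $\rk$ respects; this is where the paper's Lemma~\ref{L-Schanuel}, which is Malcolmson's Lemma~2, enters). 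Your outline would benefit from stating that step explicitly.
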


The correspondence between Sylvester module rank functions and Sylvester matrix rank functions in Theorem~\ref{T-dim vs rank} is in \cite[Theorem 4]{Malcolmson80}.
The correspondence between Sylvester module rank functions and Sylvester map rank functions in Theorem~\ref{T-dim vs rank} is in \cite[page 97]{Schofield}.

\begin{example} \label{E-nonamenable}
Let $\Gamma$ be a discrete group. The group ring $R\Gamma$ \cite{Passman} consists of finitely supported functions $f: \Gamma\rightarrow R$ which we shall write as $f=\sum_{s\in \Gamma}f_s s$, where $f_s\in R$ is zero except for finitely many $s\in \Gamma$. The addition and multiplication in $R\Gamma$ are given by
$$\sum_{s\in \Gamma}f_ss+\sum_{s\in \Gamma}g_ss=\sum_{s\in \Gamma}(f_s+g_s)s, \quad \big(\sum_{s\in \Gamma}f_ss\big)\big(\sum_{t\in \Gamma}g_tt\big)=\sum_{s, t\in \Gamma}f_sg_t(st).$$
Now assume that $\Gamma$ is nonamenable, and that $R$ is an integral domain. Denote by $K$ the fractional field of $R$. Then we have the group rings $R\Gamma$ and $K\Gamma$.
Bartholdi showed that for some suitable $n\in \Nb$ there is an injective map $(K\Gamma)^{n+1}\rightarrow (K\Gamma)^n$ of $K\Gamma$-modules \cite{Bartholdi}. Multiplying by a suitable element of $R$, we get an injective map $\alpha: (R\Gamma)^{n+1}\rightarrow (R\Gamma)^n$ of $R\Gamma$-modules, and thus an exact sequence
$$ 0\rightarrow (R\Gamma)^{n+1}\overset{\alpha}{\rightarrow} (R\Gamma)^n\rightarrow \coker(\alpha)\rightarrow 0$$
of finitely presented $R\Gamma$-modules. For any Sylvester module rank function $\dim$ of $R\Gamma$, we have
$$ \dim((R\Gamma)^n)=n<\dim((R\Gamma)^{n+1})+\dim(\coker(\alpha)).$$
\end{example}

\section{Bivariant Sylvester Module Rank Functions} \label{S-bivariant}

Let $R$ be a unital ring.

\begin{definition} \label{D-bivariant}
A {\it bivariant Sylvester module rank function} for $R$ is an $\Rb_{\ge 0}\cup \{+\infty\}$-valued function $(\cM_1, \cM_2)\mapsto \dim(\cM_1|\cM_2)$ on the class of all pairs of $R$-modules $\cM_1\subseteq \cM_2$ satisfying the following conditions:
\begin{enumerate}
\item $\dim(\cM_1|\cM_2)$ is an isomorphism invariant.
\item (Normalization) Setting $\dim(\cM)=\dim(\cM|\cM)$ for all  $R$-modules $\cM$, one has $\dim(0)=0$ and $\dim(R)=1$.
\item (Direct sum) For any $R$-modules $\cM_3\subseteq \cM_4$, one has
$$\dim(\cM_1\oplus \cM_3|\cM_2\oplus \cM_4)=\dim(\cM_1|\cM_2)+\dim(\cM_3|\cM_4).$$
\item (Continuity) $\dim(\cM_1|\cM_2)=\sup_{\cM_1'}\dim(\cM_1'|\cM_2)$ for $\cM_1'$ ranging over all finitely generated $R$-submodules of $\cM_1$.
\item (Continuity) When $\cM_1$ is finitely generated, $\dim(\cM_1|\cM_2)=\inf_{\cM_2'}\dim(\cM_1|\cM_2')$ for $\cM_2'$ ranging over all finitely generated $R$-submodules of $\cM_2$ containing $\cM_1$.
\item (Additivity)
$ \dim(\cM_2)=\dim(\cM_1|\cM_2)+\dim(\cM_2/\cM_1)$.
\end{enumerate}
\end{definition}

\begin{example} \label{E-sofic}
The first bivariant Sylvester module rank function was constructed in \cite{LL19} for the group ring $R\Gamma$ of any sofic group $\Gamma$, when a normalized length function $\rL$ for $R$ (see Definition~\ref{D-normalized length} below) is given. We recall the construction here. The group $\Gamma$ is {\it sofic} \cite{Gromov, Pestov, Weiss} if there is a collection of maps $\Sigma=\{\sigma_j: j\in J\}$ over a directed set $J$ with each $\sigma_j$ being  a map (not necessarily a group homomorphism) from $\Gamma$ to the permutation group of a nonempty finite set $X_j$ such that
\begin{enumerate}
\item for any $s, t\in \Gamma$, one has $\lim_{j\to \infty}\frac{1}{|X_j|}|\{x\in X_j: \sigma_{j, s}\sigma_{j, t}(x)=\sigma_{j, st}(x)\}|=1$,
\item for any distinct $s, t\in \Gamma$, one has $\lim_{j\to \infty}\frac{1}{|X_j|}|\{x\in X_j: \sigma_{j, s}(x)\neq \sigma_{j, t}(x)\}|=1$,
\item $\lim_{j\to \infty}|X_j|=\infty$.
\end{enumerate}
Fix  $\Sigma$ and fix an ultrafilter $\omega$ on
$J$ such that $\omega$ is {\it free} in the sense that for any $j\in J$, the set $\{i \in J : i\ge  j\}$ is in $\omega$.
Let $\cM_1\subseteq \cM_2$ be $R\Gamma$-modules. Denote by $\cF(\Gamma)$ the set of all finite subsets of $\Gamma$, and by $\sF(\cM)$ the set of finitely generated $R$-submodules of any $R\Gamma$-module $\cM$. For any $F\in \cF(\Gamma)$, $\sA\in \sF(\cM_1)$, $\sB\in \sF(\cM_2)$, and $j\in J$, denote by $\sM(\sB, F, \sigma_j)$ the $R$-submodule of $\cM_2^{X_j}$ generated by $\delta_xb-\delta_{\sigma_{j, s}(x)}(sb)$ for all $x\in X_j, s\in F$ and $b\in \sB$, and denote by $\sM(\sA, \sB, F, \sigma_j)$ the image of $\sA^{X_j}$ under the quotient map $\cM_2^{X_j}\rightarrow \cM_2^{X_j}/\sM(\sB, F, \sigma_j)$. Define \cite[Definition 3.1]{LL19}
$$\dim(\cM_1|\cM_2):=\sup_{\sA\in \sF(\cM_1)}\inf_{\sB\in \sF(\cM_2)}\inf_{F\in \cF(\Gamma)}\lim_{j\to \omega}\frac{\rL(\sM(\sA, \sB, F, \sigma_j))}{|X_j|}.$$
Then $\dim(\cdot|\cdot)$ is a bivariant Sylvester module rank function for $R\Gamma$ \cite[Theorem 1.1, Corollary 3.2, Proposition 3.4, Proposition 3.5]{LL19}. For connections of this bivariant Sylvester module rank function to dynamical invariants mean dimension and entropy, see \cite{LL19, Liang}.
If $s\in \Gamma$ has infinite order, then $\dim(R\Gamma (s-1)|R\Gamma)=1$ and $\dim(R\Gamma/R\Gamma(s-1))=0$ \cite[Example 6.3]{LL19}. In particular, when $\Gamma$ is the free group $\Fb_2$ with two generators $s$ and $t$, $R\Fb_2$ has a free $R\Gamma$-submodule with generators $s-1$ and $t-1$ \cite[Corollary 10.3.7.(iv)]{Passman}, and hence $R\Fb_2/R\Fb_2 (s-1)$ contains an $R\Fb_2$-submodule isomorphic to $R\Fb_2$ while $\dim(R\Fb_2/R\Fb_2(s-1))=0$.
\end{example}

For any bivariant Sylvester module rank function $\dim(\cdot|\cdot)$ for $R$, clearly $\cM\mapsto \dim(\cM)$ for finitely presented $R$-modules $\cM$ is a Sylvester module rank function for $R$.

The goal of this section is to prove the following two results.

\begin{theorem} \label{T-extension}
Every Sylvester module rank function for $R$ extends uniquely to a bivariant Sylvester module rank function for $R$.
\end{theorem}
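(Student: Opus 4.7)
The plan is to prove both uniqueness and existence by a three-stage reduction driven by the axioms, and to build the extension up from the original Sylvester module rank function via an infimum formula on finitely generated modules.

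For uniqueness, I would argue that the axioms force the value of $\dim(\cM_1|\cM_2)$ on every pair. Starting from an arbitrary pair $\cM_1\subseteq \cM_2$, continuity condition (4) reduces the problem to the case where $\cM_1$ is finitely generated; applying continuity (5) then further reduces to the case where both $\cM_1$ and $\cM_2$ are finitely generated. In that case, additivity (6) gives
$$\dim(\cM_1|\cM_2)=\dim(\cM_2)-\dim(\cM_2/\cM_1),$$
so everything comes down to knowing $\dim(\cN)$ for an arbitrary finitely generated $\cN$. Writing $\cN=F/K$ with $F$ a finitely generated free module and applying continuity (4) to $(K,F)$ together with additivity $\dim(F)=\dim(K|F)+\dim(F/K)$, and noting that $F/K'$ is finitely presented for any finitely generated $K'\subseteq K$ (so its $\dim$ is already prescribed), one is forced into
$$\dim(F/K)=\inf\{\dim(F/K'):K'\subseteq K,\ K'\text{ finitely generated}\}.$$
This determines $\dim(\cM_1|\cM_2)$ throughout.

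For existence I would turn this derivation into a definition, applied in reverse. First, for a finitely generated $\cN$, fix a presentation $\cN\cong F/K$ and set $\dim(\cN):=\inf_{K'}\dim(F/K')$ as above; the initial task is to show this is independent of the presentation, which follows from isomorphism invariance and the behaviour of Sylvester module rank functions under short exact sequences of finitely presented modules (a Schanuel-style comparison of any two presentations of $\cN$, approximated by finitely generated submodules of the kernels). Next, for a pair $\cM_1\subseteq \cM_2$ of finitely generated modules, define $\dim(\cM_1|\cM_2):=\dim(\cM_2)-\dim(\cM_2/\cM_1)$; then for $\cM_1$ finitely generated but $\cM_2$ arbitrary, define $\dim(\cM_1|\cM_2):=\inf_{\cM_2'}\dim(\cM_1|\cM_2')$ over finitely generated $\cM_2'$ with $\cM_1\subseteq\cM_2'\subseteq\cM_2$; and finally, for arbitrary $\cM_1\subseteq\cM_2$, put $\dim(\cM_1|\cM_2):=\sup_{\cM_1'}\dim(\cM_1'|\cM_2)$ over finitely generated $\cM_1'\subseteq\cM_1$. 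At each stage one must check that the new definition is compatible with the earlier one on overlapping domains (e.g., when $\cM_1$ and $\cM_2$ are both finitely generated, the sup/inf formulas should just give back the additivity formula), which reduces to a monotonicity argument in the relevant nets of finitely generated submodules.

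The remaining work is to verify the six axioms. Normalization, isomorphism invariance, and direct-sum additivity (3) are routine from the corresponding properties of the underlying Sylvester module rank function and from the fact that the sup and inf used in the construction respect direct sums. The two continuity conditions (4) and (5) essentially hold by construction, but require a cofinality argument to show that restricting the sup/inf to the smaller class of finitely generated submodules of $\cM_1$ (respectively $\cM_2$) agrees with the same sup/inf taken over a larger class after unfolding the definitions. The main obstacle, as I expect it, will be verifying the full additivity
$$\dim(\cM_2)=\dim(\cM_1|\cM_2)+\dim(\cM_2/\cM_1)$$
in the general case where neither $\cM_1$ nor $\cM_2$ is finitely generated. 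Both sides are defined via nested suprema and infima, so one must juggle a supremum over finitely generated $\cM_1'\subseteq \cM_1$ with an infimum over finitely generated $\cM_2'\supseteq \cM_1'$ inside $\cM_2$, and simultaneously understand $\dim(\cM_2/\cM_1)$ as a sup/inf over finitely generated submodules of $\cM_2/\cM_1$ and their preimages in $\cM_2$. The key technical step will be to prove a cofinality statement: as $\cM_1'$ ranges over finitely generated submodules of $\cM_1$ and $\cM_2'$ ranges over finitely generated submodules of $\cM_2$ containing $\cM_1'$, the pairs $(\cM_1',\cM_2')$ are cofinal in a way that makes $\dim(\cM_2')-\dim(\cM_1'|\cM_2')=\dim(\cM_2'/\cM_1')$ pass to the limit and match the sup/inf descriptions of $\dim(\cM_2)$ and $\dim(\cM_2/\cM_1)$. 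Once this is established, additivity falls out, and all six axioms will be in place.
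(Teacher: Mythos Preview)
Your overall architecture---the uniqueness argument via successive reduction, and the staged construction (first extend $\dim$ to finitely generated modules by an infimum over finitely presented covers, then define $\dim(\cM_1|\cM_2)$ by additivity when both are finitely generated, then take an infimum in $\cM_2$, then a supremum in $\cM_1$)---matches the paper exactly, including the Schanuel-type comparison of presentations. Axioms (1)--(5) do indeed fall out essentially as you describe.

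The gap is in your plan for additivity. The claim that a ``cofinality statement'' will let $\dim(\cM_2')-\dim(\cM_1'|\cM_2')=\dim(\cM_2'/\cM_1')$ pass to the limit is too optimistic: the three quantities $\dim(\cM_2)$, $\dim(\cM_1|\cM_2)$, $\dim(\cM_2/\cM_1)$ are each nested $\sup$--$\inf$ expressions over \emph{different} index sets, and the alternation of $\sup$ and $\inf$ prevents any direct cofinality argument from matching them up. Concretely, once you know additivity for $\cM_1$ finitely generated (this step \emph{is} a straightforward limit argument), the general case reduces to showing $\inf_{\cM_1'}\dim(\cM_2/\cM_1')=\dim(\cM_2/\cM_1)$ as $\cM_1'$ runs over finitely generated submodules of $\cM_1$; but neither inequality here is a formal limit statement, because $\dim(\cM_2/\cM_1)$ is itself defined by an unrelated $\sup$--$\inf$ over submodules of $\cM_2/\cM_1$.

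What the paper actually does at this point is prove, \emph{for the constructed function and prior to verifying axiom~(6)}, two structural lemmas that are not visible from cofinality alone: a parallelogram-type inequality
\[
\dim(\cM_1+\cM_2|\cM)+\dim(\cM_1\cap\cM_2|\cM)\le\dim(\cM_1|\cM)+\dim(\cM_2|\cM),
\]
and, as a consequence, monotonicity under homomorphisms: $\dim((\cM_1)\alpha|(\cM_2)\alpha)\le\dim(\cM_1|\cM_2)$ for any map $\alpha$ on $\cM_2$. The latter is exactly what lets you compare $\dim(\cM_2/\cM_1')$ with $\dim(\cM_2/\cM_1)$ and push the additivity through. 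These ingredients are genuine content (they rely on the exact-sequence inequality for the extended $\dim$ on finitely generated modules) and should be incorporated into your plan; without them the additivity verification does not close.
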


\begin{theorem} \label{T-additivity}
For any bivariant Sylvester module rank function $\dim(\cdot|\cdot)$ for $R$ and any $R$-modules $\cM_1\subseteq \cM_2\subseteq \cM_3$, we have
$$ \dim(\cM_2|\cM_3)=\dim(\cM_1|\cM_3)+\dim(\cM_2/\cM_1|\cM_3/\cM_1).$$
\end{theorem}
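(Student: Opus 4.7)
My plan is a layered reduction: progressively weaken the finite-generation hypotheses on $\cM_3$, then $\cM_2$, then $\cM_1$ via the continuity axioms (4) and (5), leaving an all-finite base case where axiom (6) with finite cancellation finishes.

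In the base case, $\cM_3$ is finitely generated, so $\dim(\cM_3)\le n<+\infty$ for any surjection $R^n\twoheadrightarrow \cM_3$ (using (2), (3), (6)), and every bivariant dimension in sight is finite. Axiom (6) applied to the three subchains $\cM_1\subseteq \cM_3$, $\cM_2\subseteq \cM_3$, and $\cM_2/\cM_1\subseteq \cM_3/\cM_1$, together with the isomorphism $(\cM_3/\cM_1)/(\cM_2/\cM_1)\cong \cM_3/\cM_2$ and axiom (1), yields three finite-valued identities; cancelling the common term $\dim(\cM_3/\cM_2)$ gives the desired formula.

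I would lift this in two stages. First, for finitely generated $\cM_1,\cM_2$ and arbitrary $\cM_3$: axiom (5) gives $\dim(\cM_2|\cM_3)=\inf_{\cM_3'}\dim(\cM_2|\cM_3')$ over finitely generated $\cM_3'$ containing both $\cM_1$ and $\cM_2$ in $\cM_3$; apply the base case inside each $\cM_3'$. A monotonicity immediate from (4) and (5), namely that $\dim(\cM|\cN)$ is nondecreasing in $\cM$ and nonincreasing in $\cN$, makes both $\dim(\cM_1|\cM_3')$ and $\dim(\cM_2/\cM_1|\cM_3'/\cM_1)$ decreasing in $\cM_3'$, so the infimum of the sum equals the sum of infima, each identified via (5) (inside $\cM_3$, respectively inside $\cM_3/\cM_1$) with the correct bivariant dimension. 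Second, for finitely generated $\cM_1$ and arbitrary $\cM_2$: axiom (4) gives $\dim(\cM_2|\cM_3)=\sup_{\cN_2}\dim(\cN_2|\cM_3)$ over finitely generated $\cN_2\supseteq \cM_1$ in $\cM_2$, which is cofinal in the finitely generated submodules of $\cM_2$; the first stage applies, and since the quotients $\cN_2/\cM_1$ are cofinal in the finitely generated submodules of $\cM_2/\cM_1$, axiom (4) inside $\cM_3/\cM_1$ delivers $\dim(\cM_2/\cM_1|\cM_3/\cM_1)$ on the other side.

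The hardest step is removing finite generation on $\cM_1$. For each finitely generated $K\subseteq \cM_1$, the previous stage applied to $K\subseteq \cM_2\subseteq \cM_3$ and to $K\subseteq \cM_1\subseteq \cM_3$ gives
\[
\dim(\cM_2|\cM_3) = \dim(K|\cM_3) + \dim(\cM_2/K|\cM_3/K),\qquad \dim(\cM_1|\cM_3) = \dim(K|\cM_3) + \dim(\cM_1/K|\cM_3/K).
\]
Applying the previous stage once more inside the quotient $\cM_3/K$ to chains $L/K\subseteq \cM_2/K\subseteq \cM_3/K$ for finitely generated $L$ with $K\subseteq L\subseteq \cM_1$, combined with axiom (4) letting $L$ grow so that $\sup_L\dim(L/K|\cM_3/K)=\dim(\cM_1/K|\cM_3/K)$, should identify the $K$-independent combination $\dim(\cM_2/K|\cM_3/K)-\dim(\cM_1/K|\cM_3/K)$ with $\dim(\cM_2/\cM_1|\cM_3/\cM_1)$. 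The main obstacle throughout is that when $\dim(\cM_3/\cM_2)=+\infty$ the naive cancellation used in the base case breaks down; this is managed by always rewriting a prospective subtraction as an additive identity before taking sup or inf, exploiting that $\dim(K|\cM_3)\le \dim(K)<+\infty$ for finitely generated $K$ keeps at least one summand finite in every instance of the previous stage.
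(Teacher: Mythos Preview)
Your base case and the first two reduction stages are correct and essentially reproduce the paper's Lemma~3.10 (the case $\cM_1$ finitely generated). The gap is in the last stage, removing finite generation on $\cM_1$. What you need there is precisely
\[
\inf_{K}\dim(\cM_2/K\,|\,\cM_3/K)=\dim(\cM_2/\cM_1\,|\,\cM_3/\cM_1),
\]
the infimum over finitely generated $K\subseteq\cM_1$; your argument shows that $\dim(\cM_2/K|\cM_3/K)=\dim(\cM_2|\cM_3)-\dim(K|\cM_3)$ and hence that the left side equals $\dim(\cM_2|\cM_3)-\dim(\cM_1|\cM_3)$, but identifying this with the right side is exactly the theorem you are proving. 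The phrase ``should identify'' papers over the fact that continuity axioms (4) and (5) speak only about submodules of the two slots, not about approximating the quotient denominator. No amount of bookkeeping with $+\infty$ fixes this: the issue persists even when every quantity in sight is finite.

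The paper fills this gap with real work: for $\dim(\cM_2/K|\cM_3/K)\ge\dim(\cM_2/\cM_1|\cM_3/\cM_1)$ it proves that bivariant rank does not increase under module maps (Proposition~3.15), which in turn rests on the inclusion--exclusion inequality $\dim(A+B|\cM)+\dim(A\cap B|\cM)\le\dim(A|\cM)+\dim(B|\cM)$ (Proposition~3.14); for the reverse inequality it runs a separate approximation argument (Lemma~3.16) using the fact that $\dim(\cdot|\cM_3)$, restricted to submodules of a fixed finitely generated $\cM_2$, can be computed as an infimum over finitely generated $\cM'\supseteq\cM_2$ (Proposition~3.13). None of these ingredients follows from the axioms by a one-liner, and your outline does not hint at any of them. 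You have correctly located where the difficulty lies, but misdiagnosed it as an $\infty$-cancellation issue rather than a genuine quotient-continuity obstacle.
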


From \cite[Lemma 7.7]{LL19} we have the following lemma, which gives the uniqueness part of Theorem~\ref{T-extension}.

\begin{lemma} \label{L-unique}
Let $\dim_1(\cdot|\cdot)$ and $\dim_2(\cdot|\cdot)$ be bivariant Sylvester module rank functions for $R$. If $\dim_1(\cM)=\dim_2(\cM)$ for all finitely presented $R$-modules $\cM$, then $\dim_1=\dim_2$.
\end{lemma}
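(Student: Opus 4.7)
The plan is to chain the continuity axioms (4) and (5) together with the additivity axiom (6) in order to reduce the problem to the case of finitely presented modules, where the hypothesis gives us agreement for free. The key intermediate target is to show that $\dim_1(\cM)=\dim_2(\cM)$ for every finitely generated $R$-module $\cM$, not merely for the finitely presented ones. Once this is achieved, the general two-variable case falls out by two layers of continuity.

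First I would prove agreement on finitely generated modules. Given such an $\cM$, choose a surjection $R^n\twoheadrightarrow \cM$ with kernel $K\subseteq R^n$. From axioms (2) and (3) we have $\dim_i(R^n)=n$ for $i=1,2$, and the additivity axiom (6) gives
$$\dim_i(\cM)=n-\dim_i(K|R^n).$$
So it suffices to show $\dim_1(K|R^n)=\dim_2(K|R^n)$. By the continuity axiom (4),
$$\dim_i(K|R^n)=\sup_{K'}\dim_i(K'|R^n),$$
where $K'$ ranges over the finitely generated submodules of $K$. For any such $K'$ the quotient $R^n/K'$ is \emph{finitely presented}, so the hypothesis gives $\dim_1(R^n/K')=\dim_2(R^n/K')$, and applying (6) again yields
$$\dim_i(K'|R^n)=n-\dim_i(R^n/K'),$$
so the two rank functions agree on each $(K'|R^n)$, hence after taking the supremum they agree on $(K|R^n)$, and consequently on $\cM$.

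For the general case, let $\cM_1\subseteq\cM_2$ be arbitrary $R$-modules. Using axiom (4) on the first slot,
$$\dim_i(\cM_1|\cM_2)=\sup_{\cM_1'}\dim_i(\cM_1'|\cM_2)$$
over finitely generated submodules $\cM_1'\subseteq\cM_1$; this reduces the problem to the case where $\cM_1$ is finitely generated. When $\cM_1$ is finitely generated, axiom (5) gives
$$\dim_i(\cM_1|\cM_2)=\inf_{\cM_2'}\dim_i(\cM_1|\cM_2'),$$
where $\cM_2'$ ranges over finitely generated submodules of $\cM_2$ containing $\cM_1$, so we are reduced to the case that both $\cM_1$ and $\cM_2'$ are finitely generated. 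In that case additivity (6) yields
$$\dim_i(\cM_1|\cM_2')=\dim_i(\cM_2')-\dim_i(\cM_2'/\cM_1),$$
and both $\cM_2'$ and $\cM_2'/\cM_1$ are finitely generated, so by the first step the right-hand side is the same for $i=1,2$. This completes the reduction.

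The only genuine subtlety I expect is bookkeeping around the fact that the given Sylvester module rank function is only assumed to agree on finitely \emph{presented} modules, while additivity most naturally gives us finitely \emph{generated} quotients. The device that resolves this is the identity $\dim_i(\cM)=n-\dim_i(K|R^n)$ together with the inner continuity (4), which trades the possibly non-finitely-generated kernel $K$ for its finitely generated subsubmodules $K'$ and thus produces the finitely presented quotients $R^n/K'$ on which the hypothesis can be invoked. Everything else is a formal two-step continuity argument.
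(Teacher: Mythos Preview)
Your proof is correct. The paper does not actually give its own proof of this lemma---it cites \cite[Lemma 7.7]{LL19}---so there is nothing to compare against directly in the text. That said, your argument is the natural one and exactly mirrors (in reverse) the existence construction the paper carries out immediately afterward: Lemma~\ref{L-dim for fg} extends $\dim$ to finitely generated modules by the same $\dim(\cM)=n-\sup_{K'}\dim_i(K'|R^n)$ device you used, and the subsequent definitions build up $\dim(\cM_1|\cM_2)$ via axioms (5) then (4), precisely as you unwind it. The only minor point worth making explicit is that the subtraction $\dim_i(\cM_2')-\dim_i(\cM_2'/\cM_1')$ is well-defined because finitely generated modules have finite dimension, which you have implicitly established in your first step via $\dim_i(\cM)=n-\dim_i(K|R^n)\le n$.
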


Let $\dim(\cdot)$ be a Sylvester module rank function for $R$. We shall extend it step by step to a bivariant Sylvester module rank function for $R$.

\begin{lemma} \label{L-quotient fp}
Let $\cM_1$ and $\cM_2$ be finitely presented $R$-modules such that $\cM_1$ is a quotient module of $\cM_2$. Then $\dim(\cM_2)\ge \dim(\cM_1)$.
\end{lemma}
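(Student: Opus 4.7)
The plan is to invoke axiom~(3) of the Sylvester module rank function, which gives the desired inequality $\dim(\cM_3)\le \dim(\cM_2)$ as soon as we can produce an exact sequence $L\to \cM_2\to \cM_1\to 0$ in which all three terms are finitely presented (so that $\dim$ is defined on every term). The obvious candidate for $L$ is a finitely generated free module mapping onto the kernel of the given surjection $\cM_2\twoheadrightarrow \cM_1$.

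First I would let $\phi:\cM_2\twoheadrightarrow \cM_1$ be a fixed surjection and set $K=\ker\phi$. The key ingredient is the standard fact that if $\cM_1$ is finitely presented and $\cM_2$ is finitely generated (which it is, being finitely presented), then $K$ is finitely generated. This follows either from Schanuel's lemma or by a direct diagram chase comparing $\phi$ against a chosen finite presentation $R^m\to R^n\to \cM_1\to 0$ of $\cM_1$ and lifting through the projectivity of $R^n$.

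Given that $K$ is finitely generated, I would pick some $n\in \Nb$ and a surjection $R^n\twoheadrightarrow K$. Composing with the inclusion $K\hookrightarrow \cM_2$ produces a map $\psi:R^n\to \cM_2$ with image exactly $K=\ker\phi$, so the sequence
$$R^n \xrightarrow{\psi} \cM_2 \xrightarrow{\phi} \cM_1 \to 0$$
is exact. Since $R^n$ is finitely presented (being free of finite rank) and $\cM_1,\cM_2$ are finitely presented by hypothesis, axiom~(3) of Definition~\ref{D-module} applies and yields $\dim(\cM_1)\le \dim(\cM_2)$, completing the proof.

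There is really no obstacle beyond the input lemma on finite generation of the kernel; this lemma is a staple of module theory and the rest of the argument is a direct application of the axiom. I would state (or cite) the kernel-finiteness fact explicitly since the whole point of the reduction is to render axiom~(3) applicable.
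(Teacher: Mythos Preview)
Your proposal is correct and matches the paper's proof essentially line for line: the paper also fixes a surjection $\alpha:\cM_2\to\cM_1$, cites the standard fact that $\ker(\alpha)$ is finitely generated (referencing \cite[Proposition 4.26]{Lam}), produces the exact sequence $R^n\to\cM_2\to\cM_1\to 0$ of finitely presented modules, and then applies axiom~(3) of Definition~\ref{D-module}.
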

\begin{proof} Let $\alpha: \cM_2\rightarrow \cM_1$ be a surjective map. Then $\ker(\alpha)$ is finitely generated \cite[Proposition 4.26]{Lam}. Thus we have an exact sequence
$$ R^n\rightarrow \cM_2\rightarrow \cM_1\rightarrow 0$$
of finitely presented $R$-modules
for some suitable $n\in \Nb$. Therefore $\dim(\cM_2)\ge \dim(\cM_1)$.
\end{proof}

The following lemma is \cite[Lemma 2]{Malcolmson80}, which is a strengthened version of Schanuel's lemma.

\begin{lemma} \label{L-Schanuel}
Let $\cM_1\subseteq \cM_2$ and $\cM_3\subseteq \cM_4$ be $R$-modules such that $\cM_2, \cM_4$ are projective and $\cM_2/\cM_1\cong \cM_4/\cM_3$. Then there is an isomorphism
$\alpha: \cM_2\oplus \cM_4\rightarrow \cM_2\oplus \cM_4$ such that $(\cM_1\oplus \cM_4)\alpha=\cM_2\oplus \cM_3$.
\end{lemma}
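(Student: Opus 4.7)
The plan is to construct $\alpha$ explicitly as a block-matrix automorphism lifting the given isomorphism of quotients. Denote by $\pi_2:\cM_2\to\cM_2/\cM_1$ and $\pi_4:\cM_4\to\cM_4/\cM_3$ the quotient maps and let $\phi:\cM_2/\cM_1\to\cM_4/\cM_3$ be the given isomorphism. Since $\cM_2$ is projective and $\pi_4$ is surjective, the map $\pi_2\phi:\cM_2\to\cM_4/\cM_3$ lifts to some $f:\cM_2\to\cM_4$ satisfying $f\pi_4=\pi_2\phi$. Symmetrically, projectivity of $\cM_4$ produces $g:\cM_4\to\cM_2$ with $g\pi_2=\pi_4\phi^{-1}$.

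With $f$ and $g$ fixed, I would define $\alpha:\cM_2\oplus\cM_4\to\cM_2\oplus\cM_4$ by the block matrix (acting on the right on row vectors, as in the paper's convention)
$$\alpha=\begin{pmatrix}\id_{\cM_2}&f\\ -g&\id_{\cM_4}-gf\end{pmatrix},$$
so that $(x,y)\alpha=\bigl(x-(y)g,\,(x)f+y-(y)gf\bigr)$. A direct block-matrix computation shows that $\alpha$ has two-sided inverse
$$\alpha^{-1}=\begin{pmatrix}\id_{\cM_2}-fg&-f\\ g&\id_{\cM_4}\end{pmatrix},$$
so $\alpha$ is an automorphism of $\cM_2\oplus\cM_4$.

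The last step is to verify the two inclusions $(\cM_1\oplus\cM_4)\alpha\subseteq\cM_2\oplus\cM_3$ and $(\cM_2\oplus\cM_3)\alpha^{-1}\subseteq\cM_1\oplus\cM_4$, which together yield the desired equality. For the first inclusion the only nontrivial coordinate to analyze is the second one; for $x\in\cM_1$ and $y\in\cM_4$, applying $\pi_4$ to $(x)f+y-(y)gf$ and using the relations $f\pi_4=\pi_2\phi$ and $g\pi_2=\pi_4\phi^{-1}$ simplifies the expression to $0+(y)\pi_4-(y)\pi_4=0$, so this coordinate indeed lies in $\cM_3$. The second inclusion is verified by the symmetric calculation on the first coordinate of $(u,v)\alpha^{-1}$ for $u\in\cM_2$, $v\in\cM_3$, using $\phi\phi^{-1}=\id$ and $\phi^{-1}\phi=\id$ to cancel the relevant terms. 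The main content of the argument is guessing the matrix form of $\alpha$; once it is written down, invertibility and both inclusions reduce to routine bookkeeping with the defining relations of $f$ and $g$.
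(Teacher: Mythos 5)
Your construction is correct, and all the details check out: the block matrix $\alpha$ has the two-sided inverse you exhibit, and both inclusions $(\cM_1\oplus\cM_4)\alpha\subseteq\cM_2\oplus\cM_3$ and $(\cM_2\oplus\cM_3)\alpha^{-1}\subseteq\cM_1\oplus\cM_4$ follow from the defining relations $f\pi_4=\pi_2\phi$ and $g\pi_2=\pi_4\phi^{-1}$ exactly as you indicate (the key identities being $gf\pi_4=\pi_4$ and $fg\pi_2=\pi_2$). One point worth noting for context: the paper does not supply its own proof of this lemma but instead cites it as Lemma~2 of Malcolmson's paper \cite{Malcolmson80}, where it appears as a strengthened form of Schanuel's lemma. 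Your explicit lifting-and-matrix argument is the standard way to establish that strengthened form and is essentially the proof one would find in the cited source, so you have supplied a self-contained argument in place of a citation rather than taken a genuinely different route. A small stylistic remark: once $\alpha$ is shown invertible, the two containments you verify together yield the equality $(\cM_1\oplus\cM_4)\alpha=\cM_2\oplus\cM_3$ because applying $\alpha$ to the second containment gives the reverse inclusion; it is worth stating that closing step explicitly rather than leaving it implicit.
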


\begin{lemma} \label{L-dim for fg}
Let $\cM$ be a finitely generated $R$-module. Write $\cM$ as $\cM_2/\cM_1$ for some finitely generated projective $R$-module $\cM_2$ and some $R$-submodule $\cM_1$ of $\cM_2$. Then $\inf_{\cM'_1}\dim(\cM_2/\cM_1')$, where $\cM'_1$ runs over finitely generated $R$-submodules of $\cM_1$, does not depend on the choice of the representation of $\cM$ as $\cM_2/\cM_1$. Thus $\dim(\cM):=\inf_{\cM'_1}\dim(\cM_2/\cM_1')=\lim_{\cM_1'\nearrow \cM_1}\dim(\cM_2/\cM_1')$ (where the set of finitely generated $R$-submodules of $\cM_1$ is ordered by inclusion) is well defined, is equal to $\inf_{\cM'} \dim(\cM')$ for $\cM'$ ranging over all finitely presented $R$-modules which admit $\cM$ as a quotient module,  and extends $\dim$ for finitely presented $R$-modules.
\end{lemma}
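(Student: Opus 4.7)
The plan is to verify the four assertions of the lemma in the following order: (i) each $\cM_2/\cM_1'$ is finitely presented and the net $\dim(\cM_2/\cM_1')$ is monotone decreasing in $\cM_1'$ (so the infimum coincides with the net limit); (ii) the infimum is independent of the presentation $\cM=\cM_2/\cM_1$; (iii) it equals $\inf_{\cM'}\dim(\cM')$ over finitely presented modules $\cM'$ admitting $\cM$ as a quotient; (iv) it agrees with $\dim(\cM)$ when $\cM$ is itself finitely presented.

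Step (i) is immediate: $\cM_2$ is finitely generated projective and $\cM_1'$ is finitely generated, so $\cM_2/\cM_1'$ is finitely presented; and whenever $\cM_1'\subseteq \cM_1''$ are finitely generated submodules of $\cM_1$, $\cM_2/\cM_1''$ is a quotient of $\cM_2/\cM_1'$, so Lemma~\ref{L-quotient fp} yields $\dim(\cM_2/\cM_1'')\le\dim(\cM_2/\cM_1')$.

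The main obstacle is step (ii), and I would dispatch it with Lemma~\ref{L-Schanuel}. Given a second presentation $\cM=\cN_2/\cN_1$ with $\cN_2$ finitely generated projective, Schanuel's lemma supplies an automorphism $\alpha$ of $\cM_2\oplus\cN_2$ with $(\cM_1\oplus\cN_2)\alpha=\cM_2\oplus\cN_1$. For a finitely generated $\cM_1'\subseteq\cM_1$, the image $(\cM_1'\oplus\cN_2)\alpha$ is a finitely generated submodule of $\cM_2\oplus\cN_1$; let $\cN_1'\subseteq\cN_1$ denote its projection onto the second coordinate, which is finitely generated, so that $(\cM_1'\oplus\cN_2)\alpha\subseteq\cM_2\oplus\cN_1'$. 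Then $\alpha$ descends to a surjection
$$\cM_2/\cM_1'\;\cong\;(\cM_2\oplus\cN_2)/(\cM_1'\oplus\cN_2)\alpha\;\twoheadrightarrow\;(\cM_2\oplus\cN_2)/(\cM_2\oplus\cN_1')\;=\;\cN_2/\cN_1',$$
whence Lemma~\ref{L-quotient fp} gives $\dim(\cN_2/\cN_1')\le\dim(\cM_2/\cM_1')$. Taking infima and exchanging the two presentations yields equality.

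For (iii), each $\cM_2/\cM_1'$ is itself a finitely presented module admitting $\cM$ as a quotient, so $\inf_{\cM'}\dim(\cM')\le\dim(\cM)$. Conversely, any finitely presented $\cM'$ with a surjection $\cM'\twoheadrightarrow\cM$ can be written $\cM'=P/K$ with $P$ finitely generated projective and $K$ finitely generated; lifting gives $\pi\colon P\twoheadrightarrow\cM$ and exhibits $\cM=P/\ker(\pi)$, so by (ii) we may compute $\dim(\cM)$ using this presentation, and the admissible choice $K\subseteq\ker(\pi)$ furnishes $\dim(\cM)\le\dim(P/K)=\dim(\cM')$. Finally, (iv) follows at once: if $\cM$ is already finitely presented we may arrange $\cM_1$ to be finitely generated, so $\cM_1'=\cM_1$ is admissible and yields $\dim(\cM_2/\cM_1)=\dim(\cM)$, while Lemma~\ref{L-quotient fp} prevents any other admissible $\cM_1'\subseteq\cM_1$ from producing a strictly smaller value.
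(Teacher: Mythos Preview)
Your proof is correct and follows essentially the same approach as the paper: both establish monotonicity via Lemma~\ref{L-quotient fp} and then use Schanuel's Lemma~\ref{L-Schanuel} to compare two presentations. Your step~(ii) is a slight simplification of the paper's argument: the paper enlarges $\cM_1'$ and $\cN_1'$ symmetrically so that $\alpha$ carries one rectangle exactly onto the other, producing an \emph{isomorphism} $\cM_2/\cM_1''\cong\cN_2/\cN_1''$, whereas you simply project $(\cM_1'\oplus\cN_2)\alpha$ to obtain $\cN_1'$ and get a \emph{surjection} $\cM_2/\cM_1'\twoheadrightarrow\cN_2/\cN_1'$, which is all Lemma~\ref{L-quotient fp} needs. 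You also spell out (iii) and (iv), which the paper dismisses as straightforward; your arguments there are fine.
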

\begin{proof} Note first that if $\cM_1'\subseteq \cM''_1$ are finitely generated $R$-submodules of $\cM_1$, then $\cM_2/\cM'_1$ and $\cM_2/\cM''_1$ are finitely presented $R$-modules and $\cM_2/\cM''_1$ is a quotient module of $\cM_2/\cM'_1$. Thus by Lemma~\ref{L-quotient fp} we have $\dim(\cM_2/\cM'_1)\ge \dim(\cM_2/\cM''_1)$.

Suppose that we also have $\cM=\cM_4/\cM_3$ for some finitely generated projective $R$-module $\cM_4$. By Lemma~\ref{L-Schanuel} we have an isomorphism $\alpha:\cM_2\oplus \cM_4\rightarrow \cM_2\oplus \cM_4$ such that $(\cM_1\oplus \cM_4)\alpha=\cM_2\oplus \cM_3$.
 Let $\cM'_1$ and $\cM'_3$ be finitely generated $R$-submodules of $\cM_1$ and $\cM_3$ respectively. Then
 $$(\cM_1'\oplus \cM_4)+(\cM_2\oplus \cM'_3)\alpha^{-1}=\cM_1''\oplus \cM_4$$
  for some finitely generated $R$-submodule $\cM_1''$ of $\cM_2$.
Since $\cM_1'\oplus \cM_4\subseteq \cM_1''\oplus \cM_4\subseteq \cM_1\oplus \cM_4$, we have $\cM_1'\subseteq \cM_1''\subseteq \cM_1$. Similarly, we have
$$(\cM_1'\oplus \cM_4)\alpha+(\cM_2\oplus \cM_3')=\cM_2\oplus \cM_3''$$
for some finitely generated $R$-submodule $\cM_3''$ of $\cM_3$ containing $\cM_3'$. Clearly $(\cM_1''\oplus \cM_4)\alpha=\cM_2\oplus \cM_3''$. Therefore $\alpha$ induces an isomorphism $\cM_2/\cM''_1\rightarrow \cM_4/\cM_3''$. From the first paragraph of the proof we then have $\dim(\cM_2/\cM'_1)\ge \dim(\cM_2/\cM''_1)=\dim(\cM_4/\cM_3'')$ and $\dim(\cM_4/\cM'_3)\ge \dim(\cM_4/\cM''_3)=\dim(\cM_2/\cM_1'')$. It follows that $\inf_{\cM'_1}\dim(\cM_2/\cM_1')=\inf_{\cM'_3}\dim(\cM_4/\cM_3')$.

Now it is straightforward to prove the rest of the statements of the lemma.
\end{proof}

The following lemma is obvious.

\begin{lemma} \label{L-quotient fg}
Let $\cM$ be a finitely generated  $R$-module and let $\cM'$ be a quotient module of $\cM$. Then $\dim(\cM)\ge \dim(\cM')$.
\end{lemma}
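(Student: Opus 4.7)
The statement is that for a finitely generated $R$-module $\cM$ and quotient $\cM'$ of $\cM$, we have $\dim(\cM) \ge \dim(\cM')$, where $\dim$ on finitely generated modules is the extension supplied by Lemma~\ref{L-dim for fg}. My plan is to exploit the flexibility in choosing the projective presentation guaranteed by that lemma: pick a single projective cover of $\cM$, and then realize $\cM'$ as a further quotient of the \emph{same} projective module, so that the two indexing sets for the defining infima become nested in the obvious way.

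Concretely, write $\cM = \cM_2/\cM_1$ for some finitely generated projective $R$-module $\cM_2$ and submodule $\cM_1 \subseteq \cM_2$. The surjection $\cM \twoheadrightarrow \cM'$ pulls back to a surjection $\cM_2 \twoheadrightarrow \cM'$ whose kernel is some submodule $\cM_1'' \subseteq \cM_2$ containing $\cM_1$, so $\cM' \cong \cM_2/\cM_1''$. By Lemma~\ref{L-dim for fg} applied to each module,
\[
\dim(\cM) = \inf_{\cM_1'} \dim(\cM_2/\cM_1'), \qquad \dim(\cM') = \inf_{\cM_1'''} \dim(\cM_2/\cM_1'''),
\]
where $\cM_1'$ ranges over finitely generated submodules of $\cM_1$ and $\cM_1'''$ ranges over finitely generated submodules of $\cM_1''$.

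Since $\cM_1 \subseteq \cM_1''$, every finitely generated submodule of $\cM_1$ is in particular a finitely generated submodule of $\cM_1''$. Hence the indexing family for $\dim(\cM')$ contains the indexing family for $\dim(\cM)$, so the former infimum is taken over a larger set and is therefore no larger:
\[
\dim(\cM') = \inf_{\cM_1'''} \dim(\cM_2/\cM_1''') \;\le\; \inf_{\cM_1'} \dim(\cM_2/\cM_1') = \dim(\cM).
\]
This gives the claim. There is no real obstacle here; the only thing to verify is the well-definedness already carried out in Lemma~\ref{L-dim for fg}, which frees us to use any convenient projective presentation, and in particular the same $\cM_2$ for both $\cM$ and $\cM'$.
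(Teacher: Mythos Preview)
Your proof is correct and is precisely the kind of argument the paper has in mind: the paper states this lemma without proof, calling it ``obvious'' immediately after Lemma~\ref{L-dim for fg}. Your nesting-of-infima argument (using the same projective $\cM_2$ for both modules) is exactly the intended reasoning; an equally immediate variant would use the alternative description in Lemma~\ref{L-dim for fg} of $\dim(\cM)$ as $\inf_{\cM^*}\dim(\cM^*)$ over finitely presented $\cM^*$ surjecting onto $\cM$, where the inclusion of index sets is even more transparent.
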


For any finitely generated  $R$-modules $\cM_1\subseteq \cM_2$, we define $$\dim(\cM_1|\cM_2):=\dim(\cM_2)-\dim(\cM_2/\cM_1)\ge 0.$$

\begin{lemma} \label{L-decreasing}
Let $\cM_1\subseteq \cM_2\subseteq \cM_3$ be finitely generated $R$-modules. Then
$$\dim(\cM_1|\cM_2)\ge \dim(\cM_1|\cM_3).$$
\end{lemma}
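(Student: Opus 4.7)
The strategy is to apply two extensions of the Sylvester module rank axioms—from finitely presented to finitely generated modules—to the natural short exact sequence
\[ 0 \rightarrow \cM_2 \overset{\iota}{\rightarrow} \cM_3 \oplus \cM_2/\cM_1 \overset{\pi}{\rightarrow} \cM_3/\cM_1 \rightarrow 0 \]
of finitely generated $R$-modules, where $\iota(x) = (x, x + \cM_1)$ and $\pi(y, \bar z) = (y + \cM_1) - \bar z$ (viewing $\cM_2/\cM_1$ as a submodule of $\cM_3/\cM_1$). Exactness is a direct check. Once the two extensions are in place, we obtain $\dim(\cM_3) + \dim(\cM_2/\cM_1) = \dim(\cM_3 \oplus \cM_2/\cM_1) \le \dim(\cM_2) + \dim(\cM_3/\cM_1)$, and rearrangement gives the claim.

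The first auxiliary extension is direct-sum additivity on finitely generated modules: $\dim(\cA \oplus \cB) = \dim(\cA) + \dim(\cB)$ for finitely generated $\cA, \cB$. Present $\cA = P/K_A$ and $\cB = Q/K_B$ with $P, Q$ finitely generated projective. Any finitely generated submodule of $K_A \oplus K_B$ is contained in one of the form $K_A^0 \oplus K_B^0$ (take $K_A^0, K_B^0$ to be the projections), so such submodules are cofinal. Since $(P \oplus Q)/(K_A^0 \oplus K_B^0) \cong (P/K_A^0) \oplus (Q/K_B^0)$ and axiom (2) of Definition~\ref{D-module} is additive on direct sums of finitely presented modules, Lemma~\ref{L-dim for fg} yields the desired identity.

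The second and more substantive extension is subadditivity: for any exact sequence $\cN_1 \overset{\phi}{\rightarrow} \cN_2 \rightarrow \cN_3 \rightarrow 0$ of finitely generated modules, $\dim(\cN_2) \le \dim(\cN_1) + \dim(\cN_3)$. Present $\cN_2 = P_2/L_2$ with $P_2$ finitely generated projective, pick a finitely generated projective $P_1$ surjecting onto $\cN_1$ with kernel $L_1$, and lift $\phi$ to $\tilde\phi: P_1 \rightarrow P_2$, so that $\tilde\phi(L_1) \subseteq L_2$. For finitely generated $L_1^0 \subseteq L_1$ and $L_2^0 \subseteq L_2$ with $L_2^0 \supseteq \tilde\phi(L_1^0)$, we have an exact sequence of \emph{finitely presented} modules
\[ P_1/L_1^0 \rightarrow P_2/L_2^0 \rightarrow P_2/(L_2^0 + \tilde\phi(P_1)) \rightarrow 0, \]
to which axiom (3) of Definition~\ref{D-module} applies. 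Given $\epsilon > 0$, one chooses $L_1^0$ with $\dim(P_1/L_1^0) < \dim(\cN_1) + \epsilon$, then a finitely generated $L_2^* \subseteq L_2$ with $\dim(P_2/(L_2^* + \tilde\phi(P_1))) < \dim(\cN_3) + \epsilon$, and sets $L_2^0 := L_2^* + \tilde\phi(L_1^0)$; axiom (3) and Lemma~\ref{L-dim for fg} then give $\dim(\cN_2) \le \dim(P_2/L_2^0) < \dim(\cN_1) + \dim(\cN_3) + 2\epsilon$.

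The main technical care is the coordination in the subadditivity step, where $L_2^0$ must be large enough to contain $\tilde\phi(L_1^0)$ (so that the induced map between quotients is well defined) yet small enough that $\dim(P_2/(L_2^0 + \tilde\phi(P_1)))$ stays close to $\dim(\cN_3)$; the explicit choice above threads both needs. The rest is formal once the two extensions of axioms (2) and (3) to the finitely generated setting have been secured.
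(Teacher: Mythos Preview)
Your proof is correct and rests on the same core exact sequence $0 \to \cM_2 \to \cM_3 \oplus \cM_2/\cM_1 \to \cM_3/\cM_1 \to 0$ that the paper uses. The only difference is organizational: you first isolate direct-sum additivity and subadditivity for finitely generated modules as standalone lemmas and then apply them, whereas the paper carries out the finitely presented approximation of this same sequence in a single fused construction.
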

\begin{proof} Take finitely generated free $R$-modules $\cN_2\subseteq \cN_3$ and a surjective map $\alpha: \cN_3\rightarrow \cM_3$ with $(\cN_2)\alpha=\cM_2$. Denote by $\cN_1$ the preimage of $\cM_1$ in $\cN_3$. Note that $\cN_1=\ker(\alpha)+(\cN_1\cap \cN_2)$.

Let $\varepsilon>0$. Take finitely generated $R$-submodules $\cN_1'$ and $\cW_2$ of $\cN_1$ and $\ker(\alpha)\cap \cN_2$ respectively such that
$$\dim(\cN_3/\cN_1')-\dim(\cM_3/\cM_1), \dim(\cN_2/\cW_2)-\dim(\cM_2)<\varepsilon.$$
Since $\cN_1=\ker(\alpha)+(\cN_1\cap \cN_2)$, enlarging $\cN_1'$ if necessary, we may assume that $\cN_1'=\cW+\cN'_{12}$ for some finitely generated $R$-submodules $\cW$ and $\cN'_{12}$ of $\ker(\alpha)$ and $\cN_1\cap \cN_2$ respectively such that $\cW_2\subseteq \cW, \cN'_{12}$.

Consider the surjective map $\beta: \cN_3/\cW\oplus \cN_2/\cN'_{12}\rightarrow \cN_3/\cN_1'$ sending
$(x+\cW, y+\cN'_{12})$ to $x-y+\cN'_1$. Also consider the map $\gamma: \cN_2/\cW_2\rightarrow \cN_3/\cW\oplus \cN_2/\cN'_{12}$ sending $z+\cW_2$ to $(z+\cW, z+\cN'_{12})$. Clearly $\gamma\beta=0$. Since $\cW+\cN'_{12}=\cN'_1$, it is easy to see that the sequence
$$ \cN_2/\cW_2\overset{\gamma}{\rightarrow} \cN_3/\cW\oplus \cN_2/\cN'_{12}\overset{\beta}{\rightarrow} \cN_3/\cN_1'\rightarrow 0$$
of finitely presented $R$-modules is exact. Thus
\begin{align*}
\dim(\cM_3)+\dim(\cM_2/\cM_1)&\le \dim(\cN_3/\cW)+\dim(\cN_2/\cN'_{12})\\
&=\dim(\cN_3/\cW\oplus \cN_2/\cN'_{12})\\
&\le \dim(\cN_2/\cW_2)+\dim(\cN_3/\cN_1')\\
&\le \dim(\cM_2)+\dim(\cM_3/\cM_1)+2\varepsilon.
\end{align*}
It follows that
$$\dim(\cM_3)+\dim(\cM_2/\cM_1)\le \dim(\cM_2)+\dim(\cM_3/\cM_1),$$
and hence $\dim(\cM_1|\cM_3)\le \dim(\cM_1|\cM_2)$.
\end{proof}

Let $\cM_2$ be  an $R$-module and $\cM_1$ a finitely generated $R$-submodule of $\cM_2$. We define
$$\dim(\cM_1|\cM_2):=\inf_{\cM_2'}\dim(\cM_1|\cM_2')=\lim_{\cM_2'\nearrow\cM_2}\dim(\cM_1|\cM_2')$$
 for $\cM_2'$ ranging over finitely generated $R$-submodules of $\cM_2$ containing $\cM_1$ ordered by inclusion.
By Lemma~\ref{L-decreasing} it coincides with the earlier definition when both $\cM_1$ and $\cM_2$ are finitely generated.
The following lemma is a direct consequence of Lemma~\ref{L-quotient fg}.

\begin{lemma} \label{L-increasing}
Let $\cM_1\subseteq \cM_2\subseteq \cM_3$ be $R$-modules such that $\cM_1$ and $\cM_2$ are finitely generated. Then $\dim(\cM_1|\cM_3)\le \dim(\cM_2|\cM_3)$.
\end{lemma}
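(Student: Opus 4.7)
The plan is to reduce everything to the case where the ambient module is finitely generated and then invoke Lemma~\ref{L-quotient fg} on the quotients by $\cM_1$ and $\cM_2$.

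First I would pass to a cofinal subcollection in the net defining $\dim(\cM_1|\cM_3)$. By definition,
$$\dim(\cM_1|\cM_3)=\inf_{\cM_3'}\dim(\cM_1|\cM_3'),$$
where $\cM_3'$ ranges over the finitely generated $R$-submodules of $\cM_3$ containing $\cM_1$. Since $\cM_2$ itself is finitely generated, the subcollection of those $\cM_3'$ which also contain $\cM_2$ is cofinal. Because this net is monotone by Lemma~\ref{L-decreasing}, restricting to this cofinal subcollection does not change the limit, so
$$\dim(\cM_1|\cM_3)=\inf\{\dim(\cM_1|\cM_3'):\cM_3'\text{ f.g., } \cM_2\subseteq \cM_3'\subseteq \cM_3\}.$$

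Next, for each finitely generated $\cM_3'$ with $\cM_2\subseteq \cM_3'\subseteq \cM_3$, both $\cM_1$ and $\cM_2$ sit inside the finitely generated module $\cM_3'$, so by the earlier definition
$$\dim(\cM_1|\cM_3')=\dim(\cM_3')-\dim(\cM_3'/\cM_1),\qquad \dim(\cM_2|\cM_3')=\dim(\cM_3')-\dim(\cM_3'/\cM_2).$$
The inclusion $\cM_1\subseteq \cM_2$ makes $\cM_3'/\cM_2$ a quotient of the finitely generated module $\cM_3'/\cM_1$, so Lemma~\ref{L-quotient fg} yields $\dim(\cM_3'/\cM_1)\ge \dim(\cM_3'/\cM_2)$, and hence
$$\dim(\cM_1|\cM_3')\le \dim(\cM_2|\cM_3').$$

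Finally, taking the infimum over all $\cM_3'$ in the cofinal subcollection above gives
$$\dim(\cM_1|\cM_3)\le \inf_{\cM_3'\supseteq \cM_2}\dim(\cM_2|\cM_3')=\dim(\cM_2|\cM_3),$$
which is the desired inequality. I do not anticipate any real obstacle; the only point requiring a small verification is the cofinality argument in the first step, which is needed so that the infima in the two definitions are taken over compatible families.
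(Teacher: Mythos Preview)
Your proof is correct and is exactly the argument the paper has in mind: the paper states only that the lemma is ``a direct consequence of Lemma~\ref{L-quotient fg},'' and your write-up simply unpacks that, reducing via cofinality to finitely generated $\cM_3'$ and then applying Lemma~\ref{L-quotient fg} to the quotients $\cM_3'/\cM_1\twoheadrightarrow \cM_3'/\cM_2$. The cofinality step (using that $\cM_2$ is finitely generated and that the net is monotone by Lemma~\ref{L-decreasing}) is the only thing one needs to check, and you handle it correctly.
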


Let $\cM_1\subseteq \cM_2$ be $R$-modules. We define
$$ \dim(\cM_1|\cM_2):=\sup_{\cM_1'}\dim(\cM_1'|\cM_2)=\lim_{\cM_1'\nearrow \cM_1}\dim(\cM_1'|\cM_2)$$
for $\cM_1'$ ranging over finitely generated $R$-submodules of $\cM_1$ ordered by inclusion.
By Lemma~\ref{L-increasing} this extends the earlier definition when $\cM_1$ is finitely generated. We also define
$$\dim(\cM):=\dim(\cM|\cM)$$
for all $R$-modules $\cM$. It coincides with the earlier definition when $\cM$ is finitely generated.

\begin{lemma} \label{L-direct sum}
Let $\cM_1\subseteq \cM_2$ and $\cM_3\subseteq \cM_4$ be $R$-modules. Then
$$\dim(\cM_1\oplus \cM_3|\cM_2\oplus \cM_4)=\dim(\cM_1|\cM_2)+\dim(\cM_3|\cM_4).$$
\end{lemma}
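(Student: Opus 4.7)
The plan is to prove the direct sum formula by climbing through the layered definition of $\dim(\cdot|\cdot)$ in exactly the same order it was built up, reducing each level to the preceding one via the monotonicity lemmas.

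First I would verify the formula for finitely presented modules with $\cM_1=\cM_2$ and $\cM_3=\cM_4$: this is exactly Definition~\ref{D-module}(2) applied to the original Sylvester module rank function. Next, for finitely generated $\cM_1,\cM_3$ (with $\cM_2=\cM_1$, $\cM_4=\cM_3$), I would write $\cM_1=P_1/\cN_1$ and $\cM_3=P_3/\cN_3$ with $P_1,P_3$ finitely generated projective, so that $\cM_1\oplus\cM_3=(P_1\oplus P_3)/(\cN_1\oplus\cN_3)$. Any finitely generated $R$-submodule of $\cN_1\oplus\cN_3$ is contained in $\cN_1'\oplus\cN_3'$ for finitely generated $\cN_j'\subseteq\cN_j$ (take the projections and intersect with $\cN_j$). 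Using Lemma~\ref{L-quotient fp} to get monotonicity of $\dim$ along the directed system, the infimum of $\dim((P_1\oplus P_3)/\cN'')$ over finitely generated $\cN''\subseteq\cN_1\oplus\cN_3$ coincides with the infimum over product submodules $\cN_1'\oplus\cN_3'$, which in turn equals $\dim(\cM_1)+\dim(\cM_3)$ by the finitely presented case.

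Now I would handle the case when $\cM_1\subseteq\cM_2$ and $\cM_3\subseteq\cM_4$ are all finitely generated. By definition $\dim(\cM_1|\cM_2)=\dim(\cM_2)-\dim(\cM_2/\cM_1)$, and the analogous formula holds for the direct sums since $(\cM_2\oplus\cM_4)/(\cM_1\oplus\cM_3)\cong \cM_2/\cM_1\oplus\cM_4/\cM_3$. Applying the finitely generated direct sum identity (just proved) to both $\cM_2\oplus\cM_4$ and to $\cM_2/\cM_1\oplus\cM_4/\cM_3$ yields the claim.

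The next two steps are the $\inf$ layer (taking $\cM_1,\cM_3$ finitely generated and $\cM_2,\cM_4$ arbitrary) and the $\sup$ layer (fully general). For each I would use the same cofinality argument: any finitely generated $R$-submodule of $\cM_2\oplus\cM_4$ containing $\cM_1\oplus\cM_3$ is contained in some $\cM_2'\oplus\cM_4'$ with $\cM_j'$ finitely generated and containing $\cM_{j-1}$ appropriately, and, dually, any finitely generated submodule of $\cM_1\oplus\cM_3$ is contained in a product $\cM_1'\oplus\cM_3'$ of finitely generated submodules. Combined with Lemma~\ref{L-decreasing} (for the $\inf$ step) and Lemma~\ref{L-increasing} (for the $\sup$ step), this lets me replace the infima/suprema over general finitely generated submodules of the direct sum by infima/suprema over product submodules, and then distribute them to obtain the sum $\dim(\cM_1|\cM_2)+\dim(\cM_3|\cM_4)$.

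The main obstacle is purely the bookkeeping at the $\inf$ and $\sup$ layers: one must verify that enlarging a finitely generated submodule of $\cM_2\oplus\cM_4$ to a product $\cM_2'\oplus\cM_4'$ does not increase the relevant dimension (so the inequality in one direction is preserved), and symmetrically for the $\sup$ layer. Everything else is a straightforward direct computation once the preceding lemmas are in hand.
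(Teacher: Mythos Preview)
Your proposal is correct and follows essentially the same route as the paper: the paper also establishes the direct-sum identity first for $\dim(\cdot)$ on finitely generated modules via the quotient presentation and cofinality of product submodules, then lifts it to $\dim(\cdot|\cdot)$ on pairs of finitely generated modules via the defining subtraction, and finally passes through the $\inf$ and $\sup$ layers using exactly the cofinality-plus-monotonicity argument you describe (with Lemmas~\ref{L-decreasing} and~\ref{L-increasing}). The only cosmetic difference is that the paper writes the limits explicitly as $\lim_{\cM'\nearrow}$ rather than phrasing the argument in terms of cofinal subsystems, but the content is identical.
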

\begin{proof} Let $\cM_1^\sharp$ and $\cM_2^\sharp$ be finitely generated $R$-modules. For $j=1, 2$, write $\cM_j^\sharp$ as $\cM_j^\flat/\cM_j^*$ for some finitely generated free $R$-module $\cM_j^\flat$ and some $R$-submodule $\cM_j^*$ of $\cM_j^\flat$. Then $\cM_1^\sharp\oplus \cM_2^\sharp$ is isomorphic to $(\cM_1^\flat\oplus \cM_2^\flat)/(\cM_1^*\oplus \cM_2^*)$.
We have
\begin{align} \label{E-direct sum1}
\dim(\cM_1^\sharp\oplus \cM_2^\sharp)&=\lim_{\cM'\nearrow (\cM_1^*\oplus \cM_2^*)}\dim\big((\cM_1^\flat\oplus \cM_2^\flat)/\cM'\big)\\
\nonumber &=\lim_{\cM_1'\nearrow \cM_1^*, \cM_2'\nearrow \cM_2^*}\dim\big((\cM_1^\flat\oplus \cM_2^\flat)/(\cM_1'\oplus \cM_2')\big)\\
\nonumber &=\lim_{\cM_1'\nearrow \cM_1^*, \cM_2'\nearrow \cM_2^*}\dim\big((\cM_1^\flat/\cM_1')\oplus (\cM_2^\flat/\cM_2')\big)\\
\nonumber &=\lim_{\cM_1'\nearrow \cM_1^*, \cM_2'\nearrow \cM_2^*}\big(\dim(\cM_1^\flat/\cM_1')+\dim(\cM_2^\flat/\cM_2')\big)\\
\nonumber &=\dim(\cM_1^\sharp)+\dim(\cM_2^\sharp),
\end{align}
where $\cM'$ (resp. $\cM_j'$) ranges over finitely generated $R$-submodules of $\cM_1^*\oplus \cM_2^*$ (resp. $\cM_j^*$)  ordered by inclusion,
and the 4th equality comes from (2) of
Definition~\ref{D-module}.

Now consider the case $\cM_1\subseteq \cM_2$ and $\cM_3\subseteq \cM_4$  are finitely generated $R$-modules. We have
\begin{align*}
\dim(\cM_1\oplus \cM_3|\cM_2\oplus \cM_4)&=\dim(\cM_2\oplus \cM_4)-\dim\big((\cM_2\oplus \cM_4)/(\cM_1\oplus \cM_3)\big)\\
&=\dim(\cM_2\oplus \cM_4)-\dim\big((\cM_2/\cM_1)\oplus (\cM_4/\cM_3)\big)\\
&\overset{\eqref{E-direct sum1}}=\dim(\cM_2)+\dim(\cM_4)-\dim(\cM_2/\cM_1)-\dim(\cM_4/\cM_3)\\
&=\dim(\cM_1|\cM_2)+\dim(\cM_3|\cM_4).
\end{align*}

Next consider the case $\cM_1$ and $\cM_3$ are finitely generated. We have
\begin{align*}
\dim(\cM_1\oplus \cM_3|\cM_2\oplus \cM_4)&= \lim_{\cM'\nearrow (\cM_2\oplus \cM_4)}\dim(\cM_1\oplus \cM_3|\cM')\\
&=\lim_{\cM_2'\nearrow \cM_2, \cM_4'\nearrow \cM_4}\dim(\cM_1\oplus \cM_3|\cM_2'\oplus \cM_4')\\
&=\lim_{\cM_2'\nearrow \cM_2, \cM_4'\nearrow \cM_4}\big(\dim(\cM_1|\cM_2')+\dim(\cM_3|\cM_4')\big)\\
&=\dim(\cM_1|\cM_2)+\dim(\cM_3|\cM_4),
\end{align*}
where $\cM'$ (resp. $\cM_j'$) ranges over finitely generated $R$-submodules of $\cM_2\oplus \cM_4$ (resp. $\cM_j$) containing $\cM_1\oplus \cM_3$ (resp. $\cM_{j-1}$) ordered by inclusion.

Finally consider arbitrary $R$-modules $\cM_1\subseteq \cM_2$ and $\cM_3\subseteq \cM_4$. We have
 \begin{align*}
\dim(\cM_1\oplus \cM_3|\cM_2\oplus \cM_4)&= \lim_{\cM'\nearrow (\cM_1\oplus \cM_3)}\dim(\cM'|\cM_2\oplus \cM_4)\\
&=\lim_{\cM_1'\nearrow \cM_1, \cM_3'\nearrow \cM_3}\dim(\cM_1'\oplus \cM_3'|\cM_2\oplus \cM_4)\\
&=\lim_{\cM_1'\nearrow \cM_1, \cM_3'\nearrow \cM_3}\big(\dim(\cM_1'|\cM_2)+\dim(\cM_3'|\cM_4)\big)\\
&=\dim(\cM_1|\cM_2)+\dim(\cM_3|\cM_4),
\end{align*}
where $\cM'$ (resp. $\cM_j'$) ranges over finitely generated $R$-submodules of $\cM_1\oplus \cM_3$ (resp. $\cM_j$)  ordered by inclusion.
\end{proof}

So far clearly $\dim(\cdot|\cdot)$ satisfies all the conditions in Definition~\ref{D-bivariant} except that the additivity has not been verified yet. In Lemma~\ref{L-additivity} below we shall actually show that $\dim(\cdot|\cdot)$ satisfies the strong additivity in Theorem~\ref{T-additivity}, which then proves both Theorems~\ref{T-extension} and \ref{T-additivity}.

\begin{lemma} \label{L-addition}
For any $R$-modules $\cM_1\subseteq \cM_2\subseteq \cM_3$, if $\cM_1$ is finitely generated, then
\begin{align*}
\dim(\cM_2|\cM_3)=\dim(\cM_1|\cM_3)+\dim(\cM_2/\cM_1|\cM_3/\cM_1).
\end{align*}
\end{lemma}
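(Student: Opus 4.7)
My plan is to prove the identity by a two-stage reduction to the case where all three modules are finitely generated, and then to verify the case of three finitely generated modules by direct computation using the definition $\dim(\cN_1|\cN_2)=\dim(\cN_2)-\dim(\cN_2/\cN_1)$ for finitely generated $\cN_1\subseteq\cN_2$.

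\emph{Reduction to $\cM_2$ finitely generated.} Using the continuity defining $\dim(\cdot|\cdot)$ on not-necessarily-finitely-generated first argument, I would write
\[
\dim(\cM_2|\cM_3)=\sup_{\cM_2'}\dim(\cM_2'|\cM_3),
\]
where $\cM_2'$ ranges over the finitely generated $R$-submodules of $\cM_2$; by Lemma~\ref{L-increasing} and the fact that $\cM_1$ is finitely generated, I may restrict to those $\cM_2'$ containing $\cM_1$. Every finitely generated submodule of $\cM_2/\cM_1$ then has the form $\cM_2'/\cM_1$ for such a $\cM_2'$ (lift a finite generating set and add $\cM_1$), so the same net also computes $\dim(\cM_2/\cM_1|\cM_3/\cM_1)$. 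Thus, if the identity holds whenever $\cM_2$ is finitely generated, taking the supremum in $\cM_2'$ (and using finiteness of $\dim(\cM_1|\cM_3)$, which comes from writing it as an infimum of finite numbers over finitely generated $\cM_3'\supseteq\cM_1$) gives the identity in general.

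\emph{Reduction to $\cM_3$ finitely generated.} Now assume $\cM_1\subseteq\cM_2$ are finitely generated and $\cM_3$ is arbitrary. By the continuity condition in Definition~\ref{D-bivariant} combined with Lemma~\ref{L-decreasing},
\[
\dim(\cM_j|\cM_3)=\inf_{\cM_3'}\dim(\cM_j|\cM_3')\qquad (j=1,2),
\]
where $\cM_3'$ runs over the finitely generated $R$-submodules of $\cM_3$ containing $\cM_2$; the analogous formula expresses $\dim(\cM_2/\cM_1|\cM_3/\cM_1)$ as an infimum of $\dim(\cM_2/\cM_1|\cM_3'/\cM_1)$ over the same directed set (again using that every finitely generated submodule of $\cM_3/\cM_1$ containing $\cM_2/\cM_1$ is $\cM_3'/\cM_1$ for such a $\cM_3'$). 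Since each of these three quantities is monotone decreasing along the net of $\cM_3'$, the infimum of their sum equals the sum of their infima, so it suffices to prove the identity when $\cM_3$ (and hence $\cM_1, \cM_2$) are all finitely generated.

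\emph{The finitely generated case.} When all three modules are finitely generated, the definition and the isomorphism $(\cM_3/\cM_1)/(\cM_2/\cM_1)\cong \cM_3/\cM_2$ give
\[
\dim(\cM_1|\cM_3)+\dim(\cM_2/\cM_1|\cM_3/\cM_1)=\bigl(\dim(\cM_3)-\dim(\cM_3/\cM_1)\bigr)+\bigl(\dim(\cM_3/\cM_1)-\dim(\cM_3/\cM_2)\bigr),
\]
which telescopes to $\dim(\cM_3)-\dim(\cM_3/\cM_2)=\dim(\cM_2|\cM_3)$, as desired. Finiteness of each $\dim$ on the right follows from Lemma~\ref{L-dim for fg}, and isomorphism invariance of $\dim$ on finitely generated modules is built into that lemma's construction.

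The main subtlety, rather than a deep obstacle, is bookkeeping in the reduction: making sure the same directed set of finitely generated approximations computes both sides simultaneously (so that sup/inf and sum can be interchanged), and making sure that replacing $\cM_3'$ by something containing $\cM_2$ (or $\cM_2'$ by $\cM_2'+\cM_1$) does not change the relevant limits. Both are handled cleanly by Lemmas~\ref{L-decreasing} and \ref{L-increasing}, together with the correspondence between finitely generated submodules of $\cM/\cM_1$ and finitely generated submodules of $\cM$ containing $\cM_1$.
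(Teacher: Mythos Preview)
Your proof is correct and follows essentially the same approach as the paper's: both establish the finitely generated case by telescoping via the definition $\dim(\cN_1|\cN_2)=\dim(\cN_2)-\dim(\cN_2/\cN_1)$, then pass to general $\cM_3$ by taking the limit over finitely generated $\cM_3'\supseteq\cM_2$, and finally to general $\cM_2$ by taking the supremum over finitely generated $\cM_2'\supseteq\cM_1$. You present these steps as reductions while the paper builds up, but the logic and the directed sets used are identical.
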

\begin{proof} If $\cM_1, \cM_2, \cM_3$ are all finitely generated, then
\begin{align*}
\dim(\cM_2|\cM_3)&=\dim(\cM_3)-\dim(\cM_3/\cM_2)\\
&=\dim(\cM_3)-\dim(\cM_3/\cM_1)+\dim(\cM_3/\cM_1)-\dim(\cM_3/\cM_2)\\
&=\dim(\cM_1|\cM_3)+\dim(\cM_2/\cM_1|\cM_3/\cM_1).
\end{align*}

Next when $\cM_1$ and $\cM_2$ are finitely generated, we have
\begin{align*}
\dim(\cM_2|\cM_3)&=\lim_{\cM_3'\nearrow \cM_3}\dim(\cM_2|\cM_3')\\
&=\lim_{\cM_3'\nearrow \cM_3}\dim(\cM_1|\cM_3')+\lim_{\cM_3'\nearrow \cM_3}\dim(\cM_2/\cM_1|\cM_3'/\cM_1)\\
&=\dim(\cM_1|\cM_3)+\dim(\cM_2/\cM_1|\cM_3/\cM_1),
\end{align*}
where $\cM_3'$ ranges over finitely generated $R$-submodules of $\cM_3$ containing $\cM_2$ ordered by inclusion.

Now consider the case $\cM_1$ is finitely generated. We have
\begin{align*}
\dim(\cM_2|\cM_3)&=\sup_{\cM_2'}\dim(\cM_2'|\cM_3)\\
&=\dim(\cM_1|\cM_3)+\sup_{\cM_2'}\dim(\cM_2'/\cM_1|\cM_3/\cM_1)\\
&=\dim(\cM_1|\cM_3)+\dim(\cM_2/\cM_1|\cM_3/\cM_1),
\end{align*}
where $\cM_2'$ ranges over finitely generated $R$-submodules of $\cM_2$ containing $\cM_1$.
\end{proof}

\begin{lemma} \label{L-addition big fg}
For any $R$-modules $\cM_1\subseteq \cM_2$, if $\cM_2$ is finitely generated, then
$$\dim(\cM_2)=\dim(\cM_1|\cM_2)+\dim(\cM_2/\cM_1).$$
\end{lemma}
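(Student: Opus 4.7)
The plan is to extend Lemma~\ref{L-addition} by exhausting $\cM_1$ from inside by its finitely generated submodules. The key observation is that for any finitely generated submodule $\cM_1'\subseteq \cM_1$, Lemma~\ref{L-addition} applied with the triple $\cM_1'\subseteq \cM_2\subseteq \cM_2$ gives
$$\dim(\cM_2)=\dim(\cM_1'|\cM_2)+\dim(\cM_2/\cM_1').$$
Because $\cM_2$ is finitely generated, $\dim(\cM_2)$ and $\dim(\cM_2/\cM_1')$ are both finite (bounded by the rank of any free cover of $\cM_2$ via Lemma~\ref{L-dim for fg}), so I can rewrite this as $\dim(\cM_1'|\cM_2)=\dim(\cM_2)-\dim(\cM_2/\cM_1')$. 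Passing to the supremum over finitely generated $\cM_1'\nearrow \cM_1$ and using the very definition of $\dim(\cM_1|\cM_2)$ then yields
$$\dim(\cM_1|\cM_2)=\dim(\cM_2)-\inf_{\cM_1'}\dim(\cM_2/\cM_1').$$

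It then suffices to prove $\inf_{\cM_1'}\dim(\cM_2/\cM_1')=\dim(\cM_2/\cM_1)$. One direction is immediate: for every finitely generated $\cM_1'\subseteq \cM_1$ the module $\cM_2/\cM_1'$ admits $\cM_2/\cM_1$ as a quotient, and both are finitely generated, so Lemma~\ref{L-quotient fg} gives $\dim(\cM_2/\cM_1')\ge \dim(\cM_2/\cM_1)$.

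For the opposite inequality I will use the explicit presentation $\cM_2=R^n/\cK$, letting $\cM_1^\sim\subseteq R^n$ denote the preimage of $\cM_1$ so that $\cK\subseteq \cM_1^\sim$ and $\cM_2/\cM_1\cong R^n/\cM_1^\sim$. Lemma~\ref{L-dim for fg} expresses $\dim(\cM_2/\cM_1)$ as $\inf_{\cL}\dim(R^n/\cL)$ where $\cL$ ranges over finitely generated submodules of $\cM_1^\sim$. Given such an $\cL$, its image $\cM_1':=(\cL+\cK)/\cK$ is a finitely generated submodule of $\cM_1$ and $\cM_2/\cM_1'\cong R^n/(\cL+\cK)$. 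A second application of Lemma~\ref{L-dim for fg}, taking $\cL$ itself as a finitely generated submodule of $\cL+\cK$, yields $\dim(\cM_2/\cM_1')\le \dim(R^n/\cL)$, and taking the infimum over $\cL$ produces the desired bound. Combining the two inequalities finishes the argument.

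The only mildly delicate step is the $\le$ direction of the last infimum identity, which is where the hypothesis ``$\cM_2$ finitely generated'' really enters: it is what makes an explicit presentation $R^n\twoheadrightarrow \cM_2$ available and lets us match finitely generated approximations of $\cM_1$ inside $\cM_2$ with finitely generated approximations of $\cM_1^\sim$ inside $R^n$. Everything else is bookkeeping with the already-established continuity and the finite-generation case of additivity.
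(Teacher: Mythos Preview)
Your proof is correct and uses the same ingredients as the paper's: Lemma~\ref{L-addition} applied to a finitely generated $\cM_1'\subseteq \cM_1$, a free presentation $R^n\twoheadrightarrow \cM_2$, and Lemmas~\ref{L-dim for fg} and \ref{L-quotient fg} to control the quotients. The organization differs slightly: you isolate the clean intermediate identity $\inf_{\cM_1'}\dim(\cM_2/\cM_1')=\dim(\cM_2/\cM_1)$ and prove it directly, whereas the paper runs an $\varepsilon$-argument that simultaneously chooses a good $\cM_1'$ (close to realizing $\dim(\cM_1|\cM_2)$) and a good finitely generated piece of the kernel upstairs, invoking Lemma~\ref{L-addition} a second time with the triple $\cM_1'\subseteq \cM_1\subseteq \cM_2$. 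Your route avoids that second application and is arguably more transparent; the paper's route makes the two inequalities appear more symmetrically but at the cost of the extra bookkeeping. Either way the substance is the same.
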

\begin{proof} Take a surjective map $\alpha: R^m\rightarrow \cM_2$ for some $m\in \Nb$. Denote by $\cM_1^*$ the preimage of $\cM_1$ in $R^m$. Let $\varepsilon>0$.
Take a finitely generated $R$-submodule $\cM_3'$ of $\cM_1^*$ with
\begin{align} \label{E-addition big fg1}
\dim(R^m/\cM_3')\le \dim(\cM_2/\cM_1)+\varepsilon.
\end{align}
Also take a finitely generated $R$-submodule $\cM_1'$ of $\cM_1$ containing $(\cM_3')\alpha$ such that
\begin{align} \label{E-addition big fg2}
 \dim(\cM_1|\cM_2)\le \dim(\cM_1'|\cM_2)+\varepsilon.
 \end{align}
By Lemma~\ref{L-addition} we have
\begin{align} \label{E-addition big fg3}
\dim(\cM_2)=\dim(\cM_1'|\cM_2)+\dim(\cM_2/\cM_1'),
\end{align}
and
\begin{align} \label{E-addition big fg4}
 \dim(\cM_1|\cM_2)=\dim(\cM_1'|\cM_2)+\dim(\cM_1/\cM_1'|\cM_2/\cM_1').
\end{align}
Then
\begin{align*}
\dim(\cM_2)&\overset{\eqref{E-addition big fg3}}{=}\dim(\cM_1'|\cM_2)+\dim(\cM_2/\cM_1')\\
&\ge \dim(\cM_1'|\cM_2)+\dim(\cM_2/\cM_1)\\
&\overset{\eqref{E-addition big fg2}}{\ge} \dim(\cM_1|\cM_2)-\varepsilon+\dim(\cM_2/\cM_1),
\end{align*}
where in the first inequality we apply Lemma~\ref{L-quotient fg}.
We also have
\begin{align*}
\dim(\cM_1|\cM_2)+\dim(\cM_2/\cM_1)&\overset{\eqref{E-addition big fg1}}{\ge} \dim(\cM_1|\cM_2)+\dim(R^m/\cM_3')-\varepsilon \\
&\ge \dim(\cM_1|\cM_2)+\dim(\cM_2/\cM_1')-\varepsilon \\
&\overset{\eqref{E-addition big fg4}}{=} \dim(\cM_1'|\cM_2)+\dim(\cM_1/\cM_1'|\cM_2/\cM_1')\\
& \quad \quad \quad +\dim(\cM_2/\cM_1')-\varepsilon \\
&\overset{\eqref{E-addition big fg3}}{=} \dim(\cM_2)+\dim(\cM_1/\cM_1'|\cM_2/\cM_1')-\varepsilon\\
&\ge \dim(\cM_2)-\varepsilon,
\end{align*}
where in the second inequality we apply Lemma~\ref{L-quotient fg} again.
Letting $\varepsilon\to 0$ we obtain the desired equality.
\end{proof}

\begin{lemma} \label{L-addition big fg2}
For any $R$-modules $\cM_1\subseteq \cM_2\subseteq \cM_3$, if $\cM_3$ is finitely generated, then
$$\dim(\cM_2|\cM_3)=\dim(\cM_1|\cM_3)+\dim(\cM_2/\cM_1|\cM_3/\cM_1).$$
\end{lemma}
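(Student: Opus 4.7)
The plan is to reduce the identity to three applications of Lemma~\ref{L-addition big fg}, which already handles the case when the ambient module is finitely generated. The key observation I would exploit is that with $\cM_3$ finitely generated, both quotients $\cM_3/\cM_1$ and $\cM_3/\cM_2$ are finitely generated too (as quotients of $\cM_3$), so Lemma~\ref{L-addition big fg} becomes available for every pair in play.

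Concretely, I would first apply Lemma~\ref{L-addition big fg} to each of the inclusions $\cM_2\subseteq \cM_3$ and $\cM_1\subseteq \cM_3$ to express $\dim(\cM_3)$ in two different ways, namely
\[\dim(\cM_3)=\dim(\cM_2|\cM_3)+\dim(\cM_3/\cM_2)=\dim(\cM_1|\cM_3)+\dim(\cM_3/\cM_1).\]
Next I would apply the same lemma to the inclusion $\cM_2/\cM_1\subseteq \cM_3/\cM_1$, using the canonical isomorphism $(\cM_3/\cM_1)/(\cM_2/\cM_1)\cong \cM_3/\cM_2$ and the isomorphism invariance built into Definition~\ref{D-bivariant}(1), to get
\[\dim(\cM_3/\cM_1)=\dim(\cM_2/\cM_1|\cM_3/\cM_1)+\dim(\cM_3/\cM_2).\]
Substituting this into the second expression for $\dim(\cM_3)$ and comparing with the first expression gives the target equality, modulo cancelling $\dim(\cM_3/\cM_2)$ from both sides.

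The one place where I would need to pause is justifying that cancellation, since Definition~\ref{D-bivariant} permits the value $+\infty$ in principle. However, because $\cM_3/\cM_2$ is finitely generated, Lemma~\ref{L-dim for fg} realizes $\dim(\cM_3/\cM_2)$ as an infimum of numbers $\dim(\cN/\cN')$ taken over finitely presented quotients of a fixed finitely generated free module $\cN$, each of which is bounded above by $\dim(\cN)<\infty$ via condition (3) of Definition~\ref{D-module}. Hence $\dim(\cM_3/\cM_2)$ is finite and the cancellation is legitimate. I expect this finiteness check to be the only non-routine step; everything else is bookkeeping on top of Lemma~\ref{L-addition big fg}.
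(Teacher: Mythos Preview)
Your proof is correct and matches the paper's approach exactly: three applications of Lemma~\ref{L-addition big fg} followed by the same cancellation, with the paper leaving the finiteness of $\dim(\cM_3/\cM_2)$ implicit where you spell it out. One minor citation point: at this stage of Section~\ref{S-bivariant} we are still constructing $\dim(\cdot|\cdot)$ and have not yet verified Definition~\ref{D-bivariant}, so the isomorphism invariance you invoke should be drawn from Lemma~\ref{L-dim for fg} (or the evident invariance of the construction) rather than from the axiom.
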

\begin{proof} By Lemma~\ref{L-addition big fg} we have
\begin{eqnarray*}
& & \dim(\cM_1|\cM_3)+\dim(\cM_2/\cM_1|\cM_3/\cM_1)\\
&=&\dim(\cM_3)-\dim(\cM_3/\cM_1)+\dim(\cM_3/\cM_1)-\dim(\cM_3/\cM_2)\\
&=&\dim(\cM_3)-\dim(\cM_3/\cM_2)\\
&=&\dim(\cM_2|\cM_3).
\end{eqnarray*}
\end{proof}

\begin{lemma} \label{L-uniform control}
Let $\cM_1\subseteq \cM_2\subseteq \cM_3\subseteq \cM_4$ be $R$-modules such that $\cM_2$ is finitely generated. Then
\begin{align} \label{E-uniform control1}
 \dim(\cM_1|\cM_3)-\dim(\cM_1|\cM_4)\le \dim(\cM_2|\cM_3)-\dim(\cM_2|\cM_4).
 \end{align}
\end{lemma}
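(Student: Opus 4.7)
The plan is to remove the finite-generation hypotheses one layer at a time: first treating the fully finitely generated case, then allowing $\cM_3,\cM_4$ to be arbitrary, and finally allowing $\cM_1$ to be arbitrary as well.

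Assume first that all four of $\cM_1,\cM_2,\cM_3,\cM_4$ are finitely generated. Then Lemma~\ref{L-addition big fg2} applied to each of the chains $\cM_1\subseteq\cM_2\subseteq\cM_3$ and $\cM_1\subseteq\cM_2\subseteq\cM_4$ gives
$$\dim(\cM_2|\cM_3)-\dim(\cM_1|\cM_3)=\dim(\cM_2/\cM_1|\cM_3/\cM_1),$$
$$\dim(\cM_2|\cM_4)-\dim(\cM_1|\cM_4)=\dim(\cM_2/\cM_1|\cM_4/\cM_1).$$
Hence \eqref{E-uniform control1} reduces to $\dim(\cM_2/\cM_1|\cM_4/\cM_1)\le\dim(\cM_2/\cM_1|\cM_3/\cM_1)$, which is exactly Lemma~\ref{L-decreasing} applied to the finitely generated chain $\cM_2/\cM_1\subseteq\cM_3/\cM_1\subseteq\cM_4/\cM_1$.

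Keeping $\cM_1,\cM_2$ finitely generated but allowing $\cM_3,\cM_4$ to be arbitrary, I would fix a finitely generated $\cM_3'\subseteq\cM_3$ containing $\cM_2$. For each finitely generated $\cM_4'\subseteq\cM_4$ with $\cM_3'\subseteq\cM_4'$, the preceding case applied to $\cM_1\subseteq\cM_2\subseteq\cM_3'\subseteq\cM_4'$ yields
$$\dim(\cM_1|\cM_3')-\dim(\cM_1|\cM_4')\le\dim(\cM_2|\cM_3')-\dim(\cM_2|\cM_4').$$
The $\cM_4'$ containing $\cM_3'$ form a cofinal subfamily of the directed set of finitely generated submodules of $\cM_4$ containing $\cM_1$ (and likewise those containing $\cM_2$), and Lemma~\ref{L-decreasing} shows that the infimum defining $\dim(\cM_j|\cM_4)$ is unchanged if restricted to this cofinal subfamily. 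Passing to the limit $\cM_4'\nearrow\cM_4$ and then $\cM_3'\nearrow\cM_3$ preserves the inequality and yields \eqref{E-uniform control1} in this intermediate case.

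For the general case only $\cM_2$ is required to be finitely generated. Set $c=\dim(\cM_2|\cM_3)-\dim(\cM_2|\cM_4)$. For any finitely generated $\cM_1'\subseteq\cM_1$, the chain $\cM_1'\subseteq\cM_2\subseteq\cM_3\subseteq\cM_4$ falls under the previous step, giving
$$\dim(\cM_1'|\cM_3)\le\dim(\cM_1'|\cM_4)+c\le\dim(\cM_1|\cM_4)+c,$$
where the second inequality uses monotonicity of $\dim(\,\cdot\,|\cM_4)$ in its first argument, which follows from Lemma~\ref{L-increasing} together with the supremum definition. Taking the supremum over $\cM_1'$ on the left then yields \eqref{E-uniform control1}. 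The main technical obstacle is the cofinality bookkeeping in the middle step (justifying that the infimum is unchanged after restriction); everything else is a mechanical reduction to the finitely generated case via the additivity and monotonicity lemmas already established.
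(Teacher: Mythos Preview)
Your proof is correct and follows essentially the same strategy as the paper: reduce to monotonicity of $\dim(\cM_2/\cM_1|\cdot)$ via additivity, then pass to the limit over finitely generated submodules of $\cM_1$. The paper is shorter only because it invokes Lemma~\ref{L-addition} (which needs just the \emph{smallest} module finitely generated) rather than Lemma~\ref{L-addition big fg2}, so arbitrary $\cM_3,\cM_4$ are handled immediately and your middle limit step becomes unnecessary.
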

\begin{proof}
Since $\cM_2$ is finitely generated, for $j=3, 4$ by Lemma~\ref{L-increasing} we have
$$0\le \dim(\cM_1|\cM_j)\le \dim(\cM_2|\cM_j)\le \dim(\cM_2)<+\infty.$$
Thus all the four dimensions appearing in \eqref{E-uniform control1} are finite.

Consider first the case $\cM_1$ and $\cM_2$  are both finitely generated. By Lemma~\ref{L-addition}  we have
\begin{align*}
\dim(\cM_2|\cM_3)-\dim(\cM_1|\cM_3)&= \dim(\cM_2/\cM_1|\cM_3/\cM_1)\\
&\ge \dim(\cM_2/\cM_1|\cM_4/\cM_1)\\
&=\dim(\cM_2|\cM_4)-\dim(\cM_1|\cM_4),
\end{align*}
and hence
$$ \dim(\cM_1|\cM_3)-\dim(\cM_1|\cM_4)\le \dim(\cM_2|\cM_3)-\dim(\cM_2|\cM_4).$$
Taking limits over finitely generated $R$-submodules of $\cM_1$, we see that the above inequality also holds when $\cM_2$  is finitely generated.
\end{proof}

\begin{proposition} \label{P-fg bound}
For any $R$-modules $\cM_1\subseteq \cM_2\subseteq \cM_3$, if $\cM_2$ is finitely generated, then
$$ \dim(\cM_1|\cM_3)=\inf_{\cM'}\dim(\cM_1|\cM'),$$
where $\cM'$ ranges over finitely generated $R$-submodules of $\cM_3$ containing $\cM_2$.
\end{proposition}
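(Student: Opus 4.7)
The plan is to prove the claimed equality by two inequalities, both of which should reduce to already-established results in the section.

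For the easy inequality $\dim(\cM_1|\cM_3)\le \inf_{\cM'}\dim(\cM_1|\cM')$, I will use the definition of $\dim(\cM_1|\cM_3)$ as a supremum over finitely generated submodules. Given any finitely generated $\cM_1'\subseteq \cM_1$, it sits inside every $\cM'$ in the infimum (since $\cM_1'\subseteq \cM_1\subseteq \cM_2\subseteq \cM'$), so by the very definition of $\dim(\cM_1'|\cM_3)$ as an infimum over finitely generated submodules of $\cM_3$ containing $\cM_1'$, we have $\dim(\cM_1'|\cM_3)\le \dim(\cM_1'|\cM')\le \dim(\cM_1|\cM')$ for each such $\cM'$. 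Taking the supremum over $\cM_1'$ on the left and the infimum over $\cM'$ on the right completes this direction.

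The substantive direction is $\inf_{\cM'}\dim(\cM_1|\cM')\le \dim(\cM_1|\cM_3)$, and here the key tool is Lemma~\ref{L-uniform control}. Fix $\varepsilon>0$. Since $\cM_2$ is finitely generated, by the definition of $\dim(\cM_2|\cM_3)$ we may pick a finitely generated $\cM'\subseteq \cM_3$ with $\cM_2\subseteq \cM'$ and $\dim(\cM_2|\cM')-\dim(\cM_2|\cM_3)\le \varepsilon$. Then for every finitely generated $\cM_1'\subseteq \cM_1$, Lemma~\ref{L-uniform control} applied to the chain $\cM_1'\subseteq \cM_2\subseteq \cM'\subseteq \cM_3$ gives
$$\dim(\cM_1'|\cM')-\dim(\cM_1'|\cM_3)\le \dim(\cM_2|\cM')-\dim(\cM_2|\cM_3)\le \varepsilon.$$
Hence $\dim(\cM_1'|\cM')\le \dim(\cM_1'|\cM_3)+\varepsilon\le \dim(\cM_1|\cM_3)+\varepsilon$. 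Taking the supremum over $\cM_1'$ yields $\dim(\cM_1|\cM')\le \dim(\cM_1|\cM_3)+\varepsilon$, and therefore $\inf_{\cM'}\dim(\cM_1|\cM')\le \dim(\cM_1|\cM_3)+\varepsilon$. Letting $\varepsilon\to 0$ finishes the proof.

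A small point I will need to verify is that all the differences above are well defined (i.e., finite). This is immediate: $\cM_1\subseteq \cM_2$ with $\cM_2$ finitely generated implies, via Lemma~\ref{L-increasing} applied to each finitely generated $\cM_1'\subseteq \cM_1\subseteq \cM_2$, that $\dim(\cM_1|\cM_3)\le \dim(\cM_2|\cM_3)\le \dim(\cM_2)<+\infty$, and the same bound applies with $\cM_3$ replaced by any $\cM'$. So no step is ambiguous. I don't anticipate a real obstacle here; the only delicate choice is framing the argument so that the uniform control lemma is applied one finitely generated $\cM_1'$ at a time, with the same $\cM'$ working uniformly, which is exactly the content of the lemma.
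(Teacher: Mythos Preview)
Your proof is correct and follows essentially the same approach as the paper: choose a finitely generated $\cM'\supseteq \cM_2$ making $\dim(\cM_2|\cM')-\dim(\cM_2|\cM_3)\le\varepsilon$, then invoke Lemma~\ref{L-uniform control}. The only (harmless) redundancy is that Lemma~\ref{L-uniform control} already applies to an arbitrary $\cM_1$ (only $\cM_2$ need be finitely generated), so you can apply it directly to $\cM_1\subseteq\cM_2\subseteq\cM'\subseteq\cM_3$ without passing through finitely generated $\cM_1'$ and taking a supremum afterward.
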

\begin{proof}
Let $\varepsilon>0$.
Take a finitely generated $R$-submodule $\cM'$ of $\cM_3$ containing $\cM_2$ such that
$$\dim(\cM_2|\cM')\le \dim(\cM_2|\cM_3)+\varepsilon.$$
By Lemma~\ref{L-uniform control} we have
$$\dim(\cM_1|\cM')-\dim(\cM_1|\cM_3)\le \dim(\cM_2|\cM')-\dim(\cM_2|\cM_3)\le \varepsilon.$$
\end{proof}

\begin{lemma} \label{L-exact for relative1}
For any exact sequence
$$\cM_1\overset{\alpha}{\rightarrow} \cM_2\rightarrow \cM_3\rightarrow 0$$
of finitely generated $R$-modules, we have $\dim(\cM_2)\le \dim(\cM_1)+\dim(\cM_3)$.
\end{lemma}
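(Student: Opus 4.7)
The plan is to reduce the claim to a short chain of inequalities using Lemma~\ref{L-addition big fg} together with Lemma~\ref{L-quotient fg}. Set $\cN := (\cM_1)\alpha \subseteq \cM_2$; by exactness of the sequence, $\cM_2/\cN \cong \cM_3$, and $\cN$ is finitely generated as the image of the finitely generated module $\cM_1$.

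First I would apply Lemma~\ref{L-addition big fg} to the pair $\cN \subseteq \cM_2$, which is legal because $\cM_2$ is finitely generated by hypothesis. This gives
$$\dim(\cM_2) = \dim(\cN|\cM_2) + \dim(\cM_2/\cN) = \dim(\cN|\cM_2) + \dim(\cM_3),$$
so it suffices to show $\dim(\cN|\cM_2) \le \dim(\cM_1)$.

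Next I would control the bivariant term in two elementary steps. Since $\cN$ is finitely generated, $\dim(\cN|\cM_2)$ is by definition the infimum of $\dim(\cN|\cM_2')$ over finitely generated submodules $\cM_2' \subseteq \cM_2$ containing $\cN$; taking $\cM_2' = \cN$ yields $\dim(\cN|\cM_2) \le \dim(\cN|\cN) = \dim(\cN)$. Then, because $\cN$ is a quotient of $\cM_1$ via $\alpha$, Lemma~\ref{L-quotient fg} gives $\dim(\cN) \le \dim(\cM_1)$. Combining with the displayed equality above produces $\dim(\cM_2) \le \dim(\cM_1) + \dim(\cM_3)$.

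There is no serious obstacle: the genuine work has already been done in Lemmas~\ref{L-addition big fg} and \ref{L-quotient fg}. The only points worth being careful about are that one uses the image $\cN$ (not $\cM_1$ itself) so that it actually sits inside $\cM_2$, and that one verifies $\cN$ is finitely generated so the infimum in the definition of $\dim(\cN|\cM_2)$ includes the candidate $\cM_2' = \cN$.
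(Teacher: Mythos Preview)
Your proof is correct and follows essentially the same route as the paper's. The only cosmetic difference is that the paper obtains $\dim(\cM_2)=\dim(\im(\alpha)|\cM_2)+\dim(\cM_3)$ directly from the defining formula $\dim(\cN|\cM_2):=\dim(\cM_2)-\dim(\cM_2/\cN)$ for finitely generated $\cN\subseteq\cM_2$, rather than invoking the more general Lemma~\ref{L-addition big fg}; the remaining two steps (bounding $\dim(\cN|\cM_2)\le\dim(\cN)$ via the infimum and then $\dim(\cN)\le\dim(\cM_1)$ via Lemma~\ref{L-quotient fg}) are identical.
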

\begin{proof}
We have
\begin{align*}
\dim(\cM_2)
&=\dim(\im(\alpha)|\cM_2)+\dim(\cM_3)\\
&\le \dim(\im(\alpha))+\dim(\cM_3)\\
&\le  \dim(\cM_1)+\dim(\cM_3),
\end{align*}
where in the last inequality we apply Lemma~\ref{L-quotient fg}.
\end{proof}

\begin{proposition} \label{P-sum for relative}
For any $R$-modules $\cM_1, \cM_2\subseteq \cM$, we have
\begin{align} \label{E-sum for relative}
\dim(\cM_1+\cM_2|\cM)+\dim(\cM_1\cap \cM_2|\cM)\le \dim(\cM_1|\cM)+\dim(\cM_2|\cM).
\end{align}
\end{proposition}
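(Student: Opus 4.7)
The plan is to prove \eqref{E-sum for relative} in three stages of increasing generality, reducing first to the case when $\cM$ is finitely generated, where the inequality will follow from a Mayer--Vietoris style exact sequence.

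\textbf{Stage 1:} $\cM$ finitely generated. Here Lemma~\ref{L-addition big fg} rewrites each $\dim(\cL|\cM)$ as $\dim(\cM)-\dim(\cM/\cL)$, and after cancelling $2\dim(\cM)$, \eqref{E-sum for relative} becomes
$$\dim(\cM/\cM_1)+\dim(\cM/\cM_2)\le \dim(\cM/(\cM_1\cap\cM_2))+\dim(\cM/(\cM_1+\cM_2)).$$
I would verify the exactness of the sequence of finitely generated $R$-modules
$$\cM/(\cM_1\cap\cM_2)\overset{x\mapsto(x,x)}{\longrightarrow}\cM/\cM_1\oplus\cM/\cM_2\overset{(y,z)\mapsto y-z}{\longrightarrow}\cM/(\cM_1+\cM_2)\to 0,$$
the nontrivial point being that if $y-z=n_1+n_2$ with $n_j\in\cM_j$, then $y-n_1=z+n_2\in\cM$ is a common lift of $(y+\cM_1,z+\cM_2)$ through the first map, and then apply Lemma~\ref{L-exact for relative1} to the middle term together with the direct-sum additivity of $\dim$.

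\textbf{Stage 2:} $\cM_1,\cM_2$ finitely generated, $\cM$ arbitrary. For every finitely generated $R$-submodule $\cM'$ of $\cM$ containing $\cM_1+\cM_2$, Stage 1 applied inside $\cM'$ gives
$$\dim(\cM_1+\cM_2|\cM')+\dim(\cM_1\cap\cM_2|\cM')\le \dim(\cM_1|\cM')+\dim(\cM_2|\cM').$$
Letting $\cM'\nearrow\cM$, the three terms whose first argument is finitely generated converge to their values for $\cM$ by Proposition~\ref{P-fg bound}; for the intersection term I would note that every finitely generated $\cL\subseteq\cM_1\cap\cM_2$ satisfies $\dim(\cL|\cM')\ge\dim(\cL|\cM)$ directly from the infimum in Definition~\ref{D-bivariant}(5), whence $\dim(\cM_1\cap\cM_2|\cM')\ge\dim(\cM_1\cap\cM_2|\cM)$, which is the direction needed to pass to the $\le$ inequality in the limit.

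\textbf{Stage 3:} General $\cM_1,\cM_2$. I would approximate by finitely generated $\cM_j'\subseteq\cM_j$ and apply Stage 2 to each pair, obtaining a directed family of inequalities in which all four dimensions are monotone increasing in $(\cM_1',\cM_2')$. Taking suprema, Definition~\ref{D-bivariant}(4) recovers $\dim(\cM_j|\cM)$ and $\dim(\cM_1+\cM_2|\cM)$; for the intersection, every finitely generated $\cN\subseteq\cM_1\cap\cM_2$ lies inside some $\cM_1'\cap\cM_2'$ (enlarge each $\cM_j'$ to contain $\cN$), so together with monotonicity of $\dim(\cdot|\cM)$ in the first argument, $\sup_{(\cM_1',\cM_2')}\dim(\cM_1'\cap\cM_2'|\cM)=\dim(\cM_1\cap\cM_2|\cM)$, and since the supremum of a sum of monotone-increasing nonnegative quantities equals the sum of the suprema, \eqref{E-sum for relative} follows. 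The main obstacle will be this intersection term: $\cM_1\cap\cM_2$ is not generally finitely generated even when $\cM_1,\cM_2$ are, and even $\cM_1'\cap\cM_2'$ need not be, forcing one to use the supremum-over-finitely-generated-submodules definition of $\dim(\cdot|\cM)$ at two different scales together with the cofinality observation above; the Mayer--Vietoris step of Stage 1 is otherwise routine.
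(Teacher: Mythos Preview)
Your proof is correct and follows essentially the same approach as the paper: reduce to finitely generated $\cM_1,\cM_2$ via the continuity condition (4), then to finitely generated $\cM$ via Proposition~\ref{P-fg bound}, and finish with the Mayer--Vietoris exact sequence combined with Lemmas~\ref{L-direct sum} and \ref{L-exact for relative1}. The paper presents the reductions in the opposite order (general $\to$ fg $\cM_1,\cM_2$ $\to$ fg $\cM$) and handles the intersection term in your Stage~2 more tersely by invoking Proposition~\ref{P-fg bound} directly (which applies since $\cM_1\cap\cM_2\subseteq\cM_1+\cM_2$ with the latter finitely generated), but the substance is identical.
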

\begin{proof}
Note that
$$ \dim(\cM_1+\cM_2|\cM)+\dim(\cM_1\cap \cM_2|\cM)=\sup_{\cM_1^\sharp, \cM_2^\sharp}\big(\dim(\cM_1^\sharp+\cM_2^\sharp|\cM)+\dim(\cM_1^\sharp\cap \cM_2^\sharp|\cM)\big)$$
and
$$\dim(\cM_1|\cM)+\dim(\cM_2|\cM)=\sup_{\cM_1^\sharp, \cM_2^\sharp}\big(\dim(\cM_1^\sharp|\cM)+\dim(\cM_2^\sharp|\cM)\big)$$
for $\cM_1^\sharp$ and $\cM_2^\sharp$ ranging over finitely generated $R$-submodules of $\cM_1$ and $\cM_2$ respectively.
Thus it suffices to prove \eqref{E-sum for relative} when $\cM_1$ and $\cM_2$ are finitely generated.
Then by Proposition~\ref{P-fg bound} we may also assume that $\cM$ is finitely generated.

By Lemma~\ref{L-addition big fg} we have
\begin{eqnarray*}
& &\dim(\cM_1+\cM_2|\cM)+\dim(\cM_1\cap \cM_2|\cM)\\
&=&\dim(\cM)-\dim(\cM/(\cM_1+\cM_2))+\dim(\cM)-\dim(\cM/(\cM_1\cap \cM_2)),
\end{eqnarray*}
and
\begin{align*}
\dim(\cM_1|\cM)+\dim(\cM_2|\cM)=\dim(\cM)-\dim(\cM/\cM_1)+\dim(\cM)-\dim(\cM/\cM_2).
\end{align*}
Thus it suffices to show
$$ \dim(\cM/\cM_1)+\dim(\cM/\cM_2)\le \dim(\cM/(\cM_1+\cM_2))+\dim(\cM/(\cM_1\cap \cM_2)).$$
Note that we have the exact sequence
\begin{align*}
0\rightarrow \cM/(\cM_1\cap \cM_2)\overset{\alpha}{\longrightarrow} \cM/\cM_1\oplus \cM/\cM_2
&\overset{\beta}{\longrightarrow} \cM/(\cM_1+\cM_2)\rightarrow 0,
\end{align*}
where $(z+\cM_1\cap \cM_2)\alpha=(z+\cM_1, z+\cM_2)$ and $(x+\cM_1, y+\cM_2)\beta=x-y+\cM_1+\cM_2$.
Then the proposition follows from Lemmas~\ref{L-direct sum} and \ref{L-exact for relative1}.
\end{proof}

\begin{proposition} \label{P-hom for relative}
For any $R$-modules $\cM_1\subseteq \cM_2$ and $\cM$, if $\alpha$ is a  map $\cM_2\rightarrow \cM$, then
$$\dim((\cM_1)\alpha|(\cM_2)\alpha)\le \dim(\cM_1|\cM_2).$$
\end{proposition}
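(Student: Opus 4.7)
The plan is to reduce to the case where both $\cM_1$ and $\cM_2$ are finitely generated, and then combine Proposition~\ref{P-sum for relative} with the partial additivity of Lemma~\ref{L-addition big fg2}.

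I would first carry out the reduction in two steps. To deal with $\cM_1$, note that by construction $\dim((\cM_1)\alpha|(\cM_2)\alpha)$ is the supremum of $\dim(\cN|(\cM_2)\alpha)$ as $\cN$ ranges over finitely generated submodules of $(\cM_1)\alpha$, and each such $\cN$ is contained in $(\cM_1')\alpha$ for some finitely generated $\cM_1' \subseteq \cM_1$. Lemma~\ref{L-increasing} then gives $\dim(\cN|(\cM_2)\alpha) \le \dim((\cM_1')\alpha|(\cM_2)\alpha)$, while $\dim(\cM_1'|\cM_2) \le \dim(\cM_1|\cM_2)$ by the sup-continuity, so it suffices to treat finitely generated $\cM_1$. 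To deal with $\cM_2$, I would invoke the other continuity: $\dim(\cM_1|\cM_2) = \inf \dim(\cM_1|\cM_2')$ over finitely generated $\cM_2' \subseteq \cM_2$ containing $\cM_1$, while on the image side $(\cM_2')\alpha$ is a finitely generated submodule of $(\cM_2)\alpha$ containing $(\cM_1)\alpha$, so $\dim((\cM_1)\alpha|(\cM_2)\alpha) \le \dim((\cM_1)\alpha|(\cM_2')\alpha)$. Thus I may assume that $\cM_1$ and $\cM_2$ are both finitely generated.

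In that case set $K := \ker(\alpha) \cap \cM_2$. The first isomorphism theorem yields an isomorphism $(\cM_2)\alpha \cong \cM_2/K$ that restricts to an isomorphism $(\cM_1)\alpha \cong (\cM_1+K)/K$, so by isomorphism invariance
$$\dim((\cM_1)\alpha|(\cM_2)\alpha) = \dim\bigl((\cM_1+K)/K \,\bigm|\, \cM_2/K\bigr).$$
Because $\cM_2$ is finitely generated, $\dim(\cM_2) < \infty$, and hence $\dim(K|\cM_2) \le \dim(\cM_2) < \infty$ by Lemma~\ref{L-increasing} applied to finitely generated submodules of $K$. Lemma~\ref{L-addition big fg2}, applied to $K \subseteq \cM_1+K \subseteq \cM_2$, now gives
$$\dim\bigl((\cM_1+K)/K \,\bigm|\, \cM_2/K\bigr) = \dim(\cM_1+K|\cM_2) - \dim(K|\cM_2),$$
and Proposition~\ref{P-sum for relative}, applied to $\cM_1$ and $K$ inside $\cM_2$, gives
$$\dim(\cM_1+K|\cM_2) + \dim(\cM_1 \cap K|\cM_2) \le \dim(\cM_1|\cM_2) + \dim(K|\cM_2).$$
Dropping the nonnegative term $\dim(\cM_1 \cap K|\cM_2)$ and subtracting the finite quantity $\dim(K|\cM_2)$ will chain these into the desired inequality.

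The main obstacle is bookkeeping: the continuity and additivity available at this stage carry asymmetric finite-generation hypotheses, so I must funnel everything through the case where $\cM_2$ itself is finitely generated, which is precisely what makes $\dim(K|\cM_2)$ finite and the subtraction step legitimate. No new estimates beyond those already in Section~\ref{S-bivariant} should be needed.
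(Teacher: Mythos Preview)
Your proposal is correct and follows essentially the same approach as the paper: reduce to $\cM_1,\cM_2$ finitely generated, then combine Proposition~\ref{P-sum for relative} (applied to $\cM_1$ and $\ker(\alpha)$ inside $\cM_2$) with the additivity of Lemma~\ref{L-addition big fg2} to pass from $\dim(\cM_1+\ker(\alpha)\mid\cM_2)$ to $\dim((\cM_1)\alpha\mid(\cM_2)\alpha)$, subtracting the finite $\dim(\ker(\alpha)\mid\cM_2)$. The paper compresses the reduction to a single sentence and writes $\ker(\alpha)$ rather than $\ker(\alpha)\cap\cM_2$ (these agree since $\alpha$ is defined on $\cM_2$), but the substance is identical.
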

\begin{proof} Clearly it suffices to consider the case $\cM_1$ and $\cM_2$ are both finitely generated. Then $\dim(\ker(\alpha)|\cM_2)\le \dim(\cM_2)<+\infty$.
Applying Proposition~\ref{P-sum for relative} to $\ker(\alpha), \cM_1\subseteq \cM_2$ and using Lemma~\ref{L-addition big fg2}, we have
\begin{align*}
\dim(\cM_1|\cM_2)+\dim(\ker(\alpha)|\cM_2)&\ge \dim(\cM_1+\ker(\alpha)|\cM_2)\\
&=\dim(\ker(\alpha)|\cM_2)+\dim((\cM_1)\alpha|(\cM_2)\alpha),
\end{align*}
and hence $\dim(\cM_1|\cM_2)\ge \dim((\cM_1)\alpha|(\cM_2)\alpha)$.
\end{proof}

\begin{lemma} \label{L-addition10}
For any $R$-modules $\cM_1\subseteq \cM_2\subseteq \cM_3$, we have
\begin{align} \label{E-addition111}
\dim(\cM_2|\cM_3)\le \dim(\cM_1|\cM_3)+\dim(\cM_2/\cM_1|\cM_3/\cM_1).
\end{align}
\end{lemma}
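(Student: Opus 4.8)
The goal is the inequality $\dim(\cM_2|\cM_3)\le \dim(\cM_1|\cM_3)+\dim(\cM_2/\cM_1|\cM_3/\cM_1)$ for arbitrary $R$-modules $\cM_1\subseteq \cM_2\subseteq \cM_3$, with no finiteness hypotheses; this is the ``$\le$'' half of Theorem~\ref{T-additivity}, the only direction not yet covered by Lemmas~\ref{L-addition} and \ref{L-addition big fg2}. The strategy is to reduce to the already-known finitely generated cases by an exhaustion argument. First I would handle the left-hand side: by the continuity axiom (definition of $\dim(\cM_2|\cM_3)$ via finitely generated submodules of $\cM_2$), it suffices to bound $\dim(\cM_2^\flat|\cM_3)$ uniformly for $\cM_2^\flat$ a finitely generated submodule of $\cM_2$. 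Fix such a $\cM_2^\flat$, and let $\cM_1^\flat$ range over finitely generated submodules of $\cM_1$; replacing $\cM_2^\flat$ by $\cM_2^\flat+\cM_1^\flat$ if necessary we may assume $\cM_1^\flat\subseteq \cM_2^\flat$.

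The second step applies Lemma~\ref{L-addition} (valid because $\cM_1^\flat$ is finitely generated) inside $\cM_3$:
$$\dim(\cM_2^\flat|\cM_3)=\dim(\cM_1^\flat|\cM_3)+\dim(\cM_2^\flat/\cM_1^\flat|\cM_3/\cM_1^\flat).$$
The first term is $\le\dim(\cM_1|\cM_3)$ by Lemma~\ref{L-increasing}. For the second term, I would use the natural surjection $\cM_3/\cM_1^\flat\twoheadrightarrow\cM_3/\cM_1$, which carries $\cM_2^\flat/\cM_1^\flat$ onto the image of $\cM_2^\flat$ in $\cM_2/\cM_1$; call that image $\cN^\flat$, a finitely generated submodule of $\cM_2/\cM_1$. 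Proposition~\ref{P-hom for relative} then gives $\dim(\cM_2^\flat/\cM_1^\flat|\cM_3/\cM_1^\flat)\ge\dim(\cN^\flat|\cM_3/\cM_1)$, which is the wrong direction. So instead I would run the $\cM_1^\flat$-exhaustion more carefully: send $\cM_1^\flat\nearrow\cM_1$ and use Proposition~\ref{P-hom for relative} applied to the quotient map $\cM_3/\cM_1^\flat\to\cM_3/\cM_1$ to compare $\dim(\cM_2^\flat/\cM_1^\flat|\cM_3/\cM_1^\flat)$ with $\dim(\cN^\flat|\cM_3/\cM_1)$ the right way around. Concretely, for the quotient map $q:\cM_3\to\cM_3/\cM_1$ one has $(\cM_2^\flat)q=\cN^\flat$ and $\ker q=\cM_1$, and applying Lemma~\ref{L-addition big fg2}-style reasoning (or directly Propositions~\ref{P-sum for relative}, \ref{P-hom for relative}) to $\cM_1^\flat,\cM_2^\flat\subseteq\cM_3$ lets one pass from the submodule level to the quotient $\cM_3/\cM_1$, bounding $\dim(\cM_2^\flat/\cM_1^\flat|\cM_3/\cM_1^\flat)$ by $\dim(\cN^\flat|\cM_3/\cM_1)+(\text{error})$ where the error is controlled by how well $\cM_1^\flat$ approximates $\cM_1$.

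The third step assembles this: for every finitely generated $\cM_2^\flat\subseteq\cM_2$ and every $\varepsilon>0$, choosing $\cM_1^\flat\subseteq\cM_1$ finitely generated and large enough (using the continuity/exhaustion definitions and Lemma~\ref{L-uniform control} to make the error term small), we get
$$\dim(\cM_2^\flat|\cM_3)\le\dim(\cM_1|\cM_3)+\dim(\cM_2/\cM_1|\cM_3/\cM_1)+\varepsilon.$$
Taking the supremum over $\cM_2^\flat$ and letting $\varepsilon\to0$ yields \eqref{E-addition111}. The main obstacle is precisely the middle step: reconciling the direction of the inequality in Proposition~\ref{P-hom for relative} (which only gives $\le$ for images, the ``wrong'' way for bounding the quotient term from above) with the need for an upper bound. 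The way through is to not quotient prematurely — keep everything inside $\cM_3$, apply the finitely-generated additivity Lemma~\ref{L-addition} and the exact-sequence estimate Lemma~\ref{L-exact for relative1}/Proposition~\ref{P-sum for relative} there, and only pass to $\cM_3/\cM_1$ at the very end after the finitely generated submodule $\cM_1^\flat$ has been chosen large enough that $\dim(\cM_1^\flat|\cM_3)$ is within $\varepsilon$ of $\dim(\cM_1|\cM_3)$, at which point Lemma~\ref{L-uniform control} controls the discrepancy between computing the relative dimension of $\cM_2^\flat/\cM_1^\flat$ in $\cM_3/\cM_1^\flat$ versus in $\cM_3/\cM_1$.
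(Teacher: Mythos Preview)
Your proposal has a genuine gap at the crucial step. You correctly identify the central obstacle: after writing $\dim(\cM_2^\flat|\cM_3)=\dim(\cM_1^\flat|\cM_3)+\dim(\cM_2^\flat/\cM_1^\flat|\cM_3/\cM_1^\flat)$ via Lemma~\ref{L-addition}, you need an \emph{upper} bound on the second term by something close to $\dim((\cM_2^\flat+\cM_1)/\cM_1\mid \cM_3/\cM_1)$, and Proposition~\ref{P-hom for relative} gives only a lower bound. But your proposed fix, invoking Lemma~\ref{L-uniform control} to ``control the discrepancy between computing the relative dimension of $\cM_2^\flat/\cM_1^\flat$ in $\cM_3/\cM_1^\flat$ versus in $\cM_3/\cM_1$,'' does not work: that lemma compares relative dimensions when the \emph{ambient} module is enlarged along an \emph{inclusion} $\cM_3\subseteq\cM_4$, whereas here the ambient is being changed by a further \emph{quotient} $\cM_3/\cM_1^\flat\twoheadrightarrow\cM_3/\cM_1$. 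If you try to push through anyway, you end up needing $\dim(\cM_2^\flat+\cM_1|\cM_3)-\dim(\cM_1|\cM_3)\le\dim((\cM_2^\flat+\cM_1)/\cM_1|\cM_3/\cM_1)$, which is exactly the inequality \eqref{E-addition111} applied to $\cM_1\subseteq\cM_2^\flat+\cM_1\subseteq\cM_3$---circular.

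The paper avoids this trap by a different reduction. Rather than approximating $\cM_1$ by finitely generated $\cM_1^\flat$ and then struggling to pass to the final quotient, it fixes finitely generated $\cM_2'\subseteq\cM_2$ and $\cM_3'\subseteq\cM_3$ with $\cM_2'\subseteq\cM_3'$, and sets $\cM^*:=\cM_3'\cap\cM_1$. This $\cM^*$ is generally \emph{not} finitely generated, but it is \emph{exactly} the kernel of the quotient map $\alpha:\cM_3\to\cM_3/\cM_1$ restricted to $\cM_3'$, so $\cM_3'/\cM^*=(\cM_3')\alpha$ and $(\cM_2'+\cM^*)/\cM^*=(\cM_2')\alpha$ on the nose, with no approximation error in passing to $\cM_3/\cM_1$. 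The non-finite-generation of $\cM^*$ is handled by Proposition~\ref{P-fg bound}: enlarge $\cM_3'$ to a finitely generated $\cM_4'$ where $\dim(\cM^*|\cM_4')\le\dim(\cM^*|\cM_3)+\varepsilon$, and then apply Lemma~\ref{L-addition big fg2} (additivity inside the finitely generated $\cM_4'$). This is the missing idea: quotient by the full intersection $\cM_3'\cap\cM_1$ rather than by a finitely generated piece of $\cM_1$.
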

\begin{proof}
Denote by $\alpha$ the quotient map  $\cM_3\rightarrow \cM_3/\cM_1$.

Let $\cM_2'$ be a finitely generated $R$-submodule of $\cM_2$ and let $\cM_3'$ be a finitely generated $R$-submodule of $\cM_3$ containing $\cM_2'$.
Put $\cM^*=\cM_3'\cap \cM_1=\cM_3'\cap \ker(\alpha)$.

Let $\varepsilon>0$.
By Proposition~\ref{P-fg bound} we can find a finitely generated $R$-submodule $\cM_4'$ of $\cM_3$ containing $\cM_3'$ such that
\begin{align} \label{E-addition10}
\dim(\cM^*|\cM_4')\le \dim(\cM^*|\cM_3)+\varepsilon.
\end{align}
We have
\begin{align*}
\dim(\cM_2'|\cM_3)&\le \dim(\cM_2'|\cM_4')\\
&\le \dim(\cM_2'+\cM^*|\cM_4')\\
&=\dim(\cM^*|\cM_4')+\dim((\cM_2'+\cM^*)/\cM^*|\cM_4'/\cM^*)\\
&\le \dim(\cM^*|\cM_4')+\dim((\cM_2'+\cM^*)/\cM^*|\cM_3'/\cM^*)\\
&=\dim(\cM^*|\cM_4')+\dim((\cM_2')\alpha|(\cM_3')\alpha)\\
&\overset{\eqref{E-addition10}}{\le} \dim(\cM^*|\cM_3)+\varepsilon+\dim((\cM_2')\alpha|(\cM_3')\alpha)\\
&\le \dim(\cM_1|\cM_3)+\varepsilon+\dim((\cM_2')\alpha|(\cM_3')\alpha),
\end{align*}
where in the first equality we apply Lemma~\ref{L-addition big fg2}. Letting $\varepsilon\to 0$, we get
$$ \dim(\cM_2'|\cM_3)\le \dim(\cM_1|\cM_3)+\dim((\cM_2')\alpha|(\cM_3')\alpha).$$
Taking infimum over $\cM_3'$,
we obtain
$$\dim(\cM_2'|\cM_3) \le \dim(\cM_1|\cM_3)+\dim((\cM_2')\alpha|\cM_3/\cM_1).$$
Taking supremum over $\cM_2'$, we get \eqref{E-addition111}.
\end{proof}

\begin{lemma} \label{L-additivity}
For any $R$-modules $\cM_1\subseteq \cM_2\subseteq \cM_3$, we have
\begin{align*}
\dim(\cM_2|\cM_3)=\dim(\cM_1|\cM_3)+\dim(\cM_2/\cM_1|\cM_3/\cM_1).
\end{align*}
\end{lemma}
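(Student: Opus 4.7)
Lemma~\ref{L-addition10} already supplies the inequality
$$\dim(\cM_2|\cM_3)\le \dim(\cM_1|\cM_3)+\dim(\cM_2/\cM_1|\cM_3/\cM_1),$$
so the plan is to establish the reverse inequality by a finite-generation reduction that decouples the contribution from $\cM_1$ and from $\cM_2/\cM_1$.

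First I would fix arbitrary finitely generated $R$-submodules $\cM_1'\subseteq \cM_1$ and $\cM_2''\subseteq \cM_2$, and set $\cM_2':=\cM_1'+\cM_2''$, which is a finitely generated $R$-submodule of $\cM_2$ containing $\cM_1'$. Because $\cM_1'$ is finitely generated, Lemma~\ref{L-addition} applies to $\cM_1'\subseteq \cM_2'\subseteq \cM_3$ and yields
$$\dim(\cM_2'|\cM_3)=\dim(\cM_1'|\cM_3)+\dim(\cM_2'/\cM_1'|\cM_3/\cM_1').$$

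Next I would transfer the second summand from the quotient by $\cM_1'$ to the quotient by $\cM_1$. Let $\pi:\cM_3/\cM_1'\to \cM_3/\cM_1$ be the natural surjection. Under $\pi$, $\cM_2'/\cM_1'$ is sent onto $(\cM_2'+\cM_1)/\cM_1=(\cM_2''+\cM_1)/\cM_1$ (using $\cM_1'\subseteq \cM_1$) and $\cM_3/\cM_1'$ is sent onto $\cM_3/\cM_1$; therefore Proposition~\ref{P-hom for relative} gives
$$\dim(\cM_2'/\cM_1'|\cM_3/\cM_1')\ge \dim\bigl((\cM_2''+\cM_1)/\cM_1\,\bigm|\,\cM_3/\cM_1\bigr).$$
Combining with the obvious bound $\dim(\cM_2|\cM_3)\ge \dim(\cM_2'|\cM_3)$ coming from the sup-definition of $\dim(\cM_2|\cM_3)$, I obtain for every such pair $\cM_1',\cM_2''$:
$$\dim(\cM_2|\cM_3)\ge \dim(\cM_1'|\cM_3)+\dim\bigl((\cM_2''+\cM_1)/\cM_1\,\bigm|\,\cM_3/\cM_1\bigr).$$

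Finally I would pass to the supremum over $\cM_1'$ and $\cM_2''$ \emph{independently}. By continuity, $\sup_{\cM_1'}\dim(\cM_1'|\cM_3)=\dim(\cM_1|\cM_3)$; and since every finitely generated submodule of $\cM_2/\cM_1$ has the form $(\cM_2''+\cM_1)/\cM_1$ for some finitely generated $\cM_2''\subseteq \cM_2$, we also have $\sup_{\cM_2''}\dim((\cM_2''+\cM_1)/\cM_1|\cM_3/\cM_1)=\dim(\cM_2/\cM_1|\cM_3/\cM_1)$. Since both summands are non-negative and the two choices are independent, the suprema add, giving the desired lower bound and completing the proof.

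The main obstacle is the clean decoupling of the two finite-generation approximations: the trick of introducing $\cM_2':=\cM_1'+\cM_2''$ is precisely what allows Lemma~\ref{L-addition} (which only requires the \emph{innermost} module to be finitely generated) to be invoked while keeping $\cM_1'$ and $\cM_2''$ independent; a secondary but necessary observation is that every finitely generated submodule of $\cM_2/\cM_1$ is realized as the image of some finitely generated $\cM_2''\subseteq \cM_2$, which ensures that the supremum on the quotient side is captured by our construction.
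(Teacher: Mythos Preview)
Your proof is correct and follows essentially the same strategy as the paper: invoke Lemma~\ref{L-addition10} for $\le$, and for $\ge$ combine Lemma~\ref{L-addition} with Proposition~\ref{P-hom for relative} (comparing the quotients by $\cM_1'$ and by $\cM_1$), then pass to the supremum.

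The only difference is cosmetic: the paper applies Lemma~\ref{L-addition} directly to $\cM_1'\subseteq \cM_2\subseteq \cM_3$ (using, as you yourself note, that only the innermost module need be finitely generated), obtaining $\dim(\cM_2|\cM_3)=\dim(\cM_1'|\cM_3)+\dim(\cM_2/\cM_1'|\cM_3/\cM_1')$ in one stroke and then taking a single supremum over $\cM_1'$. Your introduction of $\cM_2''$ and $\cM_2'=\cM_1'+\cM_2''$ is unnecessary---it works, but it adds a second approximation and a second supremum that the paper avoids.
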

\begin{proof}
By Lemma~\ref{L-addition10} it suffices to show
\begin{align} \label{E-addition1}
\dim(\cM_2|\cM_3)\ge \dim(\cM_1|\cM_3)+\dim(\cM_2/\cM_1|\cM_3/\cM_1).
\end{align}

Let $\cM_1'$ be a finitely generated $R$-submodule of $\cM_1$.
By Proposition~\ref{P-hom for relative} we have
$$ \dim(\cM_2/\cM_1'|\cM_3/\cM_1')\ge \dim(\cM_2/\cM_1|\cM_3/\cM_1).$$
From Lemma~\ref{L-addition} we get
\begin{align*}
\dim(\cM_2|\cM_3)&=\dim(\cM_1'|\cM_3)+\dim(\cM_2/\cM_1'|\cM_3/\cM_1')\\
&\ge \dim(\cM_1'|\cM_3)+\dim(\cM_2/\cM_1|\cM_3/\cM_1).
\end{align*}
Taking supremum over $\cM_1'$, we obtain \eqref{E-addition1}.
\end{proof}

This finishes the proof of Theorems~\ref{T-extension} and \ref{T-additivity}.
In particular, we conclude that Propositions~\ref{P-fg bound}, \ref{P-sum for relative} and \ref{P-hom for relative} hold for any bivariant Sylvester module rank function. In the following proposition we list a few basic properties of bivariant Sylvester module rank functions which are easy consequences of Definition~\ref{D-bivariant} and will be used frequently.

\begin{proposition} \label{P-bivariant basic}
Let $\dim(\cdot|\cdot)$ be a bivariant Sylvester module rank function for $R$. The following hold:
\begin{enumerate}
\item $\dim(\cM_1|\cM_2)$ is increasing in $\cM_1$, i.e. for any $R$-modules $\cM_1\subseteq \cM_1'\subseteq \cM_2$, one has $\dim(\cM_1|\cM_2)\le \dim(\cM_1'|\cM_2)$.
\item     $\dim(\cM_1|\cM_2)$ is decreasing in $\cM_2$, i.e. for any $R$-modules $\cM_1\subseteq \cM_2\subseteq \cM_2'$, one has $\dim(\cM_1|\cM_2)\ge \dim(\cM_1|\cM_2')$.
\item If $\cM_1$ is generated by $n$ elements for some $n\in \Nb$, then $\dim(\cM_1|\cM_2)\le n$.
\end{enumerate}
\end{proposition}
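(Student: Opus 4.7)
The three claims follow directly from the axioms in Definition~\ref{D-bivariant} together with the strong additivity established in Theorem~\ref{T-additivity}; I foresee no substantive obstacle, only a careful choice of the right property in each case.

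For (1), I would apply Theorem~\ref{T-additivity} to the chain $\cM_1\subseteq \cM_1'\subseteq \cM_2$ to obtain
$$\dim(\cM_1'|\cM_2)=\dim(\cM_1|\cM_2)+\dim(\cM_1'/\cM_1|\cM_2/\cM_1),$$
and since the second summand is nonnegative, the desired inequality is immediate.

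For (2), I would first reduce to the case where $\cM_1$ is finitely generated via the continuity axiom (4) of Definition~\ref{D-bivariant}, writing both $\dim(\cM_1|\cM_2)$ and $\dim(\cM_1|\cM_2')$ as suprema over finitely generated $R$-submodules $\cM_1''\subseteq \cM_1$. For each such $\cM_1''$, axiom (5) expresses $\dim(\cM_1''|\cM_2)$, respectively $\dim(\cM_1''|\cM_2')$, as an infimum over finitely generated $R$-submodules of $\cM_2$, respectively $\cM_2'$, containing $\cM_1''$. Since $\cM_2\subseteq \cM_2'$, the collection of admissible modules for $\cM_2'$ contains that for $\cM_2$, so the infimum over the former is at most the infimum over the latter, giving $\dim(\cM_1''|\cM_2')\le \dim(\cM_1''|\cM_2)$. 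Taking suprema over $\cM_1''$ then yields (2).

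For (3), I would first apply (2) to the chain $\cM_1\subseteq \cM_1\subseteq \cM_2$ to get $\dim(\cM_1|\cM_2)\le \dim(\cM_1)$, so it suffices to show $\dim(\cM_1)\le n$. A choice of $n$ generators furnishes a surjection $R^n\to \cM_1$, so $\cM_1\cong R^n/\cM^*$ for some submodule $\cM^*$ of $R^n$. By the isomorphism invariance (axiom (1)), $\dim(\cM_1)=\dim(R^n/\cM^*)$, and additivity (axiom (6)) applied to $\cM^*\subseteq R^n$ gives
$$\dim(R^n)=\dim(\cM^*|R^n)+\dim(R^n/\cM^*)\ge \dim(R^n/\cM^*),$$
so $\dim(\cM_1)\le \dim(R^n)$. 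Iterating direct sum (axiom (3)) together with normalization (axiom (2)) gives $\dim(R^n)=n$, completing the argument.
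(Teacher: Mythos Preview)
Your argument is correct. The paper does not give a proof, stating only that the properties are ``easy consequences of Definition~\ref{D-bivariant}'', so there is little to compare at the level of technique.

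One small remark on (1): your appeal to Theorem~\ref{T-additivity} works, but is more than is needed. Axiom~(4) alone already gives the result: since every finitely generated submodule of $\cM_1$ is also a finitely generated submodule of $\cM_1'$, the supremum defining $\dim(\cM_1'|\cM_2)$ is taken over a larger index set than that defining $\dim(\cM_1|\cM_2)$, whence the inequality. This mirrors exactly the argument you gave for~(2) using axiom~(5), and is presumably what the paper has in mind when it says the proposition follows from Definition~\ref{D-bivariant} alone. Your proofs of~(2) and~(3) match what the axioms naturally suggest.
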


We record the following result which will be used in the proof of Theorem~\ref{T-range}.

\begin{proposition} \label{P-stable}
Let $\dim(\cdot|\cdot)$ be a bivariant Sylvester module rank function for $R$. Let $\cM_1\subseteq \cM_2\subseteq \cM_3\subseteq \cM_4$ be $R$-modules with $\dim(\cM_2|\cM_3)<+\infty$. Then
\begin{align} \label{E-stable}
\dim(\cM_1|\cM_3)-\dim(\cM_1|\cM_4)\le \dim(\cM_2|\cM_3)-\dim(\cM_2|\cM_4).
\end{align}
In particular, if $\dim(\cM_2|\cM_3)=\dim(\cM_2|\cM_4)<+\infty$, then $\dim(\cM_1|\cM_3)=\dim(\cM_1|\cM_4)$.
\end{proposition}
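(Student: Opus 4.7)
The plan is to reduce this to the (now-proved) full additivity Theorem~\ref{T-additivity} plus the monotonicity statements in Proposition~\ref{P-bivariant basic}. The hypothesis $\dim(\cM_2|\cM_3)<+\infty$ is there precisely to guarantee that all the relevant quantities are finite, so that we may legitimately subtract.

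First I would use Proposition~\ref{P-bivariant basic}(1) and (2) to observe that
\[
\dim(\cM_1|\cM_4)\le \dim(\cM_1|\cM_3)\le \dim(\cM_2|\cM_3)<+\infty
\quad\text{and}\quad
\dim(\cM_2|\cM_4)\le \dim(\cM_2|\cM_3)<+\infty,
\]
so that each of the four quantities in \eqref{E-stable} is a finite nonnegative real.

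Next I would apply Theorem~\ref{T-additivity} to the chain $\cM_1\subseteq \cM_2\subseteq \cM_j$ for $j=3,4$, obtaining
\[
\dim(\cM_2|\cM_j)=\dim(\cM_1|\cM_j)+\dim(\cM_2/\cM_1|\cM_j/\cM_1).
\]
Since the left sides and the first summand on the right are finite, so is $\dim(\cM_2/\cM_1|\cM_j/\cM_1)$, and we may rewrite each equation as
\[
\dim(\cM_2|\cM_j)-\dim(\cM_1|\cM_j)=\dim(\cM_2/\cM_1|\cM_j/\cM_1).
\]
Because $\cM_3/\cM_1\subseteq \cM_4/\cM_1$, Proposition~\ref{P-bivariant basic}(2) yields
\[
\dim(\cM_2/\cM_1|\cM_3/\cM_1)\ge \dim(\cM_2/\cM_1|\cM_4/\cM_1),
\]
and combining the last two displays gives exactly \eqref{E-stable}.

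For the final ``in particular'' clause, if $\dim(\cM_2|\cM_3)=\dim(\cM_2|\cM_4)<+\infty$ then \eqref{E-stable} forces $\dim(\cM_1|\cM_3)\le \dim(\cM_1|\cM_4)$, while Proposition~\ref{P-bivariant basic}(2) gives the reverse inequality, so equality holds. I do not anticipate a real obstacle here; the only subtle point is the bookkeeping of finiteness, which is handled entirely by the opening monotonicity estimates.
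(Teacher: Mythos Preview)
Your proof is correct and follows essentially the same route as the paper: both apply Theorem~\ref{T-additivity} to the chains $\cM_1\subseteq \cM_2\subseteq \cM_j$ for $j=3,4$ and then use the monotonicity in the second variable (Proposition~\ref{P-bivariant basic}(2)) to compare $\dim(\cM_2/\cM_1|\cM_3/\cM_1)$ with $\dim(\cM_2/\cM_1|\cM_4/\cM_1)$. Your explicit verification that all four quantities are finite is a welcome bit of care that the paper leaves implicit.
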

\begin{proof} From Theorem~\ref{T-additivity} we have
\begin{align*}
 \dim(\cM_2|\cM_3)-\dim(\cM_1|\cM_3)&=\dim(\cM_2/\cM_1|\cM_3/\cM_1)\\
 &\ge \dim(\cM_2/\cM_1|\cM_4/\cM_1)\\
 &=\dim(\cM_2|\cM_4)-\dim(\cM_1|\cM_4),
\end{align*}
and hence \eqref{E-stable} holds. If $\dim(\cM_2|\cM_3)=\dim(\cM_2|\cM_4)<+\infty$, then we get $\dim(\cM_1|\cM_3)\le \dim(\cM_1|\cM_4)$, and hence
$\dim(\cM_1|\cM_3)=\dim(\cM_1|\cM_4)$.
\end{proof}

\section{Length Functions} \label{S-length}

Let $R$ be a unital ring. In this section we study the relation between length functions and Sylvester rank functions.

The following is the definition of length function introduced by Northcott and Reufel in \cite{NR}. In fact they consider the general case where $\rL(R)$ could take any value in  $\Rb_{\ge 0}\cup \{+\infty\}$. For relation with the Sylvester rank functions, we require the normalization $\rL(R)=1$ here.

\begin{definition} \label{D-normalized length}
A {\it normalized length function} for $R$ is an $\Rb_{\ge 0}\cup \{+\infty\}$-valued function $\cM\mapsto \rL(\cM)$ on the class of all $R$-modules satisfying the following properties:
\begin{enumerate}
\item (Normalization) $\rL(0)=0$ and $\rL(R)=1$.
\item (Continuity) $\rL(\cM)=\sup_{\cM'}\rL(\cM')$ for $\cM'$ ranging over all finitely generated $R$-submodules of $\cM$.
\item (Additivity) For any short exact sequence $0\rightarrow \cM_1\rightarrow \cM_2\rightarrow \cM_3\rightarrow 0$ of $R$-modules, one has $\rL(\cM_2)=\rL(\cM_1)+\rL(\cM_3)$.
\end{enumerate}
\end{definition}

It follows from the additivity and normalization conditions that each normalized length function is an isomorphism invariant. Clearly the restriction of each normalized length function to the class of finitely presented $R$-modules is a Sylvester module rank function. Furthermore, if $\rL$ is a normalized length function for $R$, then $\dim(\cM_1|\cM_2):=\rL(\cM_1)$ for $R$-modules $\cM_1\subseteq \cM_2$ is a bivariant Sylvester module rank function for $R$.

\begin{proposition} \label{P-length}
Let $\dim(\cdot|\cdot)$ be a bivariant Sylvester module rank function for $R$. The following are equivalent.
\begin{enumerate}
\item $\dim(\cdot)$ is a normalized length function.
\item For any $R$-modules $\cM_1\subseteq \cM_2$ one has $\dim(\cM_1|\cM_2)=\dim(\cM_1)$.
\item For any exact sequence
$$ 0\rightarrow \cM_1\rightarrow \cM_2\rightarrow \cM_3\rightarrow 0$$
of $R$-modules such that $\cM_2$ and $\cM_3$ are finitely presented (then $\cM_1$ must be finitely generated by \cite[Proposition 4.26]{Lam}), one has $\dim(\cM_2)=\dim(\cM_1)+\dim(\cM_3)$.
\end{enumerate}
\end{proposition}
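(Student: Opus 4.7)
The plan is to prove the cyclic chain $(1)\Rightarrow(3)\Rightarrow(2)\Rightarrow(1)$. The direction $(1)\Rightarrow(3)$ is immediate, since (3) is just a special case of the additivity clause in Definition~\ref{D-normalized length}. For $(2)\Rightarrow(1)$, normalization is already built into Definition~\ref{D-bivariant}(2); continuity follows from Definition~\ref{D-bivariant}(4) applied to the pair $\cM\subseteq \cM$ combined with (2), giving $\dim(\cM)=\sup_{\cM'}\dim(\cM'|\cM)=\sup_{\cM'}\dim(\cM')$; and additivity along any short exact sequence $0\to\cM_1\to\cM_2\to\cM_3\to 0$ follows from Theorem~\ref{T-additivity} after substituting $\dim(\cM_1|\cM_2)=\dim(\cM_1)$ using (2).

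The substance lies in $(3)\Rightarrow(2)$. First I would reduce to the case where $\cM_1\subseteq \cM_2$ are both finitely generated. Indeed, once the equality $\dim(\cM_1|\cM_2)=\dim(\cM_1)$ is known in that case, Definition~\ref{D-bivariant}(5) (infimum over finitely generated $\cM_2'\supseteq \cM_1$) extends it to finitely generated $\cM_1$ and arbitrary $\cM_2$, after which Definition~\ref{D-bivariant}(4) (supremum over finitely generated $\cM_1'\subseteq \cM_1$), applied to both sides of the desired equality — note that $\dim(\cM_1)=\dim(\cM_1|\cM_1)$ admits the same supremum by (4) applied to $\cM_1\subseteq \cM_1$ — handles the general case.

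So assume $\cM_1\subseteq \cM_2$ are finitely generated. The inequality $\dim(\cM_1|\cM_2)\le \dim(\cM_1|\cM_1)=\dim(\cM_1)$ is immediate from Proposition~\ref{P-bivariant basic}(2); by Theorem~\ref{T-additivity} it therefore remains to show $\dim(\cM_2)\ge \dim(\cM_1)+\dim(\cM_2/\cM_1)$. I would write $\cM_2=R^m/\cK$ and let $\cK'\subseteq R^m$ be the preimage of $\cM_1$, so that $\cM_1=\cK'/\cK$ and $\cM_2/\cM_1=R^m/\cK'$. For any finitely generated $\cN\subseteq \cK$ I would choose a finitely generated $\cN'\subseteq \cK'$ with $\cN\subseteq \cN'$ and $\cN'+\cK=\cK'$, which is possible precisely because $\cM_1$ is finitely generated. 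Then $R^m/\cN$ and $R^m/\cN'$ are both finitely presented, so hypothesis (3) applied to the short exact sequence $0\to \cN'/\cN\to R^m/\cN\to R^m/\cN'\to 0$ yields
$$\dim(R^m/\cN)=\dim(\cN'/\cN)+\dim(R^m/\cN').$$
The induced map $\cN'/\cN\twoheadrightarrow (\cN'+\cK)/\cK=\cM_1$ is surjective by the choice of $\cN'$, so Lemma~\ref{L-quotient fg} gives $\dim(\cN'/\cN)\ge \dim(\cM_1)$, and hence $\dim(R^m/\cN)-\dim(R^m/\cN')\ge \dim(\cM_1)$.

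To finish I would take two infima in succession. Any finitely generated submodule of $\cK'$ can be enlarged to satisfy the constraints ``$\supseteq \cN$'' and ``$+\cK=\cK'$'' without increasing $\dim(R^m/\cN')$ (by Lemma~\ref{L-quotient fp}), so the infimum of $\dim(R^m/\cN')$ over admissible $\cN'$ coincides with the infimum over all finitely generated submodules of $\cK'$, which equals $\dim(R^m/\cK')=\dim(\cM_2/\cM_1)$ by Lemma~\ref{L-dim for fg}. The remaining infimum over finitely generated $\cN\nearrow\cK$ then yields $\dim(\cM_2)-\dim(\cM_2/\cM_1)\ge \dim(\cM_1)$, as required. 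The main obstacle is the simultaneous control required of $\cN'$ in the finitely generated case: it must contain $\cN$, lie in $\cK'$, approximate $\cK'$ well enough to recover $\dim(\cM_2/\cM_1)$ in the infimum, and yet be small enough that the finitely generated quotient $\cN'/\cN$ still surjects onto $\cM_1$ — and finite generation of $\cM_1$ is exactly what makes all four demands simultaneously realizable.
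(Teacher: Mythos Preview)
Your proof is correct and follows essentially the same approach as the paper. The paper also proves $(2)\Rightarrow(1)\Rightarrow(3)$ trivially and concentrates on $(3)\Rightarrow(2)$ via a free presentation $R^m\twoheadrightarrow\cM_2$, applying hypothesis (3) to the finitely presented quotients $R^m/\cN$ and $R^m/\cN'$ and using that the intermediate kernel surjects onto $\cM_1$; the only cosmetic difference is that the paper fixes a single finitely generated lift $\cM_1^*\subseteq R^m$ of $\cM_1$ and uses $\cN'=\cN+\cM_1^*$ throughout, taking a single limit $\cN\nearrow\ker(\alpha)$, whereas you allow $\cN'$ to vary more freely and take two infima in succession.
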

\begin{proof} (2)$\Rightarrow$(1)$\Rightarrow$(3) is trivial.

 (3)$\Rightarrow$(2): Assume that (3) holds. Let $\cM_1\subseteq \cM_2$ be finitely generated $R$-modules. Take a surjective map $\alpha: R^m\rightarrow \cM_2$ for some $m\in \Nb$. Take a finitely generated $R$-submodule $\cM_1^*$ of $R^m$ with $(\cM_1^*)\alpha=\cM_1$. Let $\cM^*$ be a finitely generated $R$-submodule of $\ker(\alpha)$. Then we have the exact sequence
$$ 0\rightarrow (\cM^*+\cM_1^*)/\cM^*\rightarrow R^m/\cM^*\rightarrow R^m/(\cM^*+\cM_1^*)\rightarrow 0,$$
and both $R^m/\cM^*$ and $R^m/(\cM^*+\cM_1^*)$ are finitely presented. Thus
$$ \dim(R^m/\cM^*)=\dim((\cM^*+\cM_1^*)/\cM^*)+\dim(R^m/(\cM^*+\cM_1^*))$$
by (3). Note that $\cM_1$ is a quotient module of $(\cM^*+\cM_1^*)/\cM^*$. Thus $\dim((\cM^*+\cM_1^*)/\cM^*)\ge \dim(\cM_1)$, and hence
$$\dim(R^m/\cM^*)-\dim(R^m/(\cM^*+\cM_1^*))\ge \dim(\cM_1).$$
Then we have
\begin{align*}
\dim(\cM_1)&\ge \dim(\cM_1|\cM_2)\\
&=\dim(\cM_2)-\dim(\cM_2/\cM_1)\\
&=\lim_{\cM^*\nearrow \ker(\alpha)}\dim(R^m/\cM^*)-\lim_{\cM^*\nearrow \ker(\alpha)}\dim(R^m/(\cM^*+\cM_1^*))\\
&\ge \dim(\cM_1),
\end{align*}
where in the third line $\cM^*$ ranges over finitely generated $R$-submodules of $\ker(\alpha)$ ordered by inclusion.
It follows that
$$\dim(\cM_1)=\dim(\cM_1|\cM_2).$$
Taking infimum over finitely generated $R$-submodules of $\cM_2$ containing $\cM_1$, we see that the above equality holds whenever $\cM_1$ is finitely generated.
For any $R$-modules $\cM_1\subseteq \cM_2$, we get
\begin{align*}
\dim(\cM_1)=\dim(\cM_1|\cM_1)=\sup_{\cM_1'}\dim(\cM_1'|\cM_1)=\sup_{\cM_1'}\dim(\cM_1'|\cM_2)=\dim(\cM_1|\cM_2),
\end{align*}
where $\cM_1'$ ranges over finitely generated $R$-submodules of $\cM_1$.
\end{proof}

From Theorem~\ref{T-extension}, Lemma~\ref{L-dim for fg} and Proposition~\ref{P-length} we obtain the following recent result of Virili.

\begin{corollary}[\cite{Virili17}] \label{C-length}
Let $\dim$ be a Sylvester module rank function for $R$. Then $\dim$ extends to a normalized length function for $R$ if and only if for any surjective map $\alpha: \cM_1\rightarrow \cM_2$ of finitely presented $R$-modules one has $\dim(\cM_1)-\dim(\cM_2)=\inf_\cM \dim(\cM)$ for $\cM$ ranging over finitely presented $R$-modules admitting $\ker(\alpha)$ as a quotient module. Furthermore, in such case the extension is unique.
\end{corollary}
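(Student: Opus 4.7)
The plan is to route everything through the bivariant framework of Section~\ref{S-bivariant} and then invoke the characterization in Proposition~\ref{P-length}. By Theorem~\ref{T-extension}, the given Sylvester module rank function $\dim$ admits a unique bivariant extension $\dim(\cdot|\cdot)$, which in particular yields an invariant $\dim(\cM) := \dim(\cM|\cM)$ on all $R$-modules. For finitely generated $\cM$, Lemma~\ref{L-dim for fg} identifies $\dim(\cM)$ with $\inf_{\cM'}\dim(\cM')$ as $\cM'$ ranges over finitely presented $R$-modules admitting $\cM$ as a quotient. I plan to prove the equivalence by showing that the displayed condition is exactly what is needed for this $\dim(\cdot)$ to satisfy condition (3) of Proposition~\ref{P-length}, and to handle uniqueness by observing that any candidate length function determines, and is determined by, the same bivariant function.

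For the necessity of the condition, I would assume $\dim$ extends to a normalized length function $\rL$. As noted after Definition~\ref{D-normalized length}, the rule $\dim_\rL(\cM_1|\cM_2):=\rL(\cM_1)$ defines a bivariant Sylvester module rank function that extends $\dim$ on finitely presented modules, so by Lemma~\ref{L-unique} it coincides with $\dim(\cdot|\cdot)$; in particular $\rL(\cM)=\dim(\cM)$ for every finitely generated $\cM$. Now given a surjection $\alpha:\cM_1\rightarrow\cM_2$ of finitely presented $R$-modules, the kernel $\ker(\alpha)$ is finitely generated by \cite[Proposition 4.26]{Lam}, and additivity of $\rL$ gives $\dim(\cM_1)-\dim(\cM_2)=\rL(\ker(\alpha))=\dim(\ker(\alpha))$. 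Lemma~\ref{L-dim for fg} rewrites the right-hand side as the infimum in the statement.

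For sufficiency, I would verify condition (3) of Proposition~\ref{P-length}: for any exact sequence $0\rightarrow\cM_1\rightarrow\cM_2\rightarrow\cM_3\rightarrow 0$ with $\cM_2,\cM_3$ finitely presented (so $\cM_1$ is finitely generated), apply the hypothesis to the surjection $\cM_2\rightarrow\cM_3$ whose kernel is $\cM_1$ to get $\dim(\cM_2)-\dim(\cM_3)=\inf_\cM \dim(\cM)$, and then use Lemma~\ref{L-dim for fg} to identify the right-hand side with $\dim(\cM_1)$. Proposition~\ref{P-length} then promotes $\dim(\cdot)$ to a normalized length function. Finally, uniqueness follows from the same Lemma~\ref{L-unique} argument used above: any normalized length function extending $\dim$ gives rise via $\dim_\rL(\cM_1|\cM_2)=\rL(\cM_1)$ to a bivariant extension of $\dim$, which must equal $\dim(\cdot|\cdot)$, and so $\rL$ is forced on finitely generated modules and then on arbitrary modules by continuity. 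I do not anticipate a serious obstacle; the real work has been done in Theorem~\ref{T-extension}, Lemma~\ref{L-dim for fg}, and Proposition~\ref{P-length}, and the corollary is essentially a translation between the ``module-level'' and ``bivariant'' formulations of the same additivity property.
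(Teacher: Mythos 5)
Your proposal is correct and follows exactly the route the paper indicates: it derives the corollary by combining Theorem~\ref{T-extension} (unique bivariant extension), Lemma~\ref{L-dim for fg} (identifying $\dim(\cM)$ for finitely generated $\cM$ with the infimum over finitely presented modules surjecting onto $\cM$), and Proposition~\ref{P-length} (equivalence with condition (3)), with Lemma~\ref{L-unique} handling uniqueness. The paper states the corollary as an immediate consequence of these three results, and your write-up simply makes that derivation explicit.
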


If $R$ is von Neumann regular, i.e. for any $x\in R$ there is some $y\in R$ with $xyx=x$, then every finitely presented $R$-module is projective \cite{Goodearl} \cite[Exercise 6.19]{Lam01}, and hence every bivariant Sylvester module rank function for $R$ is a normalized length function.

\section{Direct Limits} \label{S-limit}

In this section we prove Proposition~\ref{P-direct limit}, which gives the continuity of bivariant Sylvester rank functions with respect to direct limits.

Let $R$ be a unital ring and let $\dim(\cdot|\cdot)$ be a bivariant Sylvester module rank function for $R$.

\begin{proposition} \label{P-reduce kernel to fg}
Let $\cM_1\subseteq \cM_2$ be $R$-modules such that $\cM_1$ is finitely generated.
For each $R$-submodule $\cM$ of $\cM_2$ denote by $\gamma_\cM$ the quotient map $\cM_2\rightarrow \cM_2/\cM$. Then for any $R$-submodule $\cM$ of $\cM_2$, we have
$$ \dim((\cM_1)\gamma_{\cM}|\cM_2/\cM)=\inf_{\cM'}\dim((\cM_1)\gamma_{\cM'}|\cM_2/\cM')$$
for $\cM'$ ranging over finitely generated $R$-submodules of $\cM$.
\end{proposition}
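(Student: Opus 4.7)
The plan is to prove both inequalities. The easy direction $\dim((\cM_1)\gamma_\cM | \cM_2/\cM)\le\inf_{\cM'}\dim((\cM_1)\gamma_{\cM'}|\cM_2/\cM')$ is immediate from Proposition~\ref{P-hom for relative}: for each fg $\cM'\subseteq\cM$ the quotient map $\cM_2/\cM'\twoheadrightarrow\cM_2/\cM$ carries $(\cM_1)\gamma_{\cM'}$ onto $(\cM_1)\gamma_\cM$.

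For the reverse direction, set $c:=\dim((\cM_1)\gamma_\cM|\cM_2/\cM)$ and $\cK:=\cM_1\cap\cM$. Given $\varepsilon>0$, I would construct a single fg $\cM'\subseteq\cM$ with $\dim((\cM_1)\gamma_{\cM'}|\cM_2/\cM')\le c+2\varepsilon$ in two coordinated stages. First, since $(\cM_1+\cM)/\cM$ is finitely generated, Proposition~\ref{P-fg bound} produces a fg $\cR\subseteq\cM_2$, which after enlarging we may assume contains $\cM_1$, with $\dim((\cM_1+\cM)/\cM\,|\,(\cR+\cM)/\cM)\le c+\varepsilon$. The Second Isomorphism Theorem gives $(\cR+\cM)/\cM\cong\cR/(\cR\cap\cM)$, and under this the submodule $(\cM_1+\cM)/\cM$ corresponds to $(\cM_1+\cR\cap\cM)/(\cR\cap\cM)\cong\cM_1/\cK$ (using $\cM_1\subseteq\cR$, so $\cM_1\cap(\cR\cap\cM)=\cM_1\cap\cM=\cK$), whence
$$\dim(\cM_1/\cK\,|\,\cR/(\cR\cap\cM))\le c+\varepsilon.$$
Second, because $\cR$ is finitely generated, $\dim(\cR\cap\cM|\cR)<+\infty$ by Proposition~\ref{P-bivariant basic}(3), so continuity~(4) of Definition~\ref{D-bivariant} yields a fg $\cZ\subseteq\cR\cap\cM$ with $\dim(\cR\cap\cM|\cR)-\dim(\cZ|\cR)\le\varepsilon$. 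I would then take $\cM':=\cZ$.

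The key estimate is a short telescope, all of whose dimensions are finite because every module involved lies inside the finitely generated $\cR$:
\begin{align*}
\dim((\cM_1)\gamma_\cZ\,|\,\cM_2/\cZ)
 &\le \dim((\cM_1+\cZ)/\cZ\,|\,\cR/\cZ) && (\text{Prop.~\ref{P-bivariant basic}(2)})\\
 &= \dim(\cM_1+\cZ\,|\,\cR)-\dim(\cZ\,|\,\cR) && (\text{Thm~\ref{T-additivity}})\\
 &\le \dim(\cM_1+\cR\cap\cM\,|\,\cR)-\dim(\cZ\,|\,\cR) && (\text{Prop.~\ref{P-bivariant basic}(1)})\\
 &= \dim(\cR\cap\cM\,|\,\cR)+\dim(\cM_1/\cK\,|\,\cR/(\cR\cap\cM))-\dim(\cZ\,|\,\cR) && (\text{Thm~\ref{T-additivity}})\\
 &\le \varepsilon+(c+\varepsilon)=c+2\varepsilon,
\end{align*}
and letting $\varepsilon\to 0$ finishes the proof.

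The main obstacle I anticipate is choosing the two approximations $\cR$ and $\cZ$ jointly so that $\cZ\subseteq\cR\cap\cM$, because this is precisely what ensures that every term in the telescope stays inside the fg module $\cR$ and that the two error bounds can be added. The reason the argument works is that one can first freeze a fg ambient $\cR$ carrying the data of the $c+\varepsilon$ estimate in a transparent fg form (via the Second Isomorphism Theorem), and only then extract a fg $\cZ$ inside $\cR\cap\cM$ controlling the discrepancy between $\cR\cap\cM$ and its fg approximations; once this is done, the telescope is just additivity together with monotonicity in each argument.
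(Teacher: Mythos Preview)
Your proof is correct and follows essentially the same route as the paper's. Your finitely generated ambient $\cR$ is the paper's $\cM_2^\sharp$, your $\cZ$ is the paper's $\cM'$, and your telescope is the same additivity-plus-monotonicity chain; the only cosmetic difference is that you phrase the first reduction via the Second Isomorphism Theorem and cite Proposition~\ref{P-fg bound}, whereas the paper invokes the continuity condition~(5) of Definition~\ref{D-bivariant} directly.
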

\begin{proof} For each $R$-submodule $\cM'$ of $\cM$, since $\gamma_\cM$ factors through $\gamma_{\cM'}$,  by Proposition~\ref{P-hom for relative} we have
$$ \dim((\cM_1)\gamma_\cM |\cM_2/\cM)\le \dim((\cM_1)\gamma_{\cM'}|\cM_2/\cM').$$

Let $\varepsilon>0$.
Take a finitely generated $R$-submodule $\cM^\dag$ of $\cM_2/\cM$ containing $(\cM_1)\gamma_\cM$ such that
$$ \dim((\cM_1)\gamma_\cM |\cM^\dag)\le \dim((\cM_1)\gamma_\cM |\cM_2/\cM)+\varepsilon.$$
Take a finitely generated $R$-submodule $\cM_2^\sharp$ of $\cM_2$ containing $\cM_1$ such that $(\cM_2^\sharp)\gamma_\cM=\cM^\dag$. Since $\dim(\cM_2^\sharp\cap \cM|\cM_2^\sharp)\le \dim(\cM_2^\sharp)<+\infty$,
we can find a finitely generated $R$-submodule $\cM'$ of $\cM_2^\sharp\cap \cM$ such that
$$\dim(\cM_2^\sharp\cap \cM|\cM_2^\sharp)\le \dim(\cM'|\cM_2^\sharp)+\varepsilon.$$
Then by Theorem~\ref{T-additivity} we have
$$ \dim((\cM_2^\sharp\cap \cM)\gamma_{\cM'}|(\cM_2^\sharp)\gamma_{\cM'})=\dim(\cM_2^\sharp\cap \cM|\cM_2^\sharp)-\dim(\cM'|\cM_2^\sharp)\le \varepsilon.$$
Now we have
\begin{eqnarray*}
& &\dim((\cM_1)\gamma_{\cM'}|\cM_2/\cM')\\
&\le& \dim((\cM_1)\gamma_{\cM'}|(\cM_2^\sharp)\gamma_{\cM'})\\
&\le& \dim((\cM_1+(\cM_2^\sharp\cap \cM))\gamma_{\cM'}|(\cM_2^\sharp)\gamma_{\cM'})\\
&=& \dim((\cM_2^\sharp\cap \cM)\gamma_{\cM'}|(\cM_2^\sharp)\gamma_{\cM'})+\dim((\cM_1)\gamma_{\cM_2^\sharp\cap \cM}|(\cM_2^\sharp)\gamma_{\cM_2^\sharp\cap \cM})\\
&=& \dim((\cM_2^\sharp\cap \cM)\gamma_{\cM'}|(\cM_2^\sharp)\gamma_{\cM'})+\dim((\cM_1)\gamma_\cM|(\cM_2^\sharp)\gamma_\cM)\\
&\le& \dim((\cM_1)\gamma_\cM |\cM_2/\cM)+2\varepsilon,
\end{eqnarray*}
where in the first equality we apply Theorem~\ref{T-additivity} again.
\end{proof}

By a {\it direct system of $R$-modules} we mean a family $\{\cM_j\}_{j\in J}$ of $R$-modules indexed by a directed set $J$ and a map $\beta_{jk}: \cM_j\rightarrow \cM_k$ for all $j\le k$ such that $\beta_{jj}=\id_{\cM_j}$ for all $j$ and $\beta_{ij}\beta_{jk}=\beta_{ik}$ for all $i\le j\le k$. For any direct system $(\cM_j, \beta_{jk})$ of $R$-modules over a directed set $J$, one has the direct limit $\underrightarrow{\lim} \cM_k$ \cite[Proposition B-7.7]{Rotman}. For an $R$-module $\cM$, we say that maps $\alpha_j: \cM\rightarrow \cM_j$ for each $j\in J$ and  $\alpha_\infty: \cM\rightarrow \underrightarrow{\lim} \cM_k$ are compatible if $\alpha_j\beta_{jk}=\alpha_k$ for all $j\le k$ and $\alpha_j\beta_{j}=\alpha_\infty$ for all $j\in J$, where $\beta_{j}$ is the canonical map $\cM_j\rightarrow \underrightarrow{\lim} \cM_k$.

\begin{proposition} \label{P-direct limit}
Let $(\cM_j, \beta_{jk})$ be a direct system of $R$-modules over a directed set $J$ with direct limit $\cM_\infty$. Let $\cM$ be an $R$-module with compatible maps $\alpha_j:\cM\rightarrow \cM_j$ and $\alpha_\infty: \cM\rightarrow \cM_\infty$. Suppose that $\dim(\im(\alpha_i)|\cM_i)<+\infty$ for some $i\in J$. Then
$$\dim(\im(\alpha_\infty)|\cM_\infty)=\lim_{j\to \infty}\dim(\im(\alpha_j)|\cM_j)=\inf_{j\in J}\dim(\im(\alpha_j)|\cM_j).$$
\end{proposition}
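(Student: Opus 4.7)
The plan is to prove the two inequalities separately and observe along the way that the net $(\dim(\im(\alpha_j)|\cM_j))_{j\in J}$ is decreasing, so its infimum equals its limit.

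\textbf{Monotonicity and the easy direction.} For $i\le j$ in $J$, compatibility gives $\im(\alpha_j)=(\im(\alpha_i))\beta_{ij}$. Applying Proposition~\ref{P-hom for relative} to $\beta_{ij}:\cM_i\to\cM_j$ yields $\dim(\im(\alpha_j)|(\cM_i)\beta_{ij})\le\dim(\im(\alpha_i)|\cM_i)$, and since $(\cM_i)\beta_{ij}\subseteq\cM_j$, Proposition~\ref{P-bivariant basic}(2) turns this into $\dim(\im(\alpha_j)|\cM_j)\le\dim(\im(\alpha_i)|\cM_i)$. Hence the net is decreasing, so its infimum is the limit. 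The same argument with $\beta_j:\cM_j\to\cM_\infty$ shows $\dim(\im(\alpha_\infty)|\cM_\infty)\le\dim(\im(\alpha_j)|\cM_j)$ for every $j$, proving the ``$\le$'' direction.

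\textbf{Reduction to finitely generated $\cM$.} Let $c:=\inf_{j\in J}\dim(\im(\alpha_j)|\cM_j)\le\dim(\im(\alpha_{i_0})|\cM_{i_0})<+\infty$. Fix $\varepsilon>0$ and, by Definition~\ref{D-bivariant}(4), choose a finitely generated $\cM'\subseteq\cM$ with $\dim((\cM')\alpha_{i_0}|\cM_{i_0})\ge\dim(\im(\alpha_{i_0})|\cM_{i_0})-\varepsilon$. Theorem~\ref{T-additivity} applied to $(\cM')\alpha_{i_0}\subseteq\im(\alpha_{i_0})\subseteq\cM_{i_0}$ gives $\dim(\im(\alpha_{i_0})/(\cM')\alpha_{i_0}|\cM_{i_0}/(\cM')\alpha_{i_0})\le\varepsilon$. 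For each $j\ge i_0$ the map $\beta_{i_0,j}$ descends to a map $\bar\beta:\cM_{i_0}/(\cM')\alpha_{i_0}\to\cM_j/(\cM')\alpha_j$, and Proposition~\ref{P-hom for relative} combined with Proposition~\ref{P-bivariant basic}(2) yields $\dim(\im(\alpha_j)/(\cM')\alpha_j|\cM_j/(\cM')\alpha_j)\le\varepsilon$. Theorem~\ref{T-additivity} then gives $\dim((\cM')\alpha_j|\cM_j)\ge\dim(\im(\alpha_j)|\cM_j)-\varepsilon\ge c-\varepsilon$, a bound that extends to all $j\in J$ by directedness. So if the proposition holds whenever $\cM$ is finitely generated, then applied to $\cM'$ it yields $\dim((\cM')\alpha_\infty|\cM_\infty)\ge c-\varepsilon$, hence $\dim(\im(\alpha_\infty)|\cM_\infty)\ge c-\varepsilon$ by Proposition~\ref{P-bivariant basic}(1); letting $\varepsilon\to 0$ concludes.

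\textbf{The finitely generated case.} Assume $\cM$ is finitely generated, so $\im(\alpha_\infty)$ and each $\im(\alpha_j)$ are finitely generated. By Definition~\ref{D-bivariant}(5), $\dim(\im(\alpha_\infty)|\cM_\infty)=\inf_\cL\dim(\im(\alpha_\infty)|\cL)$ for $\cL$ ranging over finitely generated submodules of $\cM_\infty$ containing $\im(\alpha_\infty)$. Since $\cM_\infty=\bigcup_{j\in J}(\cM_j)\beta_j$ is a directed union, any such $\cL$ satisfies $\cL\subseteq(\cM_j)\beta_j$ for some $j\in J$, and enlarging we may write $\cL=(\cL'')\beta_j$ for a finitely generated $\cL''\subseteq\cM_j$ containing $\im(\alpha_j)$. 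Putting $\cK=\cL''\cap\ker(\beta_j)$ and letting $\gamma_\cK$ denote the quotient $\cL''\to\cL''/\cK$, we have $\dim(\im(\alpha_\infty)|\cL)=\dim((\im(\alpha_j))\gamma_\cK|\cL''/\cK)$, and Proposition~\ref{P-reduce kernel to fg} rewrites this as $\inf_\cW\dim((\im(\alpha_j))\gamma_\cW|\cL''/\cW)$ over finitely generated $\cW\subseteq\cK$.

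\textbf{Main obstacle.} The crux is to bound each $\dim((\im(\alpha_j))\gamma_\cW|\cL''/\cW)$ below by some $\dim(\im(\alpha_k)|\cM_k)$ with $k\ge j$. This rests on the standard description $\ker(\beta_j)=\bigcup_{k\ge j}\ker(\beta_{jk})$ for direct limits of modules, which is a directed union. Since $\cW$ is finitely generated and contained in $\ker(\beta_j)$, one gets $\cW\subseteq\ker(\beta_{jk})$ for some $k\ge j$. Then $\beta_{jk}$ factors through $\cL''/\cW$, yielding a map $\cL''/\cW\to(\cL'')\beta_{jk}\subseteq\cM_k$ sending $(\im(\alpha_j))\gamma_\cW$ onto $\im(\alpha_k)$. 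Proposition~\ref{P-hom for relative} plus Proposition~\ref{P-bivariant basic}(2) then give $\dim(\im(\alpha_k)|\cM_k)\le\dim((\im(\alpha_j))\gamma_\cW|\cL''/\cW)$. Taking the infimum yields $\dim(\im(\alpha_\infty)|\cL)\ge c$, and then the infimum over $\cL$ gives $\dim(\im(\alpha_\infty)|\cM_\infty)\ge c$, completing the proof.
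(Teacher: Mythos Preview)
Your proof is correct and follows essentially the same strategy as the paper: both use Proposition~\ref{P-hom for relative} for the monotonicity and easy direction, a finitely generated approximation of $\cM$ controlled via Theorem~\ref{T-additivity}, Proposition~\ref{P-reduce kernel to fg} to reduce to a finitely generated piece of $\ker(\beta_j)$, and the standard fact $\ker(\beta_j)=\bigcup_{k\ge j}\ker(\beta_{jk})$ to pass to some $\cM_k$. The only difference is organizational: you cleanly separate the reduction to finitely generated $\cM$ from the treatment of that case, whereas the paper interleaves the two steps into a single $\varepsilon$-argument.
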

\begin{proof} From Proposition~\ref{P-hom for relative} we know that $\dim(\im(\alpha_j)|\cM_j)$ decreases. Thus
$$\dim(\im(\alpha_\infty)|\cM_\infty)\le \lim_{j\to \infty}\dim(\im(\alpha_j)|\cM_j)=\inf_{j\in J}\dim(\im(\alpha_j)|\cM_j).$$

Let $\varepsilon>0$. Take a finitely generated $R$-submodule $\cM^\sharp$ of $\cM$ with
\begin{align} \label{E-direct limit1}
\dim(\im(\alpha_i)|\cM_i)\le \dim((\cM^\sharp)\alpha_i|\cM_i)+\varepsilon.
\end{align}
Denote by $\beta_j$ the map $\cM_j\rightarrow \cM_\infty$, and for each submodule $\cM^\dag$ of $\cM_j$ denote by $\gamma_{\cM^\dag}$ the quotient map $\cM_j\rightarrow \cM_j/\cM^\dag$.
Take $j\in J$ with $j\ge i$ such that
\begin{align*}
 \dim((\cM^\sharp)\alpha_\infty|\im(\beta_j))\le \dim((\cM^\sharp)\alpha_\infty|\cM_\infty)+\varepsilon.
\end{align*}
Note that
\begin{align*}
\dim(((\cM^\sharp)\alpha_j)\gamma_{\ker(\beta_j)}|\cM_j/\ker(\beta_j))=\dim((\cM^\sharp)\alpha_\infty|\im(\beta_j))\le \dim((\cM^\sharp)\alpha_\infty|\cM_\infty)+\varepsilon.
\end{align*}
By Proposition~\ref{P-reduce kernel to fg}
we can find a finitely generated $R$-submodule $\cM^\dag$ of $\ker(\beta_j)$ with
\begin{align*}
\dim(((\cM^\sharp)\alpha_j)\gamma_{\cM^\dag}|\cM_j/\cM^\dag)&\le \dim(((\cM^\sharp)\alpha_j)\gamma_{\ker(\beta_j)}|\cM_j/\ker(\beta_j))+\varepsilon\\
&\le \dim((\cM^\sharp)\alpha_\infty|\cM_\infty)+2\varepsilon.
\end{align*}
Take $k\ge j$ such that $(\cM^\dag)\beta_{jk}=0$.
Then $\beta_{jk}$ factors through $\gamma_{\cM^\dag}$. Thus by Proposition~\ref{P-hom for relative} we have
\begin{align*}
\dim(((\cM^\sharp)\alpha_j)\beta_{jk}|\im(\beta_{jk}))\le \dim(((\cM^\sharp)\alpha_j)\gamma_{\cM^\dag}|\cM_j/\cM^\dag)\le \dim((\cM^\sharp)\alpha_\infty|\cM_\infty)+2\varepsilon,
\end{align*}
and hence
\begin{align*}
 \dim((\cM^\sharp)\alpha_k|\cM_k)
&\le \dim((\cM^\sharp)\alpha_k|\im(\beta_{jk}))\\
&= \dim(((\cM^\sharp)\alpha_j)\beta_{jk}|\im(\beta_{jk}))\\
&\le \dim((\cM^\sharp)\alpha_\infty|\cM_\infty)+2\varepsilon.
\end{align*}
Since $\beta_{ik}\gamma_{(\cM^\sharp)\alpha_k}$ factors through $\gamma_{(\cM^\sharp)\alpha_i}$, by Proposition~\ref{P-hom for relative} and Theorem~\ref{T-additivity} we have
\begin{align*}
\dim((\im(\alpha_i))\beta_{ik}\gamma_{(\cM^\sharp)\alpha_k}|(\cM_i)\beta_{ik}\gamma_{(\cM^\sharp)\alpha_k})&\le \dim((\im(\alpha_i))\gamma_{(\cM^\sharp)(\alpha_i)}|(\cM_i)\gamma_{(\cM^\sharp)(\alpha_i)})\\
&=\dim(\im(\alpha_i)|\cM_i)-\dim((\cM^\sharp)\alpha_i|\cM_i)\\
&\overset{\eqref{E-direct limit1}}\le \varepsilon.
\end{align*}
Now by Theorem~\ref{T-additivity} we have
\begin{eqnarray*}
\dim(\im(\alpha_k)|\cM_k)
&=&\dim((\cM^\sharp)\alpha_k|\cM_k)+\dim((\im(\alpha_k))\gamma_{(\cM^\sharp)\alpha_k}|(\cM_k)\gamma_{(\cM^\sharp)\alpha_k})\\
&\le&\dim((\cM^\sharp)\alpha_k|\cM_k)+\dim((\im(\alpha_i))\beta_{ik}\gamma_{(\cM^\sharp)\alpha_k}|(\cM_i)\beta_{ik}\gamma_{(\cM^\sharp)\alpha_k})\\
&\le&\dim((\cM^\sharp)\alpha_\infty|\cM_\infty)+3\varepsilon\\
&\le& \dim(\im(\alpha_\infty)|\cM_\infty)+3\varepsilon.
\end{eqnarray*}
This means
$$\inf_{k\in J}\dim(\im(\alpha_k)|\cM_k)\le \dim(\im(\alpha_\infty)|\cM_\infty)+3\varepsilon.$$
Letting $\varepsilon\to 0$, we finish the proof.
\end{proof}

Note that for any $R$-modules $\cM_1\subseteq \cM_2$, the family $\{\cM_1+\cM^\dag\}$ for $\cM^\dag$ ranging over finitely generated $R$-submodules of $\cM_2$ form a direct system naturally with direct limit $\cM_2$.
The following consequence of Proposition~\ref{P-direct limit} strengthens the continuity condition (5) of Definition~\ref{D-bivariant}.

\begin{corollary} \label{C-direct limit}
Let $\cM_1\subseteq \cM_2$ be $R$-modules. Suppose that $\dim(\cM_1|\cM_1+\cM^\dag)<+\infty$ for some finitely generated $R$-submodule $\cM^\dag$ of $\cM_2$. Then
$$\dim(\cM_1|\cM_2)=\lim_{\cM_2'\nearrow \cM_2}\dim(\cM_1|\cM_1+\cM_2')=\inf_{\cM_2'}\dim(\cM_1|\cM_1+\cM_2')$$
for $\cM_2'$ ranging over finitely generated $R$-submodules of $\cM_2$ ordered by inclusion.
\end{corollary}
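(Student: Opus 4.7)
The plan is to realize this as a direct instance of Proposition~\ref{P-direct limit}. Let $J$ be the directed set of finitely generated $R$-submodules $\cM_2'$ of $\cM_2$, ordered by inclusion, and for each $\cM_2'\in J$ set $\cN_{\cM_2'}:=\cM_1+\cM_2'$. Whenever $\cM_2'\subseteq \cM_2''$ we have $\cN_{\cM_2'}\subseteq \cN_{\cM_2''}$, and taking $\beta_{\cM_2',\cM_2''}$ to be the inclusion makes $(\cN_{\cM_2'},\beta_{\cM_2',\cM_2''})_{\cM_2'\in J}$ into a direct system of $R$-modules. Since every element of $\cM_2$ lies in some (indeed in every sufficiently large) cyclic $\cN_{\cM_2'}$, the direct limit of this system is canonically $\cM_2$, with structure maps $\beta_{\cM_2'}:\cN_{\cM_2'}\hookrightarrow \cM_2$.

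Next I would apply Proposition~\ref{P-direct limit} to the module $\cM:=\cM_1$, with $\alpha_{\cM_2'}:\cM_1\hookrightarrow \cN_{\cM_2'}$ and $\alpha_\infty:\cM_1\hookrightarrow \cM_2$ taken to be the inclusions. Compatibility $\alpha_{\cM_2'}\beta_{\cM_2',\cM_2''}=\alpha_{\cM_2''}$ and $\alpha_{\cM_2'}\beta_{\cM_2'}=\alpha_\infty$ is immediate since all the maps are inclusions, and the images are $\im(\alpha_{\cM_2'})=\cM_1=\im(\alpha_\infty)$. The finiteness hypothesis in Proposition~\ref{P-direct limit} is exactly the assumption $\dim(\cM_1|\cM_1+\cM^\dag)<+\infty$ with $i=\cM^\dag$. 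Therefore Proposition~\ref{P-direct limit} yields
\begin{align*}
\dim(\cM_1|\cM_2)
&=\dim(\im(\alpha_\infty)|\cM_2)\\
&=\lim_{\cM_2'\nearrow \cM_2}\dim(\im(\alpha_{\cM_2'})|\cN_{\cM_2'})\\
&=\inf_{\cM_2'}\dim(\cM_1|\cM_1+\cM_2'),
\end{align*}
which is the desired equality.

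There is essentially no obstacle beyond book-keeping: the only subtleties are verifying that the family $\{\cM_1+\cM_2'\}_{\cM_2'\in J}$ really does have direct limit $\cM_2$ (clear from the fact that $J$ is cofinal in the set of all submodules of $\cM_2$ of the form ``$\cM_1$ plus something finitely generated'', whose union is $\cM_2$), and noting that Proposition~\ref{P-direct limit} already packages the monotonicity $\dim(\cM_1|\cM_1+\cM_2')\ge \dim(\cM_1|\cM_1+\cM_2'')$ for $\cM_2'\subseteq \cM_2''$ (which also follows directly from Proposition~\ref{P-bivariant basic}(2)), so the limit and the infimum coincide automatically.
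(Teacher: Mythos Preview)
Your proposal is correct and follows exactly the approach the paper indicates: the paper notes just before the corollary that $\{\cM_1+\cM^\dag\}$ is a direct system with direct limit $\cM_2$, and states the corollary as a direct consequence of Proposition~\ref{P-direct limit}. Your write-up simply spells out the choice of $\cM=\cM_1$, the inclusion maps, and the verification of the finiteness hypothesis, which is precisely what is needed.
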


\begin{remark}\label{R-direct limit}
The condition $\dim(\im(\alpha_i)|\cM_i)<+\infty$ for some $i\in J$ in Proposition~\ref{P-direct limit} cannot be dropped. For example, take $\cM=\bigoplus_{n\in \Nb}R$, $J=\Nb$, $\cM_j=\bigoplus_{n\ge j}R$ with the maps $\cM\rightarrow \cM_j$ and $\cM_j\rightarrow \cM_k$ for $j\le k$ being natural projections. Then $\cM_\infty=\{0\}$, and hence $\dim(\im(\alpha_\infty)|\cM_\infty)=0$. But $\dim(\im(\alpha_j)|\cM_j)=\dim(\cM_j|\cM_j)=\infty$ for all $j\in \Nb$.

Also, the condition $\dim(\cM_1|\cM_1+\cM^\dag)<+\infty$ for some finitely generated $R$-submodule $\cM^\dag$ of $\cM_2$ in Corollary~\ref{C-direct limit} cannot be dropped.
Suppose that $\cM_1^*\subseteq \cM_2^*$ are finitely generated $R$-modules with $\dim(\cM_1^*)>0$ and $\dim(\cM_1^*|\cM_2^*)=0$ (see Example~\ref{E-sofic} for such an example). Set $\cM_1=\bigoplus_{n\in \Nb}\cM_1^*$ and $\cM_2=\bigoplus_{n\in \Nb}\cM_2^*$.
Then
$$\dim(\bigoplus_{n=1}^m\cM_1^*|\cM_2)\le \dim(\bigoplus_{n=1}^m\cM_1^*|\bigoplus_{n=1}^m\cM_2^*)=\sum_{n=1}^m\dim(\cM_1^*|\cM_2^*)=0$$
for every $m\in \Nb$. Since every finitely generated $R$-submodule of $\cM_1$ is contained in  $\bigoplus_{n=1}^m\cM_1^*$ for some $m\in \Nb$, this shows that $\dim(\cM_1|\cM_2)=0$. For any finitely generated $R$-submodule $\cM_2'$ of $\cM_2$, we have $\cM_2'\subseteq \bigoplus_{n=1}^m\cM_2^*$ for some $m\in \Nb$. Thus
\begin{align*}
 \dim(\cM_1|\cM_1+\cM_2')&\ge \dim(\cM_1|\cM_1+\bigoplus_{n=1}^m\cM_2^*)\\
 &=\dim(\bigoplus_{n>m}\cM_1^*)+\dim(\bigoplus_{n=1}^m\cM_1^*|\bigoplus_{n=1}^m\cM_2^*)\\
 &=+\infty.
 \end{align*}
\end{remark}

\section{Extended Sylvester Map Rank Functions} \label{S-map}

Let $R$ be a unital ring. In this section we introduce extended Sylvester map rank functions and show that they are in natural one-to-one correspondence with bivariant Sylvester module rank functions.

\begin{definition} \label{D-map}
An {\it extended Sylvester map rank function} for $R$ is an $\Rb_{\ge 0}\cup \{+\infty\}$-valued function $\rk$ on the class of all maps between $R$-modules  satisfying the following conditions:
\begin{enumerate}
\item $\rk(0)=0$, $\rk(\id_R)=1$.
\item $\rk(\alpha \beta)\le \min(\rk(\alpha), \rk(\beta))$.
\item $\rk(\left[\begin{matrix} \alpha & \\ & \beta \end{matrix}\right])=\rk(\alpha)+\rk(\beta)$.
\item (Continuity) Let $(\cM_j, \beta_{jk})$ be a direct system of $R$-modules over a directed set $J$ with direct limit $\cM_\infty$. Let $\cM$ be an $R$-module with compatible maps $\alpha_j:\cM_j\rightarrow \cM$ and $\alpha_\infty: \cM_\infty\rightarrow \cM$, i.e. $\beta_{jk}\alpha_k=\alpha_j$ for all $j\le k$ and $\beta_j\alpha_\infty=\alpha_j$ for all $j\in J$, where $\beta_j$ is the canonical map $\cM_j\rightarrow \cM_\infty$. Then
$$\rk(\alpha_\infty)=\lim_{j\to \infty}\rk(\alpha_j).$$
\item (Continuity) Let $(\cM_j, \beta_{jk})$ be a direct system of $R$-modules over a directed set $J$ with direct limit $\cM_\infty$.
Let $\cM$ be a finitely generated $R$-module with compatible maps $\alpha_j:\cM\rightarrow \cM_j$ and $\alpha_\infty: \cM\rightarrow \cM_\infty$. Then
$$\rk(\alpha_\infty)=\lim_{j\to \infty}\rk(\alpha_j).$$
\item (Additivity) For any map $\alpha: \cM_1\rightarrow \cM_2$ between $R$-modules, one has
$$ \rk(\id_{\cM_2})=\rk(\alpha)+\rk(\id_{\coker(\alpha)}).$$
\end{enumerate}
\end{definition}

\begin{theorem} \label{T-mod vs map}
There is a natural $1$-$1$ correspondence between bivariant Sylvester module rank functions for $R$ and extended Sylvester map rank functions for $R$ as follows.
\begin{enumerate}
\item Let $\rk$ be an extended Sylvester map rank function for $R$. For any $R$-modules $\cM_1\subseteq \cM_2$, define $\dim(\cM_1|\cM_2):=\rk(\gamma_{\cM_1\subseteq \cM_2})$, where $\gamma_{\cM_1\subseteq \cM_2}$ denotes the embedding map $\cM_1\hookrightarrow \cM_2$. Then $\dim(\cdot|\cdot)$ is a bivariant Sylvester module rank function for $R$.
\item    Let $\dim(\cdot|\cdot)$ be a bivariant Sylvester module rank function for $R$. For any map $\alpha: \cM_1\rightarrow \cM_2$ of $R$-modules, define $\rk(\alpha):=\dim(\im(\alpha)|\cM_2)$. Then $\rk$ is an extended Sylvester map rank function for $R$.
\end{enumerate}
\end{theorem}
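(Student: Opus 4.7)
Setting $\dim(\cM_1|\cM_2):=\rk(\gamma_{\cM_1\subseteq\cM_2})$, each axiom of Definition~\ref{D-bivariant} will follow by specializing the matching axiom of Definition~\ref{D-map} to embeddings. Normalization and direct sum come from (1), (3) of Def~\ref{D-map}, since the direct sum of two embeddings is an embedding. Isomorphism invariance reduces to $\rk(\sigma)=\rk(\id_{\cM})=\rk(\id_{\cM'})$ for any isomorphism $\sigma:\cM\to\cM'$, which follows from (1) and (2) by composing $\sigma$ with $\sigma^{-1}$ in both orders. The two continuity conditions are instances of (4), (5) of Def~\ref{D-map} applied to the direct systems of f.g.\ submodules of $\cM_1$ and of $\cM_2$ containing $\cM_1$, respectively. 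Additivity (6) of Def~\ref{D-bivariant} is precisely (6) of Def~\ref{D-map} applied to $\gamma_{\cM_1\subseteq\cM_2}$, whose cokernel is $\cM_2/\cM_1$.

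\textbf{Direction (2): from $\dim$ to $\rk$.} Setting $\rk(\alpha):=\dim(\im(\alpha)|\cM_2)$ for $\alpha:\cM_1\to\cM_2$, conditions (1), (3), (6) of Def~\ref{D-map} are immediate. For (2), $\im(\alpha\beta)\subseteq\im(\beta)$ combined with Proposition~\ref{P-bivariant basic}(1) yields $\rk(\alpha\beta)\le\rk(\beta)$, while Proposition~\ref{P-hom for relative} combined with Proposition~\ref{P-bivariant basic}(2) yields $\rk(\alpha\beta)\le\rk(\alpha)$. Continuity (4) uses $\im(\alpha_\infty)=\bigcup_j\im(\alpha_j)$ together with (4) of Def~\ref{D-bivariant}, noting that every f.g.\ submodule of the directed union lies in some $\im(\alpha_j)$. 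Continuity (5) is exactly Proposition~\ref{P-direct limit}: when the common domain $\cM$ is f.g., each $\im(\alpha_j)$ is f.g., so $\dim(\im(\alpha_j)|\cM_j)<+\infty$ by Proposition~\ref{P-bivariant basic}(3), and the proposition applies.

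\textbf{Bijectivity.} One direction is immediate: starting from $\dim$, passing to $\rk$, and back recovers $\rk(\gamma_{\cM_1\subseteq\cM_2})=\dim(\im(\gamma_{\cM_1\subseteq\cM_2})|\cM_2)=\dim(\cM_1|\cM_2)$. The substance lies in the other direction: given $\rk$, with $\dim$ defined from $\rk$ as in (1) and $\rk'(\alpha):=\dim(\im(\alpha)|\cM_2)$, I must show $\rk'=\rk$. Factoring $\alpha:\cM_1\to\cM_2$ as $\alpha=\pi\iota$ where $\pi:\cM_1\to\im(\alpha)$ is the surjective corestriction and $\iota:\im(\alpha)\hookrightarrow\cM_2$ is the canonical embedding, we have $\rk'(\alpha)=\rk(\iota)$ by the very definition of $\dim$, so the task reduces to proving $\rk(\alpha)=\rk(\iota)$.

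\textbf{Main obstacle.} Applying (6) of Def~\ref{D-map} to both $\alpha$ and $\iota$, which share the cokernel $\cQ:=\cM_2/\im(\alpha)$, yields
\[
\rk(\alpha)+\rk(\id_{\cQ})=\rk(\id_{\cM_2})=\rk(\iota)+\rk(\id_{\cQ}),
\]
and the cancellation is unjustified when $\rk(\id_{\cQ})=+\infty$. The fix is a double reduction to finitely generated data. First, continuity (4) of Def~\ref{D-map} applied to the restrictions $\alpha':=\alpha|_{\cM_1'}:\cM_1'\to\cM_2$ over f.g.\ $\cM_1'\subseteq\cM_1$ gives $\rk(\alpha)=\lim_{\cM_1'}\rk(\alpha')$; since $\im(\alpha)=\bigcup\im(\alpha')$, the embedding $\iota$ is the direct limit of the embeddings $\iota':\im(\alpha')\to\cM_2$, and (4) of Def~\ref{D-map} again gives $\rk(\iota)=\lim_{\cM_1'}\rk(\iota')$. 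This reduces us to $\cM_1$ f.g., whence $\im(\alpha)$ is f.g. Second, continuity (5) of Def~\ref{D-map} applied to the direct system of f.g.\ submodules $\cM_2^{(k)}$ of $\cM_2$ containing $\im(\alpha)$ (with limit $\cM_2$) gives $\rk(\alpha)=\lim_k\rk(\alpha^{(k)})$ and $\rk(\iota)=\lim_k\rk(\iota^{(k)})$ for the induced maps into $\cM_2^{(k)}$. In this doubly f.g.\ situation $\cQ^{(k)}:=\cM_2^{(k)}/\im(\alpha)$ is f.g., and a surjection $R^n\to\cQ^{(k)}$ combined with (1), (2), (3), (6) of Def~\ref{D-map} forces $\rk(\id_{\cQ^{(k)}})\le n<+\infty$, so the cancellation in the additivity equations is legitimate and yields $\rk(\alpha^{(k)})=\rk(\iota^{(k)})$; taking limits then concludes the argument.
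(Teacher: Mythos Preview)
Your proof is correct and follows essentially the same strategy as the paper's. The paper dismisses direction (1) as ``trivial'' and for direction (2) only flags that conditions (2) and (5) of Definition~\ref{D-map} follow from Propositions~\ref{P-hom for relative} and~\ref{P-direct limit}; your more careful verification of each axiom is sound. For the nontrivial bijectivity direction, your argument and the paper's are minor rephrasings of the same idea: both reduce via continuity (4) to $\cM_1$ finitely generated, then via continuity (5) to $\cM_2$ finitely generated, and finally invoke additivity (6) in the finitely generated setting where all relevant quantities are finite. The only cosmetic difference is that the paper compares $\rk$ and $\rk'$ and reduces to checking $\rk(\id_\cM)=\rk'(\id_\cM)$, whereas you rewrite $\rk'(\alpha)=\rk(\iota)$ and compare $\rk(\alpha)$ with $\rk(\iota)$ via their common cokernel; the underlying cancellation is identical.
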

\begin{proof} (1) is trivial. To prove (2), let $\dim(\cdot|\cdot)$ be a bivariant Sylvester module rank function for $R$ and define $\rk$ as in (2).
Conditions (2) and (5) of Definition~\ref{D-map} follow easily from Propositions~\ref{P-hom for relative} and \ref{P-direct limit} respectively.
Thus $\rk$ is an extended Sylvester map rank function for $R$.

If we start with a bivariant Sylvester module rank function $\dim(\cdot|\cdot)$ for $R$, obtain an extended Sylvester map rank function $\rk$ by (2), and then obtain a bivariant Sylvester module rank function $\dim'(\cdot|\cdot)$ by (1) using $\rk$, then clearly $\dim=\dim'$.

Now we start with an extended Sylvester map rank function $\rk$, obtain a bivariant Sylvester module rank function $\dim(\cdot|\cdot)$ by (1), and then obtain an extended Sylvester map rank function $\rk'$ by (2) using $\dim(\cdot|\cdot)$. We need to show that $\rk(\alpha)=\rk'(\alpha)$ for any map $\alpha: \cM_1\rightarrow \cM_2$. Using condition (4) in Definition~\ref{D-map} we may assume that $\cM_1$ is finitely generated. Then using condition (5) in Definition~\ref{D-map} we may assume that $\cM_2$ is also finitely generated.
Note that from conditions (1), (3) and (6) of Definition~\ref{D-map} we know that $\rk(\id_\cM), \rk'(\id_\cM)<+\infty$ for all finitely generated $R$-modules $\cM$.
Then using condition (6) in Definition~\ref{D-map} we may assume that $\cM_1=\cM_2$ is finitely generated and $\alpha=\id_{\cM_2}$. But $\rk(\id_\cM)=\rk'(\id_\cM)$ for any $R$-module $\cM$ follows from the definition of $\rk'$.
\end{proof}

From Theorem~\ref{T-mod vs map} and Lemma~\ref{L-unique} we get immediately

\begin{corollary} \label{C-unique}
Let $\rk_1$ and $\rk_2$ be extended Sylvester map rank functions for $R$. If $\rk_1(\id_{\cM})=\rk_2(\id_{\cM})$ for all finitely presented $R$-modules $\cM$, then $\rk_1=\rk_2$.
\end{corollary}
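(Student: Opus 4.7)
The plan is to reduce this to Lemma~\ref{L-unique} via the one-to-one correspondence established in Theorem~\ref{T-mod vs map}. That correspondence sends any extended Sylvester map rank function $\rk$ to a bivariant Sylvester module rank function via $\dim(\cM_1|\cM_2):=\rk(\gamma_{\cM_1\subseteq\cM_2})$, and in particular $\rk(\id_\cM)=\dim(\cM|\cM)=\dim(\cM)$ for every $R$-module $\cM$.

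First, apply Theorem~\ref{T-mod vs map} to $\rk_1$ and $\rk_2$ to produce bivariant Sylvester module rank functions $\dim_1(\cdot|\cdot)$ and $\dim_2(\cdot|\cdot)$. By the above identity, the hypothesis $\rk_1(\id_\cM)=\rk_2(\id_\cM)$ for every finitely presented $R$-module $\cM$ translates directly to $\dim_1(\cM)=\dim_2(\cM)$ for every finitely presented $R$-module $\cM$.

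Next, invoke Lemma~\ref{L-unique}, which asserts that two bivariant Sylvester module rank functions agreeing on finitely presented modules must coincide everywhere; this yields $\dim_1=\dim_2$. Finally, since the assignment in Theorem~\ref{T-mod vs map} is bijective, $\rk_1=\rk_2$. There is no real obstacle here: the only content is the observation that the uniqueness lemma for bivariant module rank functions transports along the correspondence, because the bivariant rank function determined by $\rk_i$ recovers $\rk_i$ on identity maps via the normalization $\dim_i(\cM)=\rk_i(\id_\cM)$.
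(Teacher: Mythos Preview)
Your proof is correct and follows exactly the paper's approach: the paper simply states that the corollary follows immediately from Theorem~\ref{T-mod vs map} and Lemma~\ref{L-unique}, and you have spelled out precisely how those two ingredients combine.
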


From Theorems~\ref{T-dim vs rank}, \ref{T-extension} and \ref{T-mod vs map} we may identify Sylvester module rank functions, Sylvester map rank functions, Sylvester matrix rank functions, bivariant Sylvester module rank functions, and extended Sylvester map rank functions. We denote by $\Pb(R)$ the set of all Sylvester rank functions for $R$. Via treating elements of $\Pb(R)$ as Sylvester matrix rank functions equipped with the pointwise convergence topology, $\Pb(R)$ becomes a compact Hausdorff convex subset of a locally convex topological vector space.

For any maps $\alpha: \cM_1\rightarrow \cM_2$ and $\beta: \cM_2\rightarrow \cM_3$ between $R$-modules, we denote the induced map $\coker(\alpha)\rightarrow  \coker(\alpha \beta)$ by $\beta/\alpha$.  From Theorems~\ref{T-additivity} and \ref{T-mod vs map} we obtain

\begin{corollary} \label{C-additivity}
Let $\rk$ be an extended Sylvester map rank function for $R$. For any maps $\alpha: \cM_1\rightarrow \cM_2$ and $\beta: \cM_2\rightarrow \cM_3$ between $R$-modules, we have
$$ \rk(\beta)=\rk(\alpha \beta)+\rk(\beta/\alpha).$$
\end{corollary}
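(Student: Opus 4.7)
The plan is to translate the identity into the language of bivariant Sylvester module rank functions via Theorem~\ref{T-mod vs map}, and then read it off directly from the strong additivity established in Theorem~\ref{T-additivity}.

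First, by Theorem~\ref{T-mod vs map}(2), we have
$$\rk(\beta) = \dim(\im(\beta)\mid \cM_3) \quad\text{and}\quad \rk(\alpha\beta) = \dim(\im(\alpha\beta)\mid \cM_3).$$
To handle the third term, I would spend one line identifying $\im(\beta/\alpha)$ as an explicit submodule of $\coker(\alpha\beta)=\cM_3/\im(\alpha\beta)$. By definition $\beta/\alpha$ sends $x+\im(\alpha)$ to $(x)\beta+\im(\alpha\beta)$, so $\im(\beta/\alpha)$ is the image of $(\cM_2)\beta=\im(\beta)$ in $\cM_3/\im(\alpha\beta)$. Since $\im(\alpha\beta)=(\im(\alpha))\beta\subseteq \im(\beta)$, this image is naturally $\im(\beta)/\im(\alpha\beta)$, viewed as a submodule of $\cM_3/\im(\alpha\beta)$. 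Hence
$$\rk(\beta/\alpha)=\dim\bigl(\im(\beta)/\im(\alpha\beta) \,\big|\, \cM_3/\im(\alpha\beta)\bigr).$$

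Now applying Theorem~\ref{T-additivity} to the chain of $R$-modules $\im(\alpha\beta)\subseteq \im(\beta)\subseteq \cM_3$ yields
$$\dim(\im(\beta)\mid \cM_3) = \dim(\im(\alpha\beta)\mid \cM_3)+\dim\bigl(\im(\beta)/\im(\alpha\beta)\,\big|\,\cM_3/\im(\alpha\beta)\bigr),$$
which, under the above identifications, is exactly $\rk(\beta)=\rk(\alpha\beta)+\rk(\beta/\alpha)$.

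There is no substantive obstacle: all the work has already been done in Theorems~\ref{T-additivity} and \ref{T-mod vs map}. The only small thing to verify carefully is the identification of $\im(\beta/\alpha)$ with $\im(\beta)/\im(\alpha\beta)$ inside $\coker(\alpha\beta)$, which is immediate from the inclusion $\im(\alpha\beta)\subseteq\im(\beta)$.
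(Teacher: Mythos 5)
Your proof is correct and is precisely the argument the paper has in mind: the paper states the corollary as an immediate consequence of Theorems~\ref{T-additivity} and \ref{T-mod vs map} without spelling out the details, and your write-up supplies exactly those details (the identification of $\im(\beta/\alpha)$ with $\im(\beta)/\im(\alpha\beta)$ inside $\coker(\alpha\beta)$, followed by applying the strong additivity to the chain $\im(\alpha\beta)\subseteq\im(\beta)\subseteq\cM_3$).
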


\begin{remark} \label{R-Liang}
In \cite{Liang17} Bingbing Liang  pointed out that the bivariant Sylvester module rank function for $R\Gamma$ in Example~\ref{E-sofic} can be used to define a rank for maps between $R\Gamma$-modules as in Theorem~\ref{T-mod vs map} and that this rank satisfies Corollary~\ref{C-additivity}, though no other properties for this rank were given.
\end{remark}

The following consequence of Proposition~\ref{P-direct limit} strengthens condition (5) in Definition~\ref{D-map}.

\begin{corollary} \label{C-direct limit map}
Let $\rk$ be an extended Sylvester map rank function for $R$.
Let $(\cM_j, \beta_{jk})$ be a direct system of $R$-modules over a directed set $J$ with direct limit $\cM_\infty$.
Let $\cM$ be an $R$-module with compatible maps $\alpha_j:\cM\rightarrow \cM_j$ and $\alpha_\infty: \cM\rightarrow \cM_\infty$. Suppose that $\rk(\alpha_i)<+\infty$ for some $i\in J$. Then
$$\rk(\alpha_\infty)=\lim_{j\to \infty}\rk(\alpha_j)=\inf_{j\in J}\rk(\alpha_j).$$
\end{corollary}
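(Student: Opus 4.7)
The plan is to deduce this corollary as essentially a direct translation of Proposition~\ref{P-direct limit} via the bijective correspondence of Theorem~\ref{T-mod vs map}, which converts statements about extended Sylvester map rank functions into statements about bivariant Sylvester module rank functions. The corollary really just asserts the module-level continuity result of Proposition~\ref{P-direct limit} in its map-level form, so essentially no new work is needed beyond invoking the correspondence.

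Concretely, I would let $\dim(\cdot|\cdot)$ be the bivariant Sylvester module rank function for $R$ associated to $\rk$ by part~(1) of Theorem~\ref{T-mod vs map} applied in reverse (i.e.\ the unique bivariant function arising from $\rk$ via the $1$-$1$ correspondence established in that theorem). Then by part~(2) of Theorem~\ref{T-mod vs map}, for every map $\alpha\colon \cN_1\to \cN_2$ of $R$-modules one has $\rk(\alpha)=\dim(\im(\alpha)|\cN_2)$. Applied to our situation, this gives
\[
\rk(\alpha_j)=\dim(\im(\alpha_j)|\cM_j) \quad \text{for all } j\in J, \qquad \rk(\alpha_\infty)=\dim(\im(\alpha_\infty)|\cM_\infty).
\]

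With these identifications, the hypothesis $\rk(\alpha_i)<+\infty$ becomes $\dim(\im(\alpha_i)|\cM_i)<+\infty$, and the maps $\alpha_j\colon \cM\to \cM_j$, $\alpha_\infty\colon \cM\to \cM_\infty$ form a compatible family over the direct system $(\cM_j,\beta_{jk})$ in exactly the sense required by Proposition~\ref{P-direct limit}. Invoking that proposition immediately yields
\[
\dim(\im(\alpha_\infty)|\cM_\infty)=\lim_{j\to\infty}\dim(\im(\alpha_j)|\cM_j)=\inf_{j\in J}\dim(\im(\alpha_j)|\cM_j),
\]
and translating back through the identifications above gives the desired equalities for $\rk$.

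There is no substantive obstacle: the only thing to check is that the correspondence of Theorem~\ref{T-mod vs map} is genuinely bijective (so that every $\rk$ does arise from some $\dim(\cdot|\cdot)$), which is exactly what that theorem asserts. The corollary differs from condition~(5) of Definition~\ref{D-map} only in that we drop the finite-generation hypothesis on $\cM$ at the cost of assuming $\rk(\alpha_i)<+\infty$ for some $i$; this trade-off is precisely the content of Proposition~\ref{P-direct limit} at the module level, so nothing further is needed.
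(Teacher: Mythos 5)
Your proposal is correct and matches the paper's approach exactly: the paper presents this corollary as an immediate consequence of Proposition~\ref{P-direct limit}, obtained by translating through the correspondence $\rk(\alpha)=\dim(\im(\alpha)|\cM_2)$ of Theorem~\ref{T-mod vs map}. Nothing more is needed.
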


The reader might have noticed that condition (4) in Definition~\ref{D-map rank}  does not appear  in Definition~\ref{D-map}. The next result shows that it is a consequence of the conditions in Definition~\ref{D-map}.

\begin{corollary} \label{C-triangular}
Let $\rk$ be an extended Sylvester map rank function for $R$. For any maps $\alpha: \cM_1\rightarrow \cM_3$, $\beta: \cM_2\rightarrow \cM_4$ and $\gamma: \cM_1\rightarrow \cM_4$ between $R$-modules, we have
$$\rk(\left[\begin{matrix} \alpha & \gamma\\ & \beta \end{matrix}\right])\ge \rk(\alpha)+\rk(\beta).$$
\end{corollary}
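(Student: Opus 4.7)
Write $\sigma = \left[\begin{matrix} \alpha & \gamma \\ & \beta \end{matrix}\right] \colon \cM_1 \oplus \cM_2 \to \cM_3 \oplus \cM_4$, so that $(x, y)\sigma = (x\alpha, x\gamma + y\beta)$. The plan is to apply the additivity formula of Corollary~\ref{C-additivity} to the composable pair $(\iota_2, \sigma)$, where $\iota_2 \colon \cM_2 \hookrightarrow \cM_1 \oplus \cM_2$ sends $y$ to $(0, y)$, and then bound each of the two resulting summands from below by the desired quantity.

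For the first summand, I would observe that $\iota_2 \sigma$ sends $y$ to $(0, y\beta)$, so it factors as $\beta \iota_4$, where $\iota_4 \colon \cM_4 \hookrightarrow \cM_3 \oplus \cM_4$ is the inclusion; conversely $\beta = (\iota_2 \sigma) \pi_4$ with $\pi_4$ the projection onto $\cM_4$. Condition~(2) of Definition~\ref{D-map} applied to both factorizations then pins down $\rk(\iota_2 \sigma) = \rk(\beta)$.

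For the second summand, I would identify $\coker(\iota_2) \cong \cM_1$ via $(x, y) + \iota_2(\cM_2) \mapsto x$, and $\coker(\iota_2 \sigma) \cong \cM_3 \oplus (\cM_4/\im(\beta))$ in the obvious way. Under these identifications the induced map $\sigma/\iota_2 \colon \cM_1 \to \cM_3 \oplus (\cM_4/\im(\beta))$ becomes $x \mapsto (x\alpha, x\gamma + \im(\beta))$; composing with the projection onto $\cM_3$ recovers $\alpha$, so condition~(2) yields $\rk(\sigma/\iota_2) \geq \rk(\alpha)$. Corollary~\ref{C-additivity} then gives
$$\rk(\sigma) \;=\; \rk(\iota_2 \sigma) + \rk(\sigma/\iota_2) \;\geq\; \rk(\beta) + \rk(\alpha),$$
where the sum is interpreted in $[0, +\infty]$, so the inequality is automatic whenever either $\rk(\alpha)$ or $\rk(\beta)$ is infinite.

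There is no real obstacle here; the only point requiring minor care is verifying the cokernel identifications and checking that the induced map $\sigma/\iota_2$ has the stated form. Once this is done, the conclusion follows mechanically from the additivity and monotonicity properties of $\rk$.
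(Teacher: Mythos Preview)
Your proof is correct and follows essentially the same route as the paper's own argument: both apply Corollary~\ref{C-additivity} to the pair $(\iota_2,\sigma)$, identify $\sigma/\iota_2$ as a map $\cM_1\to \cM_3\oplus(\cM_4/\im(\beta))$, and bound the two summands via projections using condition~(2) of Definition~\ref{D-map}. The only cosmetic difference is that you establish the equality $\rk(\iota_2\sigma)=\rk(\beta)$ by factoring in both directions, whereas the paper records only the inequality $\rk(\iota_2\sigma)\ge\rk(\beta)$ actually needed.
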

\begin{proof} Set $\theta=\left[\begin{matrix} \alpha & \gamma\\ & \beta \end{matrix}\right]: \cM_1\oplus \cM_2\rightarrow \cM_3\oplus \cM_4$. Denote by $\iota$ the embedding $\cM_2\rightarrow \cM_1\oplus \cM_2$.
Note that $\theta/\iota: \cM_1\rightarrow \cM_3\oplus (\cM_4/\im(\beta))$. Denote by
 $p$ the projection $\cM_3\oplus \cM_4\rightarrow \cM_4$, and by $q$ the projection $\cM_3\oplus (\cM_4/\im(\beta))\rightarrow \cM_3$.
 Then $\iota\theta p=\beta$ and $(\theta/\iota)q=\alpha$.
 From the condition (2)  of Definition~\ref{D-map} we have
$\rk(\iota\theta)\ge \rk(\iota\theta p)=\rk(\beta)$ and $\rk(\theta/\iota)\ge \rk((\theta/\iota)q)=\rk(\alpha)$.
Then from Corollary~\ref{C-additivity} we obtain
$$\rk(\theta)=\rk(\iota \theta)+\rk(\theta/\iota)\ge \rk(\beta)+\rk(\alpha).$$
\end{proof}

\section{Induced Rank Functions} \label{S-induce}

In this section we discuss how one extended Sylvester map rank function for one ring induces an extended Sylvester map rank function for another ring via a bimodule.

Let $S$ be a unital ring with an extended Sylvester map rank function $\rk_S$. Let $R$ be a unital ring and let ${}_S\cN_R$ be an $S$-$R$-bimodule with $0<\rk_S(\id_\cN)<+\infty$.
For any map $\alpha: \cM_1\rightarrow \cM_2$ between $R$-modules, we define
\begin{align} \label{E-induced}
f_\cN^*(\rk_S)(\alpha):=\rk_S(\id_\cN\otimes_R\alpha)/\rk_S(\id_\cN).
\end{align}
Since the tensor functor $\cN\otimes_R\cdot$ preserves direct limits \cite[Theorem B-7.15]{Rotman},
using Corollary~\ref{C-direct limit map}
it is easy to conclude that $f_\cN^*(\rk_S)$ is an extended Sylvester map rank function for $R$.

Let $Q$ be a unital ring and ${}_R\cW_Q$  an $R$-$Q$-bimodule. When $0<f_\cN^*(\rk_S)(\id_\cW)=\rk_S(\id_{\cN\otimes_R\cW})/\rk_S(\id_\cN)<+\infty$, we can also define the extended Sylvester map rank functions $f_\cW^*(f_\cN^*(\rk_S))$ and $f_{\cN\otimes_R\cW}^*(\rk_S)$ for $Q$. Clearly we have
$$ f_\cW^*(f_\cN^*(\rk_S))=f_{\cN\otimes_R\cW}^*(\rk_S).$$

Next we consider a few special cases of this construction.

Let $\pi$ be a unital ring homomorphism from $R$ to $S$. Then we may apply the above construction to ${}_SS_R$ and every $\rk_S\in \Pb(S)$. Denoting $f_{{}_SS_R}^*(\rk_S)$ by $\pi^*(\rk_S)$, we obtain  a map $\pi^*: \Pb(S)\rightarrow \Pb(R)$. Explicitly, we have
\begin{align} \label{E-pull back}
\pi^*(\rk_S)(\alpha):=\rk_S(\id_S\otimes_R\alpha)
\end{align}
for any map $\alpha: \cM_1\rightarrow \cM_2$ between $R$-modules.
If we treat $\rk_S$ as a Sylvester matrix rank function for $S$, then clearly
\begin{align} \label{E-pull back rank}
\pi^*(\rk_S)(A)=\rk_S(\pi(A))
\end{align}
for all rectangular matrices $A$ over $R$. Thus $\pi^*$ is continuous and affine.

Conversely, suppose that $\rk_R$ is an extended Sylvester map rank function for $R$, and that $\pi$ is a unital ring homomorphism from $R$ to $S$ such that $0<\rk_R(\id_{S})<+\infty$. (A nontrivial example of this situation is given in Theorem~\ref{T-range} below.)
Then we can apply the above construction to ${}_RS_S$. In this case
\begin{align} \label{E-push forward}
 f_{{}_RS_S}^*(\rk_R)(\beta)=\rk_R(\beta)/\rk_R(\id_{S})
\end{align}
for all maps $\beta: \cN_1\rightarrow \cN_2$ between $S$-modules.

\begin{remark} \label{R-Morita}
When $R$ and $S$ are Morita equivalent unital rings, given Morita equivalence bimodules ${}_R\cW_S$ and ${}_S\cV_R$ \cite[Section 18]{Lam},
there is a natural  homeomorphism between $\Pb(R)$ and $\Pb(S)$ preserving the extremal points as follows.
Note that since ${}_S\cV$ is finitely generated projective and ${}_S\cV^n={}_SS\oplus {}_S\cV'$ for some $n\in \Nb$ and ${}_S\cV'$, we have $0<\rk_S(\id_\cV)<+\infty$ for any extended Sylvester map function $\rk_S$ for $S$. Thus the map $f_{\cV}^*: \Pb(S)\rightarrow \Pb(R)$ is defined and continuous.
Similarly the  map $f_{\cW}^*: \Pb(R)\rightarrow \Pb(S)$ is defined and continuous. Then $f_{\cW}^*f_{\cV}^*=f_{\cV\otimes_R \cW}^*=f_{S}^*$ is the identity map on $\Pb(S)$. Similarly, $f_{\cV}^*f_{\cW}^*$ is the identity map on $\Pb(R)$. Thus $f_{\cV}^*$ and $f_{\cW}^*$ are homeomorphisms and are inverse to each other. Let $\rk
\in \Pb(S)$ be non-extremal. Then $\rk=\lambda_1\rk_1+\lambda_2\rk_2$ for some distinct $\rk_1, \rk_2\in \Pb(S)$ and  $\lambda_1, \lambda_2>0$ with $\lambda_1+\lambda_2=1$. Note that $$f_{\cV}^*(\rk)=\frac{\lambda_1\rk_1(\id_\cV)}{\lambda_1\rk_1(\id_\cV)+\lambda_2\rk_2(\id_\cV)}f_{\cV}^*(\rk_1)+\frac{\lambda_2\rk_2(\id_\cV)}{\lambda_1\rk_1(\id_\cV)+\lambda_2\rk_2(\id_\cV)}f_{\cV}^*(\rk_2).$$
Thus $f_{\cV}^*(\rk)$ is not extremal.
\end{remark}

\section{Epimorphisms} \label{S-epic}

In this section we study the map on Sylvester rank functions induced by epimorphisms.

Let $R$ and $S$ be unital rings. A unital ring homomorphism $\pi: R\rightarrow S$ is called an {\it epimorphism} if for any unital ring $Q$ and any unital ring homomorphisms $\alpha, \beta: S\rightarrow Q$, if $\pi\circ \alpha=\pi\circ \beta$, then $\alpha=\beta$. For example, if $S$ is a division ring and $\im(\pi)$ generates $S$ as a division ring, then $\pi$ is an epimorphism. We refer the reader to \cite[Section XI.1]{Stenstrom} for basic facts about epimorphisms.

\begin{theorem} \label{T-epic to injective}
Let $\pi: R\rightarrow S$ be an epimorphism between unital rings. Let $\rk_S$ be an extended Sylvester map rank function for $S$.
Denote by $\rk_R$
the extended Sylvester map rank function
for $R$ defined via \eqref{E-pull back}. For any map $\alpha: \cN_1\rightarrow \cN_2$ between  $S$-modules, we have
\begin{align} \label{E-injective}
 \rk_S(\alpha)=\rk_R(\alpha).
 \end{align}
In particular, the map $\pi^*: \Pb(S)\rightarrow \Pb(R)$ defined by \eqref{E-pull back} and \eqref{E-pull back rank} is injective.
\end{theorem}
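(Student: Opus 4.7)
The plan is to invoke the classical characterization of epimorphisms of unital rings: $\pi:R\rightarrow S$ is an epimorphism if and only if the multiplication map $S\otimes_R S\rightarrow S$ is an isomorphism; equivalently, for every $S$-module $\cN$ the canonical $S$-module homomorphism
\[
\mu_\cN:S\otimes_R \cN\rightarrow \cN,\qquad s\otimes n\mapsto sn,
\]
(where $\cN$ is viewed as an $R$-module via $\pi$ in forming the tensor product) is an isomorphism. This is standard and is available in Stenstr\"{o}m, Chapter~XI, which is already cited in the paper.

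Granted this, I would deduce \eqref{E-injective} as follows. For the given $S$-module map $\alpha:\cN_1\rightarrow \cN_2$, naturality of $\mu$ produces a commutative square
\[
\begin{CD}
S\otimes_R \cN_1 @>{\id_S\otimes_R\alpha}>> S\otimes_R \cN_2\\
@V{\mu_{\cN_1}}VV @VV{\mu_{\cN_2}}V\\
\cN_1 @>{\alpha}>> \cN_2
\end{CD}
\]
whose vertical arrows are $S$-module isomorphisms. Every extended Sylvester map rank function is an isomorphism invariant, which is a direct consequence of condition~(2) of Definition~\ref{D-map}: for isomorphisms $\gamma,\delta$ witnessing $\gamma\beta=\alpha\delta$ one has $\rk(\beta)=\rk(\gamma^{-1}\alpha\delta)\le \rk(\alpha)$ and symmetrically $\rk(\alpha)\le \rk(\beta)$. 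Hence $\rk_S(\id_S\otimes_R\alpha)=\rk_S(\alpha)$, and the definition \eqref{E-pull back} of $\rk_R=\pi^*(\rk_S)$ rewrites the left-hand side as $\rk_R(\alpha)$, yielding \eqref{E-injective}.

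The ``in particular'' statement is then immediate. Suppose $\rk_S^{(1)},\rk_S^{(2)}\in \Pb(S)$ satisfy $\pi^*(\rk_S^{(1)})=\pi^*(\rk_S^{(2)})$. For any rectangular matrix $A$ over $S$, applying \eqref{E-injective} to the associated $S$-module map $\alpha_A:S^n\rightarrow S^m$ gives $\rk_S^{(j)}(A)=\rk_S^{(j)}(\alpha_A)=\pi^*(\rk_S^{(j)})(\alpha_A)$ for $j=1,2$, so the two Sylvester matrix rank functions on $S$ agree on every matrix, whence $\rk_S^{(1)}=\rk_S^{(2)}$. There is no real obstacle beyond importing the epimorphism characterization: the entire argument reduces to the identity $S\otimes_R\cN\cong \cN$ for $S$-modules $\cN$, which automatically transports Sylvester ranks between maps of $S$-modules and their restrictions to $R$-modules.
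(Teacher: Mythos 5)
Your proof is correct and follows the paper's argument essentially verbatim: it invokes the characterization that $S\otimes_R\cN\to\cN$ is an isomorphism for all $S$-modules $\cN$ when $\pi$ is an epimorphism, then uses isomorphism-invariance of the rank to conclude $\rk_S(\alpha)=\rk_S(\id_S\otimes_R\alpha)=\rk_R(\alpha)$. The extra details you supply (deducing isomorphism-invariance from condition~(2) of Definition~\ref{D-map}, and spelling out the injectivity via Sylvester matrix rank functions on $S$) are both correct and implicit in the paper's one-line proof.
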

\begin{proof}
Since $\pi$ is an epimorphism,
for any $S$-module $\cN$, the map $S\otimes_R \cN\rightarrow \cN$ sending $a\otimes x$ to $ax$ is an isomorphism of $S$-modules \cite[Proposition XI.1.2]{Stenstrom}.
Thus for any map $\alpha: \cN_1\rightarrow \cN_2$ between $S$-modules, we have
$$\rk_S(\alpha)=\rk_S(\id_S\otimes_R\alpha)=\rk_R(\alpha).$$
\end{proof}

The injectivity part of Theorem~\ref{T-epic to injective} answers a question of Jaikin-Zapirain \cite[Question 5.10]{JZ17} affirmatively and  was proved by him \cite[Proposition 5.11]{JZ19} under the further assumption that $S$ is von Neumann regular, which is vital for his proof of the uniqueness of $*$-regular $R$-algebras associated with a faithful $*$-regular Sylvester matrix rank function for $R$ \cite[Theorem 6.3]{JZ19}.
Note that $S$  may not even be finitely generated as an $R$-module. Thus the formula \eqref{E-injective} does not make sense if we stick to Sylvester map rank functions.

The following result describes the image of $\pi^*$ for epimorphisms $\pi$.

\begin{theorem} \label{T-range}
Let $\pi: R\rightarrow S$ be an epimorphism between unital rings. For any extended Sylvester map rank function  $\rk_R$ for $R$, the following are equivalent:
\begin{enumerate}
\item $\rk_R\in \pi^*(\Pb(S))$.
\item $\rk_R(\id_S\otimes_R \alpha)=\rk_R(\alpha)$ for any map $\alpha: \cM_1\rightarrow \cM_2$ between $R$-modules.
\item $\rk_R(\id_S\otimes_R \id_\cM)=\rk_R(\id_\cM)$ for any finitely presented $R$-module $\cM$.
\item $\rk_R(\pi)=\rk_R(\id_S)=1$.
\end{enumerate}
\end{theorem}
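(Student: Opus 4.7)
The plan is to establish (1)$\Rightarrow$(2)$\Rightarrow$(3)$\Rightarrow$(1) together with (1)$\Rightarrow$(4) and the single substantial implication (4)$\Rightarrow$(3). The short steps are straightforward. For (1)$\Rightarrow$(2), given $\rk_R=\pi^*(\rk_S)$, note that $\id_S\otimes_R\alpha$ is always a map between $S$-modules, so Theorem~\ref{T-epic to injective} applied to $\rk_S$ yields $\rk_R(\id_S\otimes_R\alpha)=\rk_S(\id_S\otimes_R\alpha)=\pi^*(\rk_S)(\alpha)=\rk_R(\alpha)$. The implication (2)$\Rightarrow$(3) is immediate on taking $\alpha=\id_\cM$. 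For (1)$\Rightarrow$(4), $\pi$ being an epimorphism gives $S\otimes_R S\cong S$, and under the canonical identifications of $S\otimes_R R$ and $S\otimes_R S$ with $S$ both $\id_S\otimes_R\id_S$ and $\id_S\otimes_R\pi$ become $\id_S$; hence $\rk_R(\id_S)=\rk_R(\pi)=\rk_S(\id_S)=1$.

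For (3)$\Rightarrow$(1) I will produce the witness $\rk_S$ by restriction of scalars. Define $\rk_S(\beta):=\rk_R(\beta)$ for any map $\beta$ between $S$-modules, viewed as an $R$-map via $\pi$. The normalization $\rk_S(\id_S)=1$ follows from (3) with $\cM=R$ together with $S\otimes_R R\cong S$; the submultiplicativity, block-diagonal, and cokernel-additivity axioms transfer verbatim from $\rk_R$ because restriction of scalars preserves kernels, cokernels, and underlying abelian groups; and the two continuity axioms transfer because a direct system of $S$-modules is a direct system of $R$-modules with the same direct limit. Hence $\rk_S\in\Pb(S)$. For every finitely presented $R$-module $\cM$ we have $\pi^*(\rk_S)(\id_\cM)=\rk_S(\id_{S\otimes_R\cM})=\rk_R(\id_{S\otimes_R\cM})=\rk_R(\id_\cM)$ by~(3), and Corollary~\ref{C-unique} forces $\pi^*(\rk_S)=\rk_R$.

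The main obstacle is (4)$\Rightarrow$(3), which is the only place where Corollary~\ref{C-additivity} is used in earnest. Cokernel-additivity applied to $\pi:R\to S$ together with (4) gives $\rk_R(\id_{\coker\pi})=0$, whence $\rk_R(\id_{(\coker\pi)^k})=0$ for every $k$; by submultiplicativity any $R$-map with source $(\coker\pi)^k$ then has rank $0$. Fix a finitely presented $\cM$ with presentation $\phi:R^n\to R^m$, and set $\phi_S=\id_S\otimes_R\phi$; since $\rk_R(\id_{S^m})=m$ by~(4), cokernel-additivity reduces $\rk_R(\id_\cM)=\rk_R(\id_{S\otimes_R\cM})$ to $\rk_R(\phi)=\rk_R(\phi_S)$. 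The commuting square $\phi\pi^m=\pi^n\phi_S$ and Corollary~\ref{C-additivity} give
\[
\rk_R(\phi_S)=\rk_R(\pi^n\phi_S)+\rk_R(\phi_S/\pi^n)=\rk_R(\phi\pi^m),
\]
since $\phi_S/\pi^n$ has source $\coker(\pi^n)=(\coker\pi)^n$ and thus rank $0$. A second application of Corollary~\ref{C-additivity} yields $m=\rk_R(\phi\pi^m)+\rk_R(\pi^m/\phi)$, and it remains to identify $\rk_R(\phi\pi^m)$ with $\rk_R(\phi)$.

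The inequality $\rk_R(\phi\pi^m)\le\rk_R(\phi)$ is immediate from submultiplicativity and $\rk_R(\pi^m)=m\ge\rk_R(\phi)$. For the reverse, observe that $\pi^m/\phi:\cM\to S^m/\pi^m(\im\phi)$ has vanishing composition with the projection $S^m/\pi^m(\im\phi)\twoheadrightarrow(\coker\pi)^m$ (each $(x)\pi^m$ lies in $\pi(R)^m$), so it factors through $\pi(R)^m/\pi^m(\im\phi)\cong\cM/(\ker\pi)\cM=\pi(R)\otimes_R\cM$, with the first arrow of the factorization being the natural surjection $\cM\twoheadrightarrow\pi(R)\otimes_R\cM$. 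Submultiplicativity then gives $\rk_R(\pi^m/\phi)\le\rk_R(\id_{\pi(R)\otimes_R\cM})\le\rk_R(\id_\cM)=m-\rk_R(\phi)$, which combined with $m=\rk_R(\phi\pi^m)+\rk_R(\pi^m/\phi)$ forces $\rk_R(\phi\pi^m)\ge\rk_R(\phi)$. Combining the two bounds completes the proof.
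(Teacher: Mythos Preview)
Your proof is correct and follows the same overall scheme as the paper: the cycle (1)$\Rightarrow$(2)$\Rightarrow$(3)$\Rightarrow$(1), and the equivalence with (4) via (4)$\Rightarrow$(3). The arguments for (1)$\Rightarrow$(2), (2)$\Rightarrow$(3), (3)$\Rightarrow$(1), and the reduction of (4)$\Rightarrow$(3) to $\rk_R(\phi)=\rk_R(\phi_S)=\rk_R(\phi\pi^m)$ via Corollary~\ref{C-additivity} and $\rk_R(\id_{\coker(\pi^n)})=0$ match the paper essentially verbatim.

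Where you genuinely diverge is in establishing $\rk_R(\phi\pi^m)\ge\rk_R(\phi)$. The paper passes to the bivariant module rank function $\dim_R(\cdot|\cdot)$, computes $\dim_R(\ker(\pi^m)|R^m)=0$, and then invokes Proposition~\ref{P-stable} (to replace the ambient module $S^m$ by $\im(\pi^m)$) together with Theorem~\ref{T-additivity}. You instead apply Corollary~\ref{C-additivity} a second time, to the pair $(\phi,\pi^m)$, obtaining $m=\rk_R(\phi\pi^m)+\rk_R(\pi^m/\phi)$, and then bound $\rk_R(\pi^m/\phi)$ by noting that this map factors through the surjection $q:\cM\twoheadrightarrow\cM/(\ker\pi)\cM$, so $\rk_R(\pi^m/\phi)\le\rk_R(q)\le\rk_R(\id_\cM)=m-\rk_R(\phi)$. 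This is a clean alternative that stays entirely within the extended map rank function axioms and avoids both the bivariant module formulation and Proposition~\ref{P-stable}; the paper's route, by contrast, illustrates how the module side of the theory handles the same computation. One minor point: your chain $\rk_R(\pi^m/\phi)\le\rk_R(\id_{\pi(R)\otimes_R\cM})\le\rk_R(\id_\cM)$ is correct, but the second inequality uses cokernel-additivity (for the surjection $\cM\twoheadrightarrow\pi(R)\otimes_R\cM$) rather than pure submultiplicativity; alternatively you can go directly $\rk_R(\pi^m/\phi)\le\rk_R(q)\le\rk_R(\id_\cM)$ by submultiplicativity alone and drop the intermediate term.
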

\begin{proof} (1)$\Rightarrow$(2): Assume that $\rk_R=\pi^*(\rk_S)$ for some   extended Sylvester map rank function  $\rk_S$ for $S$. For any  map $\alpha: \cM_1\rightarrow \cM_2$ between $R$-modules, we have
$$ \rk_R(\alpha)\overset{\eqref{E-pull back}}=\rk_S(\id_S\otimes_R\alpha)\overset{\eqref{E-injective}}{=}\rk_R(\id_S\otimes_R\alpha).$$

(2)$\Rightarrow$(3) is trivial.

(3)$\Rightarrow$(1): From (3) we have $\rk_R(\id_S)=\rk_R(\id_S\otimes_R \id_R)=\rk_R(\id_R)=1$. Thus we have the extended Sylvester map rank function $\rk_S:=f_{{}_RS_S}^*(\rk_R)$ for $S$ defined via \eqref{E-push forward}. Then $\rk_S(\alpha)=\rk_R(\alpha)$ for all maps $\alpha$ between $S$-modules.
We are left to show that $\pi^*(\rk_S)=\rk_R$.
Set $\rk_R'=\pi^*(\rk_S)$. Then (3) means $\rk_R'(\id_\cM)=\rk_R(\id_\cM)$ for all finitely presented $R$-modules $\cM$.
From Corollary~\ref{C-unique} we conclude that $\rk_R'=\rk_R$.

(2)$\Rightarrow$(4):
From (2) we have $\rk_R(\id_S)=\rk_R(\id_S\otimes_R\id_R)=\rk_R(\id_R)=1$.
Since $\pi$ is an epimorphism, the natural $S$-bimodule map $S\otimes_RS\rightarrow S$ sending $a\otimes b$ to $ab$ is an isomorphism \cite[Proposition XI.1.2]{Stenstrom}.
Thus by (2) we have $\rk_R(\pi)=\rk_R(\id_S\otimes_R \pi)=\rk_R(\id_S)=1$.

(4)$\Rightarrow$(3): For any $m\in \Nb$ we have $\rk_R(\id_{S^m})=m\rk_R(\id_S)=m$. Let $\cM$ be a finitely presented $R$-module. Write $\cM$ as $\coker(\alpha)$ for some $n,m \in \Nb$ and some map $\alpha: R^n\rightarrow R^m$. Then $S\otimes_R\cM$ is the cokernel of $\id_S\otimes_R\alpha: S^n\cong(S\otimes_RR)^n\rightarrow (S\otimes_RR)^m\cong S^m$. Note that
$$\rk_R(\id_\cM)=\rk_R(\id_{R^m})-\rk_R(\alpha)=m-\rk_R(\alpha),$$
and
$$\rk_R(\id_S\otimes_R\id_\cM)=\rk_R(\id_{S^m})-\rk_R(\id_S\otimes_R \alpha)=m-\rk_R(\id_S\otimes_R \alpha).$$
Thus it suffices to show $\rk_R(\alpha)=\rk_R(\id_S\otimes_R \alpha)$. We have the commutative diagram
\begin{equation*}
\xymatrix
{
& R^n \ar[d]_{\pi^n}  \ar[r]_{\alpha} & R^m  \ar[d]^{\pi^m} \\
& S^n \ar[r]^{\id_S\otimes_R \alpha} & S^m
}
\end{equation*}
Note that $\rk_R(\id_{\coker(\pi)})=\rk_R(\id_S)-\rk_R(\pi)=0$, and hence $\rk_R(\id_{\coker(\pi^n)})=n\rk_R(\id_{\coker(\pi)})=0$. Thus
$\rk_R((\id_S\otimes_S\alpha)/\pi^n)=0$. By Corollary~\ref{C-additivity} we get
$$\rk_R(\id_S\otimes_R \alpha)=\rk_R(\pi^n\circ (\id_S\otimes_R \alpha))+\rk_R((\id_S\otimes_R \alpha)/\pi^n)=\rk_R(\pi^n\circ (\id_S\otimes_R \alpha)).$$
Denote by $\dim_R(\cdot|\cdot)$ the bivariant Sylvester module rank function for $R$ corresponding to $\rk_R$. Then $\dim_R(\im(\pi^m)|S^m)=\rk_R(\pi^m)=m\rk_R(\pi)=m$.
We also have $  \dim_R(\im(\pi^m)|S^m)\le \dim_R(\im(\pi^m))\le \dim_R(R^m)=m$. Thus $\dim_R(\im(\pi^m))=\dim_R(\im(\pi^m)|S^m)=m$.
Then
$$\dim_R(\ker(\pi^m)|R^m)=\dim_R(R^m)-\dim_R(\im(\pi^m))=m-m=0.$$
It follows that
\begin{align*}
\rk_R(\alpha\circ \pi^m)&=\dim_R(\im(\alpha\circ \pi^m)|S^m)\\
&=\dim_R(\im(\alpha\circ \pi^m)|\im(\pi^m))\\
&=\dim_R(\im(\alpha)+\ker(\pi^m)|R^m)-\dim_R(\ker(\pi^m)|R^m)\\
&\ge \dim_R(\im(\alpha)|R^m)=\rk_R(\alpha),
\end{align*}
where the second equality is from Proposition~\ref{P-stable} and the third equality is from Theorem~\ref{T-additivity}. As $\rk_R(\alpha\circ \pi^m)\le \rk_R(\alpha)$, we obtain
$$ \rk_R(\alpha)=\rk_R(\alpha\circ \pi^m)=\rk_R(\pi^n\circ (\id_S\otimes_R \alpha))=\rk_R(\id_S\otimes_R \alpha).$$
\end{proof}

Let $\Sigma$ be a set of maps between finitely generated projective $R$-modules. Denote by $R_\Sigma$ the universal unital ring $S$ with a unital ring homomorphism $\pi: R\rightarrow S$ such that $\id_S\otimes_R\alpha$ is invertible as a map between $S$-modules for every $\alpha\in \Sigma$.
This construction includes the universal localization of $R$ inverting a set of square matrices over $R$ as a special case, but is much more general. For example, given any unital ring homomorphisms $R\rightarrow S$ and $R\rightarrow Q$, if we denote by $S\underset{R}{\cup} Q$ the coproduct (also called the free product) of $S$ and $Q$ amalgamated over $R$,
then $M_2(S\underset{R}{\cup} Q)$ is isomorphic to $(R')_\Sigma$ for $R'=\begin{bmatrix} S &  0 \\  Q\otimes_R S & Q    \end{bmatrix}$ and $\Sigma$ consisting of the map
$\begin{bmatrix} 0 &  0 \\  0 & Q    \end{bmatrix}\rightarrow \begin{bmatrix} S &  0 \\  Q\otimes_R S & 0    \end{bmatrix}$ sending $x$ to $x\begin{bmatrix} 0 &  0 \\  1\otimes 1 & 0    \end{bmatrix}$ for all  $x\in  \begin{bmatrix} 0 &  0 \\  0 & Q    \end{bmatrix}$ \cite[Theorem 4.10]{Schofield}.

The universal localization $R_\Sigma$ was defined via generators and relations in \cite{Bergman74}, from which it is clear that $\pi: R\rightarrow R_\Sigma$ is an epimorphism. Malcolmson gave a  more explicit description of $R_\Sigma$ in the case $\Sigma$ consists of endomorphisms of finitely generated free $R$-modules \cite{Malcolmson82}. In fact his arguments work for general case with minor modification. Denote by $\Sigma'$ the set of maps between finitely generated projective $R$-modules of the form
$$\left[\begin{matrix} \alpha_1 & \cdots &\cdots &\cdots \\ & \alpha_2 & \cdots &\cdots\\  & & \cdots & \cdots \\ & & &\alpha_n   \end{matrix}\right],$$
where each $\alpha_j$ is either in $\Sigma$, or $\id_\cM$ for some finitely generated projective $R$-module $\cM$ appearing as either the domain or codomain of some element in $\Sigma$, or $\id_R$. For any map $\alpha$ between $R$-modules, denote by $\dom(\alpha)$ and $\cod(\alpha)$ the domain and codomain of $\alpha$ respectively. Denote by $\Xi$ the set of all triples
$(f, \alpha, x)$ such that $\alpha\in \Sigma'$, $f$ is a map $R\rightarrow \cod(\alpha)$, and $x$ is a map $\dom(\alpha)\rightarrow R$.  Note that $\id_{R_\Sigma}\otimes_R\alpha$ is invertible as a map between $R_\Sigma$-modules for every $\alpha\in \Sigma'$.

\begin{theorem}[\cite{Malcolmson82}] \label{T-localization}
Every element of $R_\Sigma=\End_{R_\Sigma}(R_\Sigma\otimes_RR)$ is of the form $(\id_{R_\Sigma}\otimes_R f)(\id_{R_\Sigma}\otimes_R\alpha)^{-1}(\id_{R_\Sigma}\otimes_R x)$ for some $(f, \alpha, x)\in \Xi$. Furthermore, for any $(g, \beta, y)\in \Xi$, $(\id_{R_\Sigma}\otimes_R f)(\id_{R_\Sigma}\otimes_R\alpha)^{-1}(\id_{R_\Sigma}\otimes_R x)=(\id_{R_\Sigma}\otimes_R g)(\id_{R_\Sigma}\otimes_R\beta)^{-1}(\id_{R_\Sigma}\otimes_R y)$ if and only if
one has
$$\left[\begin{array}{cccc|c} \alpha & 0 & 0 & 0 & x \\ 0 & \beta & 0 & 0 & -y \\  0 & 0 & \gamma & 0 & 0\\ 0 & 0 & 0 & \theta & w   \\ \hline f & g & h & 0 & 0\end{array}\right]=
\left[\begin{array}{cc} \zeta  \\ \hline u \end{array}\right] \left[\begin{array}{c|c} \eta  & v \end{array}\right] \
$$
for some $\gamma,\theta, \zeta, \eta\in \Sigma'$ and some maps $h: R\rightarrow \cod(\gamma), w: \dom(\theta)\rightarrow R, u:R\rightarrow \cod(\zeta), v: \dom(\eta)\rightarrow R$.
\end{theorem}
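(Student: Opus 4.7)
The plan is to follow the standard Cohn--Bergman--Malcolmson construction: build an explicit candidate ring $T$ whose elements are equivalence classes of triples in $\Xi$ under the matrix-factorization relation stated in the theorem, equip $T$ with ring operations realizing the formal expression $f\alpha^{-1}x$, and then identify $T$ with $R_\Sigma$ via two opposing applications of the universal property. Once this identification is in place, both the existence of a normal form $(f,\alpha,x)$ for an arbitrary element of $R_\Sigma$ and the equivalence criterion fall out simultaneously.

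First I would verify that the stated relation on $\Xi$ is an equivalence relation. Reflexivity and symmetry are formal, allowing padding by identity maps of the finitely generated projective modules appearing as domains or codomains of elements of $\Sigma$. The substantive step is transitivity: given witnessing factorizations for $(f,\alpha,x)\sim(g,\beta,y)$ and $(g,\beta,y)\sim(h,\delta,z)$, one must nest the two block decompositions into a common one, which is a careful but purely combinatorial exercise of merging diagonal slots and extending the outer column and row. Next, I would put a ring structure on $T=\Xi/{\sim}$ via
$$[(f,\alpha,x)] + [(g,\beta,y)] = \left[\left(\begin{bmatrix} f & g \end{bmatrix},\ \begin{bmatrix} \alpha & 0 \\ 0 & \beta \end{bmatrix},\ \begin{bmatrix} x \\ y \end{bmatrix}\right)\right],$$
$$[(f,\alpha,x)] \cdot [(g,\beta,y)] = \left[\left(\begin{bmatrix} f & 0 \end{bmatrix},\ \begin{bmatrix} \alpha & -xg \\ 0 & \beta \end{bmatrix},\ \begin{bmatrix} 0 \\ y \end{bmatrix}\right)\right],$$
with a canonical ring map $R\to T$ realizing scalars as triples of the form $(r,\id_R,\id_R)$; well-definedness of each operation on classes reduces to the same block-matrix manipulation as transitivity.

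Finally I would identify $T$ with $R_\Sigma$. On one hand, for each $\alpha\in\Sigma$ the map $\id_T\otimes_R\alpha$ is invertible in $T$, with inverse assembled entrywise from classes of triples involving $\alpha$ together with the canonical inclusions and projections of $\dom(\alpha)$ and $\cod(\alpha)$; the universal property of $R_\Sigma$ then produces a ring homomorphism $R_\Sigma\to T$. Conversely, the rule
$$[(f,\alpha,x)]\longmapsto(\id_{R_\Sigma}\otimes_R f)(\id_{R_\Sigma}\otimes_R\alpha)^{-1}(\id_{R_\Sigma}\otimes_R x)$$
is well-defined on classes because after tensoring the displayed matrix identity with $R_\Sigma$ the outer block matrices become invertible and force equality of the corresponding products; this yields a ring homomorphism $T\to R_\Sigma$ that is two-sided inverse to the first on generators. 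The main obstacle throughout is the block-matrix bookkeeping that underlies transitivity of $\sim$ and well-definedness of multiplication; once that is dispatched, surjectivity of the normal form in the first assertion and the equivalence criterion in the second both follow directly from the isomorphism $T\cong R_\Sigma$.
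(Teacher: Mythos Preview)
The paper does not actually supply a proof of this theorem: it is quoted directly from \cite{Malcolmson82}, with the remark that Malcolmson's arguments for the case of square matrices over $R$ carry over to maps between finitely generated projective $R$-modules with only minor modification. So there is no ``paper's own proof'' to compare against.

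Your outline is a faithful reconstruction of Malcolmson's original argument: construct $T=\Xi/{\sim}$, give it ring operations encoding $f\alpha^{-1}x+g\beta^{-1}y$ and $(f\alpha^{-1}x)(g\beta^{-1}y)$ via block matrices, and match $T$ with $R_\Sigma$ by two applications of the universal property. The block formulas you wrote for addition and multiplication are the standard ones, and the mechanism you describe for the map $T\to R_\Sigma$ (tensoring the displayed factorization with $R_\Sigma$ so that $\zeta,\eta$ become invertible) is the right one. Two cautions: first, the claim that ``reflexivity and symmetry are formal'' understates things---symmetry already requires a nontrivial block manipulation, and in Malcolmson's treatment one in fact verifies directly that the relation is symmetric and transitive, with reflexivity handled by exhibiting a concrete factorization; second, you should be explicit that your $R\to T$ via $r\mapsto[(r,\id_R,\id_R)]$ (or equivalently $[(\id_R,\id_R,r)]$) really gives a unital ring map and that the two composites $R_\Sigma\to T\to R_\Sigma$ and $T\to R_\Sigma\to T$ agree with the identity on a generating set---the first is immediate from universality, the second requires checking on an arbitrary class $[(f,\alpha,x)]$, which again is a block computation. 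None of this is a gap in your strategy, but the phrase ``purely combinatorial exercise'' hides several pages of careful work; this is exactly the ``main obstacle'' you flag at the end.
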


The following result is \cite[Theorem 7.4]{Schofield}. Here we use Theorem~\ref{T-range} to give a new proof.

\begin{theorem} \label{T-localization rank}
Let $\rk\in \Pb(R)$ such that $\rk(\alpha)=\rk(\id_{\dom(\alpha)})=\rk(\id_{\cod(\alpha)})$ for every $\alpha\in \Sigma$. Then $R_\Sigma$ is nonzero and $\rk\in \pi^*(\Pb(R_\Sigma))$.
\end{theorem}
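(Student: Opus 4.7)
My plan is to verify condition~(4) of Theorem~\ref{T-range}: $\rk_R(\pi) = \rk_R(\id_{R_\Sigma}) = 1$, where $\dim(\cdot|\cdot)$ and $\rk_R$ denote the bivariant module rank and extended map rank obtained from $\rk$ via Theorems~\ref{T-extension} and~\ref{T-mod vs map}. Once shown, Theorem~\ref{T-range} gives $\rk\in\pi^*(\Pb(R_\Sigma))$, and $\Pb(R_\Sigma)\ne\emptyset$ forces $R_\Sigma\ne 0$. The hypothesis $\rk(\alpha)=\rk(\id_{\dom\alpha})=\rk(\id_{\cod\alpha})$ for $\alpha\in\Sigma$, applied to the exact sequences $0\to\im\alpha\to\cod\alpha\to\coker\alpha\to 0$ and $0\to\ker\alpha\to\dom\alpha\to\im\alpha\to 0$ via Theorem~\ref{T-additivity}, yields the key identities $\dim(\coker\alpha)=0$ and $\dim(\ker\alpha\,|\,\dom\alpha)=0$; both extend to $\alpha\in\Sigma'$ by block-diagonalization.

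The bound $\rk_R(\pi)\le\rk_R(\id_R)=1$ is automatic, and by Corollary~\ref{C-additivity} we have $\rk_R(\id_{R_\Sigma})=\rk_R(\pi)+\rk_R(\id_{R_\Sigma/\im\pi})$. Hence it suffices to establish (i)~$\dim_R(R_\Sigma/\im\pi)=0$ and (ii)~$\dim_R(\ker\pi\,|\,R)=0$; these together with the factorization $\pi=q\iota$ through $R/\ker\pi$ give $\rk_R(\pi)=\dim_R(\im\pi)=1-\dim_R(\ker\pi\,|\,R)=1$, whence $\rk_R(\id_{R_\Sigma})=1$ as well.

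For~(i), I use Malcolmson's Theorem~\ref{T-localization} to write every element of $R_\Sigma$ as $u_{f,\alpha,x}:=(\id_{R_\Sigma}\otimes f)(\id_{R_\Sigma}\otimes\alpha)^{-1}(\id_{R_\Sigma}\otimes x)$ for $(f,\alpha,x)\in\Xi$. For each $\alpha:P\to Q$ in $\Sigma'$, the $R$-submodule $L_\alpha:=(\id_{R_\Sigma}\otimes\alpha)^{-1}(1\otimes Q)\subseteq R_\Sigma\otimes P$ is carried bijectively onto $1\otimes Q$ by $\id\otimes\alpha$, contains $1\otimes P$ (because $(\id\otimes\alpha)(1\otimes p)=1\otimes\alpha(p)$), and the quotient $L_\alpha/(1\otimes P)$ maps isomorphically to $(1\otimes Q)/(1\otimes\im\alpha)$, a quotient of $\coker\alpha$ of dimension zero. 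Pushing through $\id\otimes x:R_\Sigma\otimes P\to R_\Sigma$ sends $L_\alpha$ into $R_\Sigma$ while taking $1\otimes P$ into $\im\pi$, so the image of each finite sum $M_{\Xi'}:=\pi(R)+\sum_{(f,\alpha,x)\in\Xi'}R\cdot u_{f,\alpha,x}$ in $R_\Sigma/\im\pi$ is built from finitely many rank-zero pieces (Propositions~\ref{P-hom for relative} and~\ref{P-sum for relative}). Continuity via Proposition~\ref{P-direct limit} and Corollary~\ref{C-direct limit} over finite $\Xi'\subseteq\Xi$ then yields~(i).

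For~(ii), I apply Malcolmson's equivalence condition in Theorem~\ref{T-localization} to the triples $(r,\id_R,\id_R)$ and $(0,\id_R,\id_R)$: the resulting matrix factorization, valid precisely when $r\in\ker\pi$, produces some $\eta\in\Sigma'$ through which multiplication-by-$r$ factors in a way that forces $\rk(r)\le\dim(\ker\eta\,|\,\dom\eta)=0$; equivalently $\dim_R(Rr\,|\,R)=0$ for each $r\in\ker\pi$. Subadditivity (Proposition~\ref{P-sum for relative}) and continuity of $\dim$ then give~(ii). The main obstacle is exactly this last step: correctly extracting from Malcolmson's matrix equivalence the factorization that routes $\lambda_r$ through $\ker\eta$, which requires careful matching of the block structure with the additive and submultiplicative behaviour of $\rk_R$ under maps in $\Sigma'$.
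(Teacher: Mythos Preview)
Your reduction to (i) $\dim_R(R_\Sigma/\im\pi)=0$ and (ii) $\dim_R(\ker\pi\,|\,R)=0$ does not suffice, because the step ``$\rk_R(\pi)=\dim_R(\im\pi)$'' is unjustified. By definition of the extended map rank, $\rk_R(\pi)=\dim_R(\im\pi\,|\,R_\Sigma)$, and in general one only has $\dim_R(\im\pi\,|\,R_\Sigma)\le\dim_R(\im\pi)$. Condition~(i) combined with additivity gives $\dim_R(R_\Sigma)=\dim_R(\im\pi\,|\,R_\Sigma)=\rk_R(\pi)$, and condition~(ii) gives $\dim_R(\im\pi)=1$; together these yield only $\rk_R(\id_{R_\Sigma})=\rk_R(\pi)\le 1$, not the crucial lower bound $\rk_R(\pi)\ge 1$. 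The factorization $\pi=q\iota$ does not help: it again only gives $\rk_R(\pi)\le\rk_R(q)=\dim_R(\im\pi)$. That the gap is genuine can be seen from Example~\ref{E-sofic}: there one has a submodule $\cM$ of a module $\cN$ with $\dim(\cN/\cM)=0$, $\dim(\cM)=1$, yet $\dim(\cM\,|\,\cN)=0$.

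The paper's proof confronts this difficulty directly: for each finitely generated $R$-submodule $\cM$ of $R_\Sigma$ it produces a surjection $\gamma:\cod(\beta)\to\im(\gamma)\supseteq\cM+\im(\pi)$ from a finitely generated projective $R$-module and, using Lemma~\ref{L-kernel} (the analogue of your step~(ii)), computes $\dim(\ker\gamma\,|\,\cod\beta)$ \emph{exactly}. This exact value, together with Theorem~\ref{T-additivity}, yields $\dim(\im\pi\,|\,\cM+\im\pi)\ge 1$, whence $\rk_R(\pi)\ge 1$ after passing to the limit. Your argument for~(i), which shows that certain quotients have absolute dimension zero, establishes $\rk_R(\id_{R_\Sigma})\le 1$ but cannot by itself supply the lower bound; the missing ingredient is precisely the relative computation inside a finitely generated projective cover that the paper carries out.
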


Fix $\rk\in \Pb(R)$ such that
\begin{align} \label{E-full}
\rk(\alpha)=\rk(\id_{\dom(\alpha)})=\rk(\id_{\cod(\alpha)})
\end{align}
for all $\alpha\in \Sigma$.
Then clearly \eqref{E-full} holds for all $\alpha\in \Sigma'$.

\begin{lemma} \label{L-localization sum}
For any map $\left[\begin{matrix} \alpha & \gamma \\ 0 & \beta \end{matrix}\right] $ between $R$-modules, if $\alpha\in \Sigma'$ or $\beta\in \Sigma'$, then
$$\rk(\left[\begin{matrix} \alpha & \gamma \\ 0 & \beta \end{matrix}\right])=\rk(\alpha)+\rk(\beta).$$
\end{lemma}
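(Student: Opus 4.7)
My plan is to prove the equality by pairing the lower bound $\rk(\theta)\ge \rk(\alpha)+\rk(\beta)$ from Corollary~\ref{C-triangular} (where $\theta$ denotes $\left[\begin{matrix}\alpha & \gamma \\ 0 & \beta\end{matrix}\right]$) with a matching upper bound obtained from Corollary~\ref{C-additivity}. Specifically, I would apply the cokernel splitting of Corollary~\ref{C-additivity} to the composition $\iota\theta$, where $\iota:\dom(\beta)\hookrightarrow\dom(\alpha)\oplus\dom(\beta)$ is the inclusion as the second summand. Then $\iota\theta=0\oplus\beta$, which has rank $\rk(\beta)$ by Definition~\ref{D-map}(3), while the induced map $\theta/\iota$ is naturally identified with
$$
\tilde\alpha:\dom(\alpha)\longrightarrow \cod(\alpha)\oplus\coker(\beta),\qquad x\longmapsto \bigl((x)\alpha,\;(x)\gamma+\im(\beta)\bigr).
$$
Corollary~\ref{C-additivity} then yields $\rk(\theta)=\rk(\beta)+\rk(\tilde\alpha)$, so the lemma reduces to the single inequality $\rk(\tilde\alpha)\le \rk(\alpha)$, which I now establish under either of the two hypotheses.

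If $\alpha\in \Sigma'$, I would observe that $\tilde\alpha = \id_{\dom(\alpha)}\cdot\tilde\alpha$, so Definition~\ref{D-map}(2) gives $\rk(\tilde\alpha)\le \rk(\id_{\dom(\alpha)})$; the hypothesis \eqref{E-full} identifies the right side with $\rk(\alpha)$, completing this case at once.

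If $\beta\in \Sigma'$, my first step is to show that $\rk(\id_{\coker(\beta)})=0$. Corollary~\ref{C-additivity} applied to the composition $\beta\cdot\id_{\cod(\beta)}$ yields $\rk(\id_{\cod(\beta)})=\rk(\beta)+\rk(\id_{\coker(\beta)})$, and the hypothesis $\rk(\beta)=\rk(\id_{\cod(\beta)})$ from \eqref{E-full} forces the cokernel identity rank to vanish. By Definition~\ref{D-map}(2), any map landing in $\coker(\beta)$ then has rank zero; in particular $\bar\gamma:\dom(\alpha)\to \coker(\beta)$, $x\mapsto (x)\gamma+\im(\beta)$, satisfies $\rk(\bar\gamma)=0$. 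I would then factor $\tilde\alpha = \Delta\cdot(\alpha\oplus\bar\gamma)$ through the diagonal $\Delta:\dom(\alpha)\to\dom(\alpha)\oplus\dom(\alpha)$, and conditions (2) and (3) of Definition~\ref{D-map} give $\rk(\tilde\alpha)\le \rk(\alpha\oplus\bar\gamma)=\rk(\alpha)+\rk(\bar\gamma)=\rk(\alpha)$.

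The main subtlety lies in the asymmetric roles of $\alpha$ and $\beta$: the upper-triangular structure places $\beta$ at the outer step of the splitting via $\iota$, so when \eqref{E-full} is a hypothesis on $\alpha$, the residual $\tilde\alpha$ is controlled almost immediately by factoring through $\id_{\dom(\alpha)}$; but when the hypothesis is instead on $\beta$, one must exploit the vanishing of $\rk$ on $\id_{\coker(\beta)}$ via the diagonal factorization, and constructing this second decomposition is the one genuinely nonobvious step of the argument.
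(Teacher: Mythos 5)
Your proof is correct, but it follows a genuinely different route to the upper bound $\rk(\theta)\le\rk(\alpha)+\rk(\beta)$ than the paper. You invoke Corollary~\ref{C-additivity} to split $\rk(\theta)=\rk(\iota\theta)+\rk(\theta/\iota)=\rk(\beta)+\rk(\tilde\alpha)$, then bound $\rk(\tilde\alpha)\le\rk(\alpha)$ by two separate arguments (a factorization through $\id_{\dom(\alpha)}$ in the first case; the vanishing of $\rk(\id_{\coker(\beta)})$ together with a diagonal factorization in the second). The paper instead proves the upper bound by a single block-matrix factorization in each case, namely $\theta=\left[\begin{smallmatrix} \id_{\dom(\alpha)} & 0 \\ 0 & \beta \end{smallmatrix}\right]\left[\begin{smallmatrix} \alpha & \gamma \\ 0 & \id_{\cod(\beta)} \end{smallmatrix}\right]$ when $\alpha\in\Sigma'$, and $\theta=\left[\begin{smallmatrix} \id_{\dom(\alpha)} & \gamma \\ 0 & \beta \end{smallmatrix}\right]\left[\begin{smallmatrix} \alpha & 0 \\ 0 & \id_{\cod(\beta)} \end{smallmatrix}\right]$ when $\beta\in\Sigma'$, and then uses only conditions (2) and (3) of Definition~\ref{D-map} together with \eqref{E-full} to conclude $\rk(\theta)\le\rk(\id_{\dom(\alpha)})+\rk(\beta)=\rk(\alpha)+\rk(\beta)$, resp.\ $\rk(\theta)\le\rk(\alpha)+\rk(\id_{\cod(\beta)})=\rk(\alpha)+\rk(\beta)$. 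The paper's argument is shorter and uses less machinery for the upper bound (no cokernel splitting is needed there), while yours is a natural way to discover the result by reducing to the residual map $\tilde\alpha$; both are valid, and your treatment of the two cases is a nice illustration of how \eqref{E-full} on $\alpha$ versus $\beta$ plays out asymmetrically through the splitting.
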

\begin{proof} By Corollary~\ref{C-triangular} we have $\rk(\left[\begin{matrix} \alpha & \gamma \\ 0 & \beta \end{matrix}\right])\ge \rk(\alpha)+\rk(\beta)$. When $\alpha\in \Sigma'$, from
$\left[\begin{matrix} \alpha & \gamma \\ 0 & \beta \end{matrix}\right]=\left[\begin{matrix} \id_{\dom(\alpha)} & 0 \\ 0 & \beta \end{matrix}\right] \left[\begin{matrix} \alpha & \gamma \\ 0 & \id_{\cod(\beta)} \end{matrix}\right]$ we get
$$ \rk(\left[\begin{matrix} \alpha & \gamma \\ 0 & \beta \end{matrix}\right])\le \rk(\left[\begin{matrix} \id_{\dom(\alpha)} & 0 \\ 0 & \beta \end{matrix}\right])=\rk(\id_{\dom(\alpha)})+\rk(\beta)=\rk(\alpha)+\rk(\beta).$$
When $\beta\in \Sigma'$, from
$\left[\begin{matrix} \alpha & \gamma \\ 0 & \beta \end{matrix}\right]=\left[\begin{matrix} \id_{\dom(\alpha)} & \gamma \\ 0 & \beta \end{matrix}\right] \left[\begin{matrix} \alpha & 0 \\ 0 & \id_{\cod(\beta)} \end{matrix}\right]$ we get
$$ \rk(\left[\begin{matrix} \alpha & \gamma \\ 0 & \beta \end{matrix}\right])\le \rk(\left[\begin{matrix} \alpha & 0 \\ 0 & \id_{\cod(\beta)} \end{matrix}\right])=\rk(\alpha)+\rk(\id_{\cod(\beta)})=\rk(\alpha)+\rk(\beta).$$
Therefore $\rk(\left[\begin{matrix} \alpha & \gamma \\ 0 & \beta \end{matrix}\right])=\rk(\alpha)+\rk(\beta)$.
\end{proof}

\begin{lemma} \label{L-kernel}
Let $(f, \alpha, x)\in \Xi$ with $(\id_{R_\Sigma}\otimes_R f)(\id_{R_\Sigma}\otimes_R\alpha)^{-1}(\id_{R_\Sigma}\otimes_R x)=0$. Then
$\rk(\left[\begin{matrix} \alpha & x  \\ f & 0 \end{matrix}\right] )=\rk(\alpha)$.
\end{lemma}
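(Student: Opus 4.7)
The strategy is to apply Theorem~\ref{T-localization} to the triples $(f,\alpha,x)$ and $(g,\beta,y):=(0,\id_R,0)$, which both represent zero in $R_\Sigma$, and then evaluate the rank of the resulting $5\times 5$ block matrix in two different ways. Setting $N:=\left[\begin{smallmatrix}\alpha & x \\ f & 0\end{smallmatrix}\right]$, the inequality $\rk(N)\ge \rk(\alpha)$ is immediate, since $\alpha=\pi N\iota$ where $\iota:\dom(\alpha)\hookrightarrow \dom(\alpha)\oplus R$ and $\pi:\cod(\alpha)\oplus R\twoheadrightarrow \cod(\alpha)$ are the obvious inclusion and projection; so by (2) of Definition~\ref{D-map}, $\rk(\alpha)\le \rk(N)$. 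All the work lies in the reverse inequality.

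By Theorem~\ref{T-localization} applied with $(g,\beta,y)=(0,\id_R,0)$, there exist $\gamma,\theta,\zeta,\eta\in \Sigma'$ and maps $h,w,u,v$ such that
$$ M := \left[\begin{array}{cccc|c} \alpha & 0 & 0 & 0 & x \\ 0 & \id_R & 0 & 0 & 0 \\ 0 & 0 & \gamma & 0 & 0 \\ 0 & 0 & 0 & \theta & w \\ \hline f & 0 & h & 0 & 0\end{array}\right] \;=\; \left[\begin{array}{c}\zeta \\ u \end{array}\right]\left[\begin{array}{cc} \eta & v\end{array}\right]. $$
On one hand, this factorization combined with (2) of Definition~\ref{D-map} yields $\rk(M)\le \rk([\eta\; v])\le \rk(\id_{\cod(\eta)})$, and since $\eta\in \Sigma'$ has diagonal blocks satisfying \eqref{E-full}, iterated application of Lemma~\ref{L-localization sum} gives $\rk(\id_{\cod(\eta)})=\rk(\alpha)+\rk(\id_R)+\rk(\gamma)+\rk(\theta)$.

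On the other hand, the block $\id_R$ at position $(2,2)$ of $M$ is isolated in its row and column, so it splits off as a direct summand; by (3) of Definition~\ref{D-map}, $\rk(M)=\rk(\id_R)+\rk(B)$, where $B$ is the $4\times 4$ block matrix obtained by deleting the second row and column of $M$. Permuting the block rows and block columns of $B$ in the order $(3,1,4,2)$ --- a rank-preserving operation by the isomorphism invariance coming from (2) of Definition~\ref{D-map} --- produces
$$ \left[\begin{array}{c|cc|c} \theta & 0 & w & 0 \\ \hline 0 & \alpha & x & 0 \\ 0 & f & 0 & h \\ \hline 0 & 0 & 0 & \gamma \end{array}\right], $$
which is block upper triangular with diagonal blocks $\theta$, $N$, $\gamma$. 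Two applications of Lemma~\ref{L-localization sum} (first using $\theta\in \Sigma'$, then $\gamma\in \Sigma'$) then yield $\rk(B)=\rk(\theta)+\rk(N)+\rk(\gamma)$.

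Combining the two computations gives
$$ \rk(\id_R)+\rk(\theta)+\rk(N)+\rk(\gamma) \;=\; \rk(M) \;\le\; \rk(\alpha)+\rk(\id_R)+\rk(\gamma)+\rk(\theta); $$
all terms are finite since the modules involved are finitely generated projective, so cancellation delivers $\rk(N)\le \rk(\alpha)$, completing the proof. The care point will be the permutation step: one must verify that after the reordering the off-diagonal entries $w$ and $h$ both land strictly above the diagonal, so that Lemma~\ref{L-localization sum} genuinely applies at each of the two stages.
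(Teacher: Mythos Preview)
Your proof follows the paper's approach almost exactly: apply Theorem~\ref{T-localization} with $(g,\beta,y)=(0,\id_R,0)$, compute $\rk$ of the $5\times 5$ block matrix via Lemma~\ref{L-localization sum} after a permutation, and bound the factored side by the rank of an identity map. The only cosmetic difference is that the paper bounds via the factor $\left[\begin{smallmatrix}\zeta\\ u\end{smallmatrix}\right]$ whereas you use $[\eta\ v]$.

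There is one small slip worth fixing. The inequality $\rk([\eta\ v])\le \rk(\id_{\cod(\eta)})$ does not follow directly from condition~(2) of Definition~\ref{D-map}, because the codomain of $[\eta\ v]$ is $\cod(\eta)\oplus R$, not $\cod(\eta)$. What you do get from condition~(2) is $\rk([\eta\ v])\le \rk(\id_{\dom(\eta)})$; then $\rk(\id_{\dom(\eta)})=\rk(\id_{\cod(\eta)})$ because $\eta\in\Sigma'$ satisfies \eqref{E-full}. (This is exactly why the paper's choice of the other factor is slightly cleaner: for $\left[\begin{smallmatrix}\zeta\\ u\end{smallmatrix}\right]$ the codomain genuinely is $\cod(\zeta)$.) Finally, the value $\rk(\id_{\cod(\eta)})=\rk(\alpha)+\rk(\id_R)+\rk(\gamma)+\rk(\theta)$ comes simply from the direct-sum condition~(3) together with \eqref{E-full}, using that the factorization forces $\cod(\eta)=\cod(\alpha)\oplus R\oplus\cod(\gamma)\oplus\cod(\theta)$; Lemma~\ref{L-localization sum} is not needed for that step, and the ``diagonal blocks of $\eta$'' are not themselves $\alpha,\id_R,\gamma,\theta$.
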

\begin{proof} Since $\alpha$ is a composition of $\left[\begin{matrix} \alpha & x  \\ f & 0 \end{matrix}\right]$ and some other maps, we have $\rk(\left[\begin{matrix} \alpha & x  \\ f & 0 \end{matrix}\right] )\ge \rk(\alpha)$.
By Theorem~\ref{T-localization} we have
$$\left[\begin{array}{cccc|c} \alpha & 0 & 0 & 0 & x \\ 0 & \id_R & 0 & 0 & 0 \\  0 & 0 & \gamma & 0 & 0\\ 0 & 0 & 0 & \theta & w   \\ \hline f & 0 & h & 0 & 0\end{array}\right]=\left[\begin{array}{cc} \zeta  \\ \hline u \end{array}\right] \left[\begin{array}{c|c} \eta  & v \end{array}\right] $$
for some $\gamma,\theta, \zeta, \eta\in \Sigma'$ and some maps $h: R\rightarrow \cod(\gamma), w: \dom(\theta)\rightarrow R, u:R\rightarrow \cod(\zeta), v: \dom(\eta)\rightarrow R$.
From Lemma~\ref{L-localization sum} we have
$$ \rk({\rm LHS})=\rk(\left[\begin{matrix} \alpha & x  \\ f & 0 \end{matrix}\right])+\rk(\id_R)+\rk(\gamma)+\rk(\theta).$$
Note that
\begin{align*}
\rk({\rm RHS})&\le \rk(\left[\begin{matrix} \zeta  \\ u \end{matrix}\right])\le \rk(\id_{\cod(\zeta)})=\rk(\id_{\dom(\zeta)})\\
&=\rk(\id_{\dom(\alpha)})+\rk(\id_R)+\rk(\id_{\dom(\gamma)})+\rk(\id_{\dom(\theta)})\\
&=\rk(\alpha)+\rk(\id_R)+\rk(\gamma)+\rk(\theta).
\end{align*}
Since $\rk({\rm LHS})=\rk({\rm RHS})$, we get
 $\rk(\left[\begin{matrix} \alpha & x  \\ f & 0 \end{matrix}\right])\le \rk(\alpha)$. This finishes the proof.
\end{proof}

We are ready to prove Theorem~\ref{T-localization rank}.

\begin{proof}[Proof of Theorem~\ref{T-localization rank}]
Note that $(\id_R, \id_R, \id_R)\in \Xi$. Since $\rk(\left[\begin{matrix} \id_R & \id_R  \\ \id_R & 0 \end{matrix}\right])=2>\rk(\id_R)$, from Lemma~\ref{L-kernel} we have
$1_{R_\Sigma}\neq 0$. Thus $R_\Sigma$ is nonzero.

By Theorem~\ref{T-range} we just need to show $\rk(\pi)=\rk(\id_{R_\Sigma})=1$. Since $\rk(\pi)\le \rk(\id_{R_\Sigma})$, it suffices to show $\rk(\pi)\ge 1$ and $\rk(\id_{R_\Sigma})\le 1$. Denote by $\dim(\cdot|\cdot)$ the bivariant Sylvester module rank function for $R$ corresponding to $\rk$.

Let $\cM$ be a finitely generated $R$-submodule of $R_\Sigma$. Say, $\cM$ is generated by $(\id_{R_\Sigma}\otimes_R f_j)(\id_{R_\Sigma}\otimes_R\alpha_j)^{-1}(\id_{R_\Sigma}\otimes_R x_j)$ with $(f_j, \alpha_j, x_j)\in \Xi$ for $j=1, \dots, n$. Set
$$ \theta=\left[\begin{matrix} \alpha_1 &  & & \\ & \alpha_2 &  & \\  & & \ddots &  \\ & & &\alpha_n   \end{matrix}\right]\in \Sigma', \quad \beta=\left[\begin{matrix} \theta  &  \\ & \id_R \end{matrix}\right]\in \Sigma', $$
and
$$z=\left[\begin{matrix} x_1 \\ x_2 \\  \vdots \\ x_n   \end{matrix}\right]: \dom(\theta)\rightarrow R,\quad  y=\left[\begin{matrix} z\\ \id_R \end{matrix}\right]: \dom(\beta)\rightarrow R.$$
Note that we may identity $\cM'$ with $\Hom_R(R, \cM')$ for any $R$-module $\cM'$.  Consider the $R$-module  map
$\gamma: \cod(\beta)\rightarrow R_\Sigma$ sending $g$ to $(\id_{R_\Sigma}\otimes_R g)(\id_{R_\Sigma}\otimes_R\beta)^{-1}(\id_{R_\Sigma}\otimes_R y)$.
Then $\im(\gamma)$ is a finitely generated $R$-submodule of $R_\Sigma$ containing $\cM+\im(\pi)$.

We claim that $\dim(\ker(\gamma)|\cod(\beta))=\rk(\theta)$.
Let $\cM^\sharp$ be a finitely generated $R$-submodule of $\ker(\gamma)$.
Say, $\cM^\sharp$ is generated by
$g_j\in \ker(\gamma)$ for $j=1, \dots, m$.  For each $1\le j\le m$, by Lemma~\ref{L-kernel} we have
$$\dim(\im(\left[\begin{matrix} \beta & y  \\ g_j & 0 \end{matrix}\right])|\cod(\beta)\oplus R)=\rk(\left[\begin{matrix} \beta & y  \\ g_j & 0 \end{matrix}\right])=\rk(\beta).$$
For any $1\le k<m$, note that
\begin{align*}
\dim((\sum_{j=1}^k\im(\left[\begin{matrix} \beta & y  \\ g_j & 0 \end{matrix}\right]))\cap \im(\left[\begin{matrix} \beta & y  \\ g_{k+1} & 0 \end{matrix}\right])|\cod(\beta)\oplus R)&\ge \dim(\im(\left[\begin{matrix} \beta & y  \end{matrix}\right])|\cod(\beta)\oplus R)\\
&=\rk(\left[\begin{matrix} \beta & y  \end{matrix}\right])\ge \rk(\beta).
\end{align*}
Then from Proposition~\ref{P-sum for relative} we get
$$ \dim(\sum_{j=1}^{k+1}\im(\left[\begin{matrix} \beta & y  \\ g_j & 0 \end{matrix}\right])|\cod(\beta)\oplus R)\le \dim(\sum_{j=1}^k\im(\left[\begin{matrix} \beta & y  \\ g_j & 0 \end{matrix}\right])|\cod(\beta)\oplus R).$$
It follows that
$$ \dim(\sum_{j=1}^m\im(\left[\begin{matrix} \beta & y  \\ g_j & 0 \end{matrix}\right])|\cod(\beta)\oplus R)\le \rk(\beta).$$
Denote by $\zeta$ the quotient map $\cod(\beta)\oplus R\rightarrow (\cod(\beta)\oplus R)/\cM^\sharp$, and by $\eta$ the projection $\cod(\beta)\oplus R\rightarrow R$. Then $\eta$ factors through $\zeta$. Consider $(1, 1)\in R\oplus R\subseteq \cod(\theta)\oplus R\oplus R=\cod(\beta)\oplus R$. Note that
$\sum_{j=1}^m\im(\left[\begin{matrix} \beta & y  \\ g_j & 0 \end{matrix}\right])\supseteq \cM^\sharp+R(1, 1)$.
Now we get
\begin{align*}
\rk(\theta)+1 &=\rk(\beta)\\
&\ge \dim(\sum_{j=1}^m\im(\left[\begin{matrix} \beta & y  \\ g_j & 0 \end{matrix}\right])|\cod(\beta)\oplus R)\\
&\ge \dim(\cM^\sharp+R(1, 1)|\cod(\beta)\oplus R)\\
&=\dim(\cM^\sharp|\cod(\beta)\oplus R)+\dim((R(1, 1))\zeta|\im(\zeta))\\
&\ge \dim(\cM^\sharp|\cod(\beta)\oplus R)+\dim((R(1, 1))\eta|\im(\eta))\\
&=\dim(\cM^\sharp|\cod(\beta))+\dim(R|R)\\
&=\dim(\cM^\sharp|\cod(\beta))+1,
\end{align*}
where in the second equality we apply Theorem~\ref{T-additivity} and in the last inequality we apply Proposition~\ref{P-hom for relative}. Therefore $\dim(\cM^\sharp|\cod(\beta))\le \rk(\theta)$.
Taking supremum over $\cM^\sharp$ we get $\dim(\ker(\gamma)|\cod(\beta))\le \rk(\theta)$.
Consider the map $\left[\begin{matrix} \theta & -z  \end{matrix}\right]: \dom(\theta)\rightarrow \cod(\theta)\oplus R=\cod(\beta)$.
Clearly $\left[\begin{matrix} \theta & -z  \end{matrix}\right]\gamma=0$, and hence $\im(\left[\begin{matrix} \theta & -z  \end{matrix}\right])\subseteq \ker(\gamma)$.
Thus
$$ \dim(\ker(\gamma)|\cod(\beta))\ge \dim(\im(\left[\begin{matrix} \theta & -z  \end{matrix}\right])|\cod(\beta))=\rk(\left[\begin{matrix} \theta & -z  \end{matrix}\right])\ge \rk(\theta).$$
This proves our claim.

Now we have
\begin{align*}
\dim(\ker(\gamma)+R|\cod(\beta))&\ge \dim(\im(\left[\begin{matrix} \theta & -z  \end{matrix}\right])+R|\cod(\beta))\\
&=\dim(\im(\left[\begin{matrix} \theta & -z  \\ & \id_R\end{matrix}\right])|\cod(\beta))\\
&=\rk(\left[\begin{matrix} \theta & -z  \\ & \id_R\end{matrix}\right])=\rk(\theta)+1.
\end{align*}
By Theorem~\ref{T-additivity} we have
\begin{align*}
\dim(\im(\pi)|\cM+\im(\pi))&\ge \dim(\im(\pi)|\im(\gamma))\\
&=\dim(\ker(\gamma)+R|\cod(\beta))-\dim(\ker(\gamma)|\cod(\beta))\\
&\ge (\rk(\theta)+1)-\rk(\theta)=1.
\end{align*}
Taking infimum over $\cM$, we obtain $\rk(\pi)=\dim(\im(\pi)|R_\Sigma)\ge 1$.

We also have
\begin{align*}
\dim(\cM|R_\Sigma)&\le \dim(\im(\gamma)|R_\Sigma)\le \dim(\im(\gamma))\\
&=\dim(\cod(\beta))-\dim(\ker(\gamma)|\cod(\beta))\\
&=\rk(\beta)-\rk(\theta)=1.
\end{align*}
Taking supremum over $\cM$, we obtain $\rk(\id_{R_\Sigma})=\dim(R_\Sigma)\le 1$ as desired.
\end{proof}



\begin{thebibliography}{99}
\Small

\bibitem{AC}
P.~Ara and J.~Claramunt. Uniqueness of the von Neumann continuous factor. {\it Canad. J. Math.} {\bf 70} (2018), no. 5, 961--982.

\bibitem{AOP}
P.~Ara, K.~C.~O'Meara, and  F.~Perera. Stable finiteness of group rings in arbitrary characteristic. {\it Adv. Math.} {\bf 170} (2002), no. 2, 224--238.

\bibitem{Bartholdi}
L.~Bartholdi. Amenability of groups is characterized by Myhill's Theorem, with an appendix by Dawid Kielak. {\it J. Eur. Math. Soc.} {\bf 21} (2019), no. 10, 3191--3197.

\bibitem{Bergman74}
G.~M.~Bergman. Coproducts and some universal ring constructions. {\it Trans. Amer. Math. Soc.} {\bf 200} (1974), 33--88.

\bibitem{BH}
B.~Blackadar and D.~Handelman. Dimension functions and traces on $C^*$-algebras. {\it J. Funct. Anal.} {\bf 45} (1982), no. 3, 297--340.

\bibitem{BPT}
N.~P.~Brown, F.~Perera, and A.~S.~Toms. The Cuntz semigroup, the Elliott conjecture, and dimension functions on $C^*$-algebras. {\it J. Reine Angew. Math.} {\bf 621} (2008), 191--211.

\bibitem{Cohn71}
P.~M.~Cohn. {\it Free Rings and their Relations}. London Mathematical Society Monographs, No. 2. Academic Press, London-New York, 1971.

\bibitem{Cuntz}
J.~Cuntz. Dimension functions on simple $C^*$-algebras. {\it Math. Ann.} {\bf 233} (1978), no. 2, 145--153.

\bibitem{Elek16}
G.~Elek. Lamplighter groups and von Neumann's continuous regular ring. {\it Proc. Amer. Math. Soc.} {\bf 144} (2016), no. 7, 2871--2883.

\bibitem{Elek17}
G.~Elek. Infinite dimensional representations of finite dimensional algebras and amenability. {\it Math. Ann.} {\bf 369} (2017), no. 1-2, 397--439.




\bibitem{ES04}
G.~Elek and E.~Szab\'{o}.  Sofic groups and direct finiteness. {\it J. Algebra} {\bf 280} (2004), no. 2, 426--434.


\bibitem{Goodearl}
K.~R.~Goodearl. {\it Von Neumann Regular Rings}. Second edition. Robert E. Krieger Publishing Co., Inc., Malabar, FL, 1991.


\bibitem{Gromov}
M.~Gromov. Endomorphisms of symbolic algebraic varieties. {\it J. Eur.\ Math.\ Soc.} {\bf 1} (1999), no. 2, 109--197.

\bibitem{JZ17}
A.~Jaikin-Zapirain. $L^2$-Betti numbers and their analogues in positive characteristic. In: {\it Groups St Andrews 2017 in Birmingham}, pp 346--405, London Math. Soc. Lecture Note Ser., 455, Cambridge Univ. Press, Cambridge, 2019.

\bibitem{JZ19}
A.~Jaikin-Zapirain. The base change in the Atiyah and the L\"{u}ck approximation conjectures. {\it Geom. Funct. Anal.} {\bf 29} (2019), no. 2, 464--538.




\bibitem{Lam}
T.~Y.~Lam. {\it Lectures on Modules and Rings}. Graduate Texts in Mathematics, 189. Springer-Verlag, New York, 1999.

\bibitem{Lam01}
T.~Y.~Lam. {\it A First Course in Noncommutative Rings}. Second edition. Graduate Texts in Mathematics, 131. Springer-Verlag, New York, 2001.

\bibitem{LL19}
H.~Li and B.~Liang. Sofic mean length. {\it Adv. Math.} {\bf 353} (2019), 802--858.

\bibitem{Liang}
B.~Liang.  Entropy on modules over the group ring of a sofic group.  {\it Proc. Amer. Math. Soc.} {\bf 147} (2019), no. 2, 727-734.

\bibitem{Liang17}
B.~Liang. Personal communication, 2016.



\bibitem{Luck02}
W. L\"{u}ck. {\it $L^2$-Invariants: Theory and Applications to Geometry and $K$-theory}.
Springer-Verlag, Berlin, 2002.



\bibitem{Malcolmson80}
P.~Malcolmson. Determining homomorphisms to skew fields. {\it J. Algebra} {\bf 64} (1980), no. 2, 399--413.

\bibitem{Malcolmson82}
P.~Malcolmson. Construction of universal matrix localizations. In: {\it Advances in Non-Commutative Ring Theory}, pp. 117--131. Lecture Notes in Mathematics 951. Springer, 1982.








\bibitem{NR}
D.~G.~Northcott and M.~Reufel. A generalization of the concept of length. {\it Quart. J. Math. Oxford Ser. (2)} {\bf 16} (1965), 297--321.


\bibitem{Passman}
D.~S.~Passman. {\it The Algebraic Structure of Group Rings}.
Pure and Applied Mathematics. Wiley-Interscience [John Wiley \& Sons], New York-London-Sydney, 1977.


\bibitem{Pestov}
V.~G.~Pestov. Hyperlinear and sofic groups: a brief guide. {\it Bull. Symbolic Logic} {\bf 14} (2008), no. 4, 449--480.


\bibitem{Rotman}
J.~J.~Rotman. {\it Advanced Modern Algebra}. Part 1. Third edition. Graduate Studies in Mathematics, 165. American Mathematical Society, Providence, RI, 2015.


\bibitem{Schofield}
A.~H.~Schofield. {\it Representation of Rings over Skew Fields}. London Mathematical Society Lecture Note Series, 92. Cambridge University Press, Cambridge, 1985.


\bibitem{Stenstrom}
B.~Stenstr\"{o}m. {\it Rings of Quotients}.
Die Grundlehren der Mathematischen Wissenschaften, Band 217. An introduction to methods of ring theory. Springer-Verlag, New York-Heidelberg, 1975.


\bibitem{Virili17}
S.~Virili. On the relation between length functions and exact Sylvester rank functions. {\it Topol. Algebra Appl.} {\bf 7} (2019), no. 1, 69--74.

\bibitem{Weiss}
B.~Weiss. Sofic groups and dynamical systems. In: {\it Ergodic Theory and Harmonic Analysis (Mumbai, 1999)}.
{\it Sankhy\={a} Ser. A} {\bf 62} (2000), no. 3, 350--359.

\bibitem{Winter}
W.~Winter. Structure of nuclear C*-algebras: from quasidiagonality to classification and back again.
In: {\it Proceedings of the International Congress of Mathematicians—Rio de Janeiro 2018. Vol. III. Invited Lectures}, pp 1801--1823, World Sci. Publ., Hackensack, NJ, 2018.


\end{thebibliography}
\end{document}